\documentclass[11pt]{article}
\usepackage{tocbibind} 
\usepackage{amsmath,amsthm,amssymb}
\usepackage{fancyhdr}
\usepackage[british]{babel}
\usepackage{geometry,mathtools}
\usepackage{enumitem}
\usepackage{algpseudocode}
\usepackage{dsfont}
\usepackage{centernot}
\usepackage{xstring}
\usepackage{colortbl}

\usepackage[section]{algorithm}
\usepackage{graphicx}
\usepackage[font={footnotesize}]{caption}
\usepackage[usenames,dvipsnames,table]{xcolor}
\usepackage{pstricks,tikz}
\usetikzlibrary{patterns,calc,fadings,decorations.pathreplacing}
\usepackage[h]{esvect}
\usepackage[
bookmarksopen=true,
bookmarksopenlevel=1,
colorlinks=true,
linkcolor=darkblue,
linktoc=page,
citecolor=darkblue,
urlcolor=darkblue
]{hyperref}
\usepackage{bbm}
\usepackage{cleveref}
\newcommand{\la}{\operatorname{la}}

\numberwithin{equation}{section}

\definecolor{darkblue}{rgb}{0,0,0.5}

\newdimen\margin
\def\textno#1&#2\par{
   \margin=\hsize
   \advance\margin by -4\parindent
          \setbox1=\hbox{\sl#1}
   \ifdim\wd1 < \margin
      $$\box1\eqno#2$$
   \else
      \bigbreak
      \hbox to \hsize{\indent$\vcenter{\advance\hsize by -3\parindent
      \it\noindent#1}\hfil#2$}
      \bigbreak
   \fi}

\newtheorem{theorem}[algorithm]{Theorem}
\newtheorem{prop}[algorithm]{Proposition}
\newtheorem{lemma}[algorithm]{Lemma}
\newtheorem{cor}[algorithm]{Corollary}

\theoremstyle{definition}

\newtheorem{defin}[algorithm]{Definition}
\newtheorem{remark}[algorithm]{Remark}

\newcounter{stepenv}
\newenvironment{stepenv}[1][]{\refstepcounter{stepenv}}{}

\newcounter{step}[stepenv]

\newcounter{substep}[step]
\renewcommand{\thesubstep}{\thestep.\arabic{substep}}

\newcounter{claim}
\newenvironment{claim}[1][]{\refstepcounter{claim}\par\medskip\noindent%
        \textit{Claim~\theclaim. #1} \itshape\rmfamily}{\medskip}

\newcommand{\cF}{\mathcal{F}}

\newcommand{\bE}{\mathbb{E}}

\newcommand{\bP}{\mathbb{P}}

\def\eps{{\varepsilon}}

\def\sm{\setminus}

\def\COMMENT#1{}
\def\TASK#1{}
\let\TASK=\footnote             
\let\COMMENT=\footnote          

\author{
Nemanja Dragani\'c\thanks{Bocconi University, Milan, Italy.  \\\emph{Email}: \textbf{nemanja.draganic@unibocconi.it}.
Research supported in part by the Javotte Bocconi Institute. Parts of this research were carried out while the author was at the University of Oxford and supported by SNSF grant 217926, and while the author at the EPF Lausanne and supported by SNSF grant P5R5-2\_239120.}
\and
Michael Krivelevich\thanks{
School of Mathematical Sciences, Tel Aviv University, Tel Aviv 6997801, Israel. \\
\emph{Email}: \textbf{krivelev@tauex.tau.ac.il}.
Research supported in part by NSF-BSF grant 2023688. 
}}
\title{Efficient Hamilton covers and linear arboricity of random graphs}
\begin{document} 
\maketitle
\begin{abstract}
A Hamilton cover of a graph is a collection of Hamilton cycles whose union
contains all edges. Since each Hamilton cycle covers two edges at every vertex,
every Hamilton cover has size at least \(\lceil \Delta(G)/2\rceil\). We prove
that this lower bound is tight for binomial random graphs $G(n,p)$
throughout the widest possible range of edge probabilities: if
\(\omega(n)\to\infty\) and
\[
  \frac{\log n+\log\log n+\omega(n)}{n}
  \le p=p(n) \le
  1-\frac{\omega(n)}{n^{2}},
\]
then 
$G\sim G(n,p)$ with high probability has a Hamilton cover of size
$
  \left\lceil \frac{\Delta(G)}{2}\right\rceil.
$
The main new contribution is the sparse regime near the Hamiltonicity
threshold, where we prove a conjecture of Dragani\'c, Glock, Munh\'a Correia and Sudakov. Our proof develops constructive tools for decomposing such graphs
into controlled forest systems and extending them, using reserved pseudorandom
structure, into Hamilton cycles.

We also prove the corresponding hitting-time result for the random graph process, answering a question of Hefetz, K\"uhn,
Lapinskas and Osthus.
Finally, we use our methods to show that $G\sim G(n,p)$ with high probability satisfies the celebrated Linear arboricity conjecture for every $p\leq 1$.
\end{abstract}

 \section{Introduction}

  A \emph{Hamilton cover} of a graph \(G\) is a collection of Hamilton cycles
  whose union contains all edges of \(G\). Every Hamilton cover has size at
  least
  $
  \left\lceil \frac{\Delta(G)}{2}\right\rceil ,
  $
  since a Hamilton cycle uses exactly two edges at each vertex. We call a
  Hamilton cover \emph{tight} if it has this minimum possible size. The
  question is therefore whether this local maximum-degree obstruction is the
  only obstruction.

 Hamiltonicity is a central theme in graph theory and theoretical computer science, with wide-ranging applications. Sparse graph models have received particular attention, especially sparse random graphs.
  P\'osa~\cite{posa1976hamiltonian} proved that
  the binomial random graph \(G(n,p)\) is with high probability (whp) Hamiltonian for
  \(p\ge C\log n/n\), where \(C\) is a sufficiently large constant. The
  sharp threshold, due to Koml\'os--Szemer\'edi~\cite{KS:83} and to
  Bollob\'as~\cite{bollobas1984evolution}, is
  \[
  p=\frac{\log n+\log\log n+\omega(n)}{n}
  \]
  for every \(\omega(n)\to\infty\); see, e.g.,~\cite{bollobas1998random}. The
  corresponding hitting-time theorem, due independently to Ajtai, Koml\'os and
  Szemer\'edi~\cite{AKS:85} and to Bollob\'as~\cite{bollobas1984evolution}, says
  that in the random graph process
  \(\widetilde G=(G_i)_{i=0}^{\binom{n}{2}}\), whp the first Hamilton cycle appears
  exactly when the minimum degree \(\delta(G_i)\) becomes two.

  Once Hamilton cycles appear, there are two natural robustness questions. The
  packing problem asks how many edge-disjoint Hamilton cycles one can find, and
  is governed by the minimum degree. The trivial upper bound
  \(\lfloor \delta(G)/2\rfloor\) is whp tight in \(G(n,p)\), through a sequence
  of works beginning with Bollob\'as and Frieze~\cite{bollobas1983matchings}
  and including Frieze and Krivelevich~\cite{FK:08}, Krivelevich and
  Samotij~\cite{krivelevich2012optimal}, Knox, K\"uhn and
  Osthus~\cite{knox2015edge}, and Hamilton decomposition results for regular
  expanders~\cite{kuhn2014hamilton}. The covering problem is the dual question:
  it asks how many Hamilton cycles are needed to cover all edges, and is
  governed by the maximum degree.

  Hamilton covers in random graphs were first studied systematically by Glebov,
  Krivelevich and Szab\'o~\cite{GKS:14}. They proved that for every fixed
  \(\alpha>0\), \(G(n,p)\) with \(p\ge n^{-1+\alpha}\) has a Hamilton cover of
  size \((1+o(1))np/2\) whp, and conjectured that the optimal number
  \(\lceil\Delta(G(n,p))/2\rceil\) should be attainable essentially throughout
  the Hamiltonicity range. Hefetz, K\"uhn, Lapinskas and
  Osthus~\cite{HKLO:14} proved the exact tight result for
  \[
  \frac{\log^{117} n}{n}\le p\le 1-n^{-1/8},
  \]
  already pushing the cover theorem close to the complete graph in the upper
  bound for \(p(n)\). Dragani\'c, Glock, Munh\'a Correia and
  Sudakov~\cite{DGMS:25} later lowered the sparse end to \(C_0\log n/n\), for a
  sufficiently large constant \(C_0\).

  In this paper we close the remaining gaps. In \Cref{sec:sparse-range} we prove the tight bound for the range starting at the sharp threshold, confirming the conjecture by Dragani\'c, Glock, Munh\'a Correia and
  Sudakov~\cite{DGMS:25}. In
  \Cref{sec:very-dense} we complete the remaining dense range up to the natural
  endpoint. Consequently we obtain the following ultimate form of the random graph Hamilton
  cover theorem.

\begin{theorem}\label{thm:main}
Let \(\omega(n)\to\infty\), and let \(p=p(n)\) satisfy
\[
  \frac{\log n+\log\log n+\omega(n)}{n}
  \le p\le
  1-\frac{\omega(n)}{n^{2}}.
\]
Then whp \(G\sim G(n,p)\) has a tight Hamilton cover.
\end{theorem}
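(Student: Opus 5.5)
The proof splits into three ranges of \(p\), handled separately. The middle range \(C_0\log n/n\le p\le 1-n^{-1/8}\) is already covered by the results of Hefetz, K\"uhn, Lapinskas and Osthus~\cite{HKLO:14} and of Dragani\'c, Glock, Munh\'a Correia and Sudakov~\cite{DGMS:25}, so nothing new is needed there. This leaves two new regimes: the sparse range \(\frac{\log n+\log\log n+\omega(n)}{n}\le p\le C_0\log n/n\) (treated in \Cref{sec:sparse-range}) and the very dense range \(1-n^{-1/8}\le p\le 1-\omega(n)/n^2\) (treated in \Cref{sec:very-dense}). Since each regime holds whp and the three ranges cover the whole interval, \Cref{thm:main} follows by taking the appropriate case for each \(p=p(n)\). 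The sparse regime is the main obstacle.

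\emph{Sparse regime.} Set \(k=\lceil\Delta(G)/2\rceil\); whp \(k=\Theta(\log n)\).
\begin{enumerate}
\item Reserve a random sparse subgraph \(R\subseteq G\) that retains pseudorandom expansion properties. Also record the set \(\mathrm{SMALL}\) of low-degree vertices; whp these are far apart from one another and each has degree at least \(2\).
\item Decompose \(G\setminus R\) into \(k\) linear forests \(F_1,\dots,F_k\) (path systems). This requires a linear arboricity statement for these graphs, obtained by splitting edges randomly into \(k\) classes and locally correcting.
\item Impose two further requirements on the forests: each \(F_i\) has few paths, and at every vertex \(F_i\) uses few edges in a controlled way.
\item Ensure that each small-degree vertex has both of its edges in each \(F_i\) from a fixed pair. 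These edges may be repeated across forests, which is allowed because we are covering rather than packing.
\item For each \(i\), extend \(F_i\) to a Hamilton cycle using edges of \(R\), via P\'osa rotation--extension or absorbing-type connections in the expander \(R\).
\end{enumerate}

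The repeated use of \(R\) across all \(k\) extensions is the delicate point. I would split \(R\) into \(k\) nearly independent pieces \(R_i\), or use the boosting/booster property that expansion survives removal of the few edges already used. The edges of \(R\) themselves must also be covered. To handle this, put them into the forests via a second round: choose \(R\) so sparse that its edges can be absorbed into the spare capacity of the \(F_i\), since vertices of degree below \(\Delta\) have slack.

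Making the forest decomposition compatible with vertices of degree near \(\Delta\) is subtle. At such vertices every \(F_i\) must use exactly two edges, leaving no slack. I would handle them by showing whp they are few and spread out, and prescribing their edges first.

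\emph{Dense regime.} Pass to the complement \(H=\overline G\). Whp \(\Delta(H)\) is small, at most about \(n^{7/8}\), and \(H\) may have only a bounded number of edges when \(1-p\approx\omega/n^2\). We need \(\lceil\Delta(G)/2\rceil\) Hamilton cycles of \(K_n\setminus H\) covering all edges. Start from a Hamilton decomposition of \(K_n\) (or \(K_n\) minus a perfect matching) that is chosen randomly and twisted so that each cycle meets \(H\) in a sparse set. Each cycle is then repaired by local switchings avoiding \(H\), with the destroyed edges recovered by \(O(1)\) extra switches. This uses up the \(\lceil\Delta(G)/2\rceil-\lfloor(n-1)/2\rfloor+\ldots\) slack: the number of cycles allowed exceeds \((n-1-\Delta(H)_{\min})/2\) enough only when a min-degree vertex of \(H\) exists. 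Parity cases (\(n\) even versus odd, and whether \(\Delta(G)\) is attained by a vertex with \(\deg_H=0\)) need separate bookkeeping, and when \(H\) has isolated vertices we need essentially a Hamilton decomposition of \(G\) plus a cycle. I expect this to go via known results on Hamilton decompositions of dense regular-ish graphs such as K\"uhn--Osthus~\cite{kuhn2014hamilton}.
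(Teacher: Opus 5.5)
Your split into three ranges matches the paper, including quoting the middle range $C_0\log n/n\le p\le 1-n^{-1/8}$ from \cite{DGMS:25} and \cite{HKLO:14}. However, neither of the two new regimes is actually settled by your sketch.

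\textbf{Sparse regime.} The crux is your step~2, obtaining exactly $k=\lceil\Delta(G)/2\rceil$ linear forests, and ``split randomly into $k$ classes and correct locally'' is not an argument. Each vertex receives a Poisson-like number of edges of constant mean in each class, so a constant fraction of the $nk$ vertex--class pairs have degree at least $3$. At a vertex of degree $\Delta$ there is no slack at all. The missing idea is to create slack globally, in three moves.
\begin{itemize}
\item Remove $W=B\cup S\cup N(S)$. Then $G-W$ has maximum degree at most $(1-\alpha)\Delta(G)$, and Alon's theorem covers it by only about $(1-\alpha/2)k$ forests.
\item Cover the edges at $B$ by cherries, and those at $S\cup N(S)$ by explicit gadgets whose endpoints lie in $N^2(S)$ and which use each $N(S)$--$N^2(S)$ edge boundedly often. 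Your step~4 overlooks that a neighbour of a degree-$2$ vertex must use its edge to that vertex in every one of the $k$ cycles, so $N(S)$ is as constrained as $S$.
\item Use the short-path and short-cycle exclusions around $B\cup S$ to show that the edges discarded when the two families are paired form a bounded-degree graph. \Cref{lem:merginglinfor} then absorbs this graph into the $\Omega(\alpha k)$ unpaired exceptional forests.
\end{itemize}

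Your extension step also targets the wrong difficulty. Reusing edges is harmless in a cover, since any edge of $G$ used for connecting is covered anyway. So splitting $R$ into $k$ pieces is unnecessary, and it would be fatal: $k=\Theta(\log n)$ pieces of a graph of degree $O(\log n)$ have bounded degree. What extension needs is \emph{vertex} room: each forest must avoid a linear-size set in which its endpoints, and all vertices of that set, have many neighbours. A reserved edge set does not provide this. The paper obtains it from a random vertex partition $R'_1,\dots,R'_t$, with the $i$-th group of forests built only from edges avoiding $R'_i$.

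Moreover, vertices of degree only about $(\log\log n)^4$ lie outside $S$, so the degree into the reservoir can be as small as order $(\log\log n)^4$. This is why the paper connects the paths via Friedman--Pippenger extendability plus Hamilton-connectedness of expanders (\Cref{lem:linforestextension}), rather than by a generic rotation--extension appeal.

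\textbf{Dense regime.} Here your plan fails on counting. A Hamilton decomposition of $K_n$ (or of $K_n$ minus a perfect matching) has $\lfloor (n-1)/2\rfloor$ cycles. By contrast, $k=\lceil (n-1-\delta(H))/2\rceil$, where $\delta(H)$ is typically of order $qn$, i.e.\ up to about $n^{7/8}$. Local switchings do not change the number of cycles, so as soon as $\delta(H)\ge 3$ you end with roughly $\delta(H)/2$ cycles too many.

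The missing idea is to regularise $G$ itself:
\begin{itemize}
\item Find a multigraph $R$ of repeated edges of $G$ with $d_R(v)=2k-d_G(v)$ for all $v$. This is a perfect matching in an auxiliary stub graph, whose minimum degree exceeds half its order because $(\Delta(H)+1)(\Delta(H)-\delta(H)+1)$ is small compared with the total deficiency $\sum_v(2k-d_G(v))$.
\item Split $R$ into $o(n)$ matchings by Shannon's theorem.
\item Peel off one Hamilton cycle through each matching, using a P\'osa-type theorem on prescribed matchings.
\item Decompose the remaining even-regular simple graph, of degree $n-o(n)$, into Hamilton cycles by the K\"uhn--Osthus theorem.
\end{itemize}
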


  Specifically, the range from the threshold up to \(C_0\log n/n\) is the new sparse
  result proved in \Cref{sec:sparse-range}; the range from \(C_0\log n/n\) up
  to \(1-n^{-1/8}\) follows by combining~\cite{DGMS:25} with~\cite{HKLO:14};
  and the range \(p=1-q\), where \(q\le n^{-1/8}\) and \(qn^2\to\infty\), is
  covered by our \Cref{thm:very-dense-range}. Our proof is constructive, and yields a randomized polynomial time algorithm to find the required cover. In the extended version of our paper we comment on the algorithmic details more explicitly, as our proof is presented existentially for clarity of presentation. Furthermore, we comment on how the proof in~\cite{DGMS:25} and~\cite{HKLO:14} for
 the range from \(C_0\log n/n\) up
  to \(1-n^{-1/8}\) can be made algorithmic as well.
  
  The range of \(p\) in \Cref{thm:main} is best possible in the following
  sense. The lower endpoint is forced by Hamiltonicity itself: below
  \(p(n)=(\log n+\log\log n)/n\), the graph whp has a vertex of degree less than
  two, and hence has no Hamilton cycles, let alone a Hamilton cover. The upper endpoint is also the natural
  endpoint for a whp statement. If \(1-p=c/n^2\) for a fixed \(c>0\), then, for
  odd \(n\), with probability bounded away from zero the complement of
  \(G(n,p)\) consists of exactly one edge. But \(K_n\) with one edge removed has
  no tight Hamilton cover, as observed in~\cite{HKLO:14} and explained in
  \Cref{sec:very-dense}. On the other hand, if \(1-p\ll n^{-2}\), then
  \(G(n,p)=K_n\) whp, and Walecki's theorem (see, e.g.,~\cite{alspach2008walecki}) gives a tight Hamilton cover.
  Thus the only excluded part is the unavoidable critical window around
  \(1-p\asymp n^{-2}\).

  The new sparse ingredient is also robust enough to give the corresponding
  hitting-time statement in the window where the Hamiltonicity threshold occurs.
  This answers a question of Hefetz, K\"uhn, Lapinskas and Osthus~\cite{HKLO:14},
  who asked whether the tight-cover property appears at the hitting time of
  Hamiltonicity.

\begin{theorem}[Hitting time for tight Hamilton covers]
\label{thm:hitting-time-cover}
Let \(G_0\subseteq G_1\subseteq \cdots \subseteq G_{\binom n2}\) be the random
graph process on vertex set \([n]\), where \(G_i\) is obtained from
\(G_{i-1}\) by adding one edge chosen uniformly at random from the missing
edges. Let \(\tau_2=\min\{i:\delta(G_i)\ge 2\}\). Then whp \(G_{\tau_2}\) has a
tight Hamilton cover. 
\end{theorem}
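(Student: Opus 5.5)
We will reduce the statement to a deterministic theorem about graphs with a list of pseudorandom properties, and then check that $G_{\tau_2}$ has these properties whp. Whp $\tau_2$ lies in the window where $G_{\tau_2}$ has $m=\tfrac n2(\log n+\log\log n\pm\omega(n))$ edges. In this window $G_{\tau_2}$ is sandwiched between two samples of $G(n,m_1)$ and $G(n,m_2)$, with $m_{1,2}=\tfrac n2(\log n+\log\log n\mp\omega)$. The only difference from the binomial setting of \Cref{thm:main} is at the bottom: we may have vertices of degree exactly $2$ (and a few of degree $3$), rather than degree growing with $\omega$. The plan is therefore to structure the sparse-range argument of \Cref{sec:sparse-range} so that it only uses properties (P1)--(P$k$) of the following type, each of which holds whp for every graph in the window simultaneously.

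\emph{Properties of small-degree vertices.} The set $\mathrm{SMALL}$ of vertices of degree at most $\log n/100$ has size $n^{1-c}$, and any two such vertices are at distance at least, say, $5$ in $G$. In particular, each small vertex has all its neighbours of large degree.

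\emph{Expansion.} Small sets expand by a factor $\Omega(\log n)$, and there are edges between any two disjoint linear-size sets.

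\emph{Maximum degree.} $\Delta=(1+o(1))\log n/\log\log n\cdot$(appropriate) is concentrated, with few vertices of maximum degree.

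All of these are monotone or local and hold whp in $G(n,m)$ for all $m$ in the window, by a union bound over $m$ or via the standard coupling. The distance statement is the classical Bollob\'as argument used for hitting-time Hamiltonicity.

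Given these properties, we run the construction of the sparse section.

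First, we reserve a sparse random subgraph $R$ (the pseudorandom reserve) on the large-degree vertices, chosen so that it is still an expander and its removal keeps degrees of large vertices high.

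Second, we decompose $G\setminus R$ into $\lceil\Delta/2\rceil$ linear forests. Each forest has degree at most $2$ at every vertex, and at a vertex $v$ of degree $d(v)$ the forests jointly use all edges at $v$. This is a degree-constrained coloring problem; we would solve it by Vizing/Euler-tour splitting plus local corrections.

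Third, we extend each forest to a Hamilton cycle using $R$ and Pósa rotation--extension with boosters.

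The key new point at the hitting time is a vertex $v$ of degree $2$ or $3$. Every one of the $\lceil\Delta/2\rceil$ Hamilton cycles must pass through $v$ using two of its few edges. Consequently such edges are reused many times, and the forests must be allowed to share edges at small vertices.

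We handle this by treating the path $u v w$ (for two chosen neighbours $u,w$ of $v$) as a fixed segment prescribed in every cycle, which we contract into a single edge $uw$. For a degree-$3$ vertex, we use each of its edges in at least one cycle by alternating which pair of neighbours is chosen. Since small vertices are far apart, these prescribed segments are vertex-disjoint, and the neighbours $u,w$ are large-degree vertices. This costs those neighbours only $O(1)$ degree, which the expansion properties absorb.

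The main obstacle is the third step: making the extension step work for all $\lceil\Delta/2\rceil$ cycles simultaneously while the reserve $R$ is shared among them. Each cycle consumes edges of $R$, but we are allowed to reuse $R$-edges across cycles, since we only need a cover and not a packing. The real issue is that the forest edges must remain in the cycle after rotations. I would first try to keep each linear forest with $o(n)$ components of bounded total "fixed" structure. I would then perform rotations only at endpoints inside $R$, using that $R$ together with the fixed forest still has the required expansion after contracting forest paths, as in the DGMS booster argument.

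The secondary obstacle is ensuring that maximum-degree vertices receive exactly two distinct new edges in each cycle, and that parity works when $\Delta$ is odd. We handle this by letting one cycle reuse an edge at each such vertex, which is harmless for a cover.
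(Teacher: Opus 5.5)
Your opening move, sandwiching $G_{\tau_2}$ between $G_{m_-}$ and $G_{m_+}$, is right, and it is essentially all the paper does. The paper checks that the deterministic hypotheses of the proof of \Cref{thm:sparse-range} hold simultaneously for every $G_m$ in the window. Monotone properties are checked at the two endpoints, and the non-monotone exceptional sets are controlled through fixed supersets. A union bound over the $\Theta(n\omega)$ values of $m$ would not suffice for properties that fail with probability $n^{-c}$. The paper then reruns that construction verbatim. Degree-$2$ and degree-$3$ vertices are not new at the hitting time: they already occur in $G(n,p)$ near the threshold, and the gadgets for $S$ handle them.

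By rebuilding the construction instead, you introduce a step that fails as stated. This is decomposing $G\setminus R$ into exactly $\lceil\Delta/2\rceil$ linear forests by ``Vizing/Euler-tour splitting plus local corrections''. Euler splitting gives $\lceil\Delta/2\rceil$ subgraphs of maximum degree $2$, which may contain many cycles. Pairing the $\Delta+1$ Vizing matchings gives $\lceil(\Delta+1)/2\rceil$ classes, one too many for even $\Delta$, again with cycles. You give no way to break the cycles without creating extra classes. The statement is also false for general graphs (e.g.\ even-regular ones), so it must use the structure of $G$. The missing idea is to localise the tightness. The vertices of degree at least $(1-\alpha)\Delta$ form a small, well-separated set $B$, whose stars are covered by explicit cherries. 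Alon's approximate theorem is applied only to a random $t$-partition of $G-W$, whose maximum degree is below $(1-\alpha)\Delta$. This leaves $\Omega(\alpha\Delta)$ spare forests, which absorb the bounded-degree conflict graph via \Cref{lem:merginglinfor}.

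Second, an edge reserve does not fit a cover as you set it up. Edges of $R$ are excluded from the decomposition, and nothing guarantees each of them ends up in some Hamilton cycle. Meanwhile the forests decomposing $G\setminus R$ have no linear-size vertex set they avoid, which is what the extension step needs; you explicitly leave that step open. The paper resolves both issues with a vertex partition $R_1',\dots,R_t'$ in which colour $i$ is never placed on an edge touching $R_i'$. Every edge of $G-W$ then lies in some $G_i$, yet every forest indexed by $i$ avoids $R_i\setminus(N(B)\cup N^2(S))$. Here $|R_i|\ge n/(2t)$, and $R_i$ receives at least $d(v)/(200t)$ edges from each $v\notin W$, so \Cref{lem:linforestextension} applies to every forest.

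Finally, your degree-$2$ accounting is wrong. If $d(v)=2$ and $u\in N(v)$, then $uv$ lies in all $k$ cycles, so $u$ has only one free slot per cycle. Its $d(u)-1$ remaining edges must therefore fit into $k$ slots. This is not an $O(1)$ loss; it requires $d(u)\le k+1$. The paper obtains this from $d(u)\le 1.1\log n<1.2\log n\le k$ (\Cref{prop:bad vertices}, Item~3), together with an explicit cyclic assignment of the edges from $N(v)$ to $N^2(v)$. Also, $\Delta=\Theta(\log n)$ in this window, not of order $\log n/\log\log n$.
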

Since the proof consists of the same algorithm and analysis as in the proof of \Cref{thm:main}, we briefly comment on the details in the concluding remarks in \Cref{sec:concluding}. 

Hamilton covers are also natural from an algorithmic point of view. They form a
covering analogue of Hamilton decompositions and Hamilton-cycle packings: rather
than asking for one spanning cycle, or for many edge-disjoint spanning cycles,
one asks for the minimum number of spanning tours needed to cover the whole edge
set. Closely related Hamilton decomposition questions have appeared in the
algorithmic study of TSP tour domination~\cite{GutinYeo2001,
AlonGutinKrivelevich2004}, and Ferber and Mond~\cite{ferber-mond:25} recently
proved an algorithmic optimal packing theorem for edge-disjoint Hamilton cycles
in random directed graphs with degree at least $\log^{15}n$. Our result gives an algorithmic covering counterpart for
undirected random graphs. 

  Hamilton covers also fit into a broader line of work on pseudorandom graphs.
  The result of Glebov, Krivelevich and Szab\'o~\cite{GKS:14} applies to a broad
  class of expanders, and Hamilton decompositions of regular expanders were
  obtained by K\"uhn and Osthus~\cite{kuhn2014hamilton}. More recently,
  Dragani\'c, Kim, Lee, Munh\'a Correia, Pavez-Sign\'e and
  Sudakov~\cite{DKLMPSS:25} proved asymptotically optimal packing and covering
  results for sparse pseudorandom graphs: for every \(\varepsilon>0\) there is
  \(C>0\) such that every \((n,d,\lambda)\)-graph with \(d/\lambda\ge C\)
  contains at least \((1/2-\varepsilon)d\) edge-disjoint Hamilton cycles, and
  its entire edge set can be covered by at most \((1/2+\varepsilon)d\) Hamilton
  cycles. In the directed setting, Ferber, Kronenberg and
  Long~\cite{ferber2017packing} proved asymptotic packing, counting and
  covering results for random directed graphs.

  \subsection*{Linear arboricity}

  A closely related decomposition problem is the linear arboricity conjecture of
  Akiyama, Exoo and Harary~\cite{linarbconj}. A linear forest is a graph whose
  connected components are paths. The \emph{linear arboricity} \(\la(G)\) of a
  graph \(G\) is the smallest number of linear forests whose union covers
  \(E(G)\), or equivalently, the smallest number of linear forests its edges can be partitioned into.
  The conjecture asserts that
  \[
  \la(G)\le \left\lceil \frac{\Delta(G)+1}{2}\right\rceil
  \]
  for every graph \(G\). This would be best possible, for example for regular
  graphs of odd degree, and can be viewed as a path-like strengthening of the
  basic degree barrier in edge decompositions. The conjecture remains open in
  general. Alon~\cite{alon1988linear} proved the asymptotic form
  \(\la(G)\le (1/2+o(1))\Delta(G)\); later work of Ferber, Fox and
  Jain~\cite{FFJ:20} and Lang and Postle~\cite{LangPostle} improved the error
  term, which is now of order $\Tilde{O}(\sqrt{\Delta(G)})$. Very recently, Christoph, Dragani\'c, Gir\~ao, Hurley, Michel and
  M\"uyesser~\cite{CDGHMM:25} proved that every \(n\)-vertex graph satisfies
  \(\la(G)\le \Delta(G)/2+O(\log n)\), improving the bound for graphs with at least polylogarithmic degree. For random graphs, McDiarmid and
  Reed~\cite{arboricity-random-regular} proved the conjecture for random
  regular graphs, and Glock, K\"uhn and Osthus~\cite{GKO:16} proved it for a
  large range of binomial random graphs using the Hamilton-cover result of
  Hefetz, K\"uhn, Lapinskas and Osthus~\cite{HKLO:14}, and the range was later extended by the Hamilton cover result in~\cite{DGMS:25}. Our Hamilton-cover theorem, together
  with the additional argument for sparse graphs in \Cref{sec:linear-arboricity},
  yields the full binomial random graph result.

  \begin{theorem}\label{thm:random-linear-arboricity}
  For every \(p=p(n)\), whp \(G\sim G(n,p)\) satisfies
  \[
  \la(G)\leq \left\lceil\frac{\Delta(G)+1}{2}\right\rceil .
  \]
  \end{theorem}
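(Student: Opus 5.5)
We prove the theorem by splitting the range of \(p\) into a sparse regime and a Hamiltonian regime.

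\emph{Hamiltonian regime.} Suppose \(p\ge (\log n+\log\log n+\omega(n))/n\) and \(p\le 1-\omega(n)/n^2\). Then \Cref{thm:main} gives whp a tight Hamilton cover \(H_1,\dots,H_k\) with \(k=\lceil\Delta(G)/2\rceil\). Deleting one edge from each \(H_i\) turns it into a Hamilton path, which is a linear forest. We remove covered edges greedily so the paths become disjoint; subgraphs of linear forests are linear forests. This gives \(\la(G)\le\lceil\Delta/2\rceil\le\lceil(\Delta+1)/2\rceil\).

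The excluded window \(1-p\le \omega/n^2\) is handled directly. Choosing \(\omega\) slowly growing, the complement \(\overline G\) has whp at most \(\omega'\) edges and forms a matching. Take a Walecki path decomposition of \(K_n\). For even \(n\) it is a decomposition into \(n/2\) Hamilton paths; for odd \(n\) it is a decomposition into \((n-1)/2\) Hamilton cycles, each of which becomes a path after deleting one edge, with the deleted edges forming one extra linear forest. Removing the edges of \(\overline G\) keeps every class a linear forest, giving \(\la(G)\le\lceil (n-1+1)/2\rceil=\lceil(\Delta+1)/2\rceil\) when \(\Delta=n-1\). If \(\Delta\le n-2\), every vertex misses a complement edge, so \(\overline G\) contains a perfect or near-perfect matching; this has probability \(o(1)\) for \(1-p\le\omega/n^2\) with \(\omega\) small. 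Values of \(p\) near \(1\) with \(1-p\ge \omega/n^2\) are already in the main theorem's range.

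\emph{Sparse regime.} Here \(p\le(\log n+\log\log n+\omega)/n\), and we use a standard two-case treatment. If \(p\le \log n/(100 n)\) or smaller, \(G\) is whp very sparse and locally tree-like: each connected set of size \(s\le \log n\) spans at most \(s\) edges, and vertices of degree \(\ge\Delta/10\) are far apart. We then apply the known results for graphs with sparse local structure. Linear arboricity is at most \(\lceil(\Delta+1)/2\rceil\) for forests and for graphs of bounded maximum average degree in each small piece, and for \(\Delta\le 2\) it is trivial. We decompose \(G\) into a core of high-degree vertices, at pairwise distance at least \(3\), plus a sparse remainder, and combine path forests. A cleaner route is to orient \(G\) and use the fact that \(\la\le\lceil(\Delta+1)/2\rceil\) holds when the graph has a 1-factor-like orientation, via Alon's coloring argument, on the sparse pieces.

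\emph{Near-threshold case (main obstacle).} The intermediate near-threshold case \(p=\Theta(\log n/n)\) is the main obstacle, since \(\delta(G)\) may be \(0\) or \(1\). We add a small set of auxiliary edges, or use \Cref{thm:hitting-time-cover} via coupling. Run the random process to \(\tau_2\) and couple \(G\subseteq G'\) with \(G'\) having a tight Hamilton cover of size \(\lceil\Delta(G')/2\rceil\). We need \(\Delta(G')\le\Delta(G)+1\), so that the deleted-edge argument gives \(\lceil(\Delta(G')+1)/2\rceil\)-ish. The delicate point is guaranteeing \(\Delta(G')=\Delta(G)\) whp. We handle it by showing that the added edges \(G'\setminus G\) whp avoid all vertices of near-maximum degree. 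This holds because there are few such vertices and few added edges: \(O(\log\log n)\) edges in the window, incident to low-degree vertices.

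When \(\Delta(G)\) is odd, or when \(G'\) gains a degree, we use the slack \(+1\) in \(\lceil(\Delta+1)/2\rceil\) to put leftover edges into one extra matching-type linear forest.
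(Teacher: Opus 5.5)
There is a genuine gap below the Hamiltonicity threshold, and the coupling step fails quantitatively. To go from $G(n,p)$ to $G_{\tau_2}$ you must add about $\frac n2(\log n+\log\log n)-\binom n2 p$ uniformly random edges:
\begin{itemize}
\item $\Theta(n\log n)$ of them from $p=\log n/(100n)$;
\item $\Theta(n\log\log n)$ from $p=\log n/n$;
\item still $\Theta(n)$ even when $np=\log n+\log\log n-O(1)$.
\end{itemize}
This is not $O(\log\log n)$, and the new edges are not attached to low-degree vertices. Every vertex, including a maximum-degree vertex of $G$, receives on average $\Theta(\log n)$, $\Theta(\log\log n)$, respectively $\Theta(1)$ of them. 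So $\Delta(G')\le\Delta(G)+1$ fails whp in the first two cases and with probability bounded away from zero in the last, and $\lceil\Delta(G')/2\rceil$ cycles are too many. Adding auxiliary edges by hand does not rescue this either, since Theorem~\ref{thm:main} applies only to $G(n,p)$ itself. Your fallback of spending the ``$+1$'' on an extra forest is also unavailable whenever $\Delta$ is odd, because then $\lceil(\Delta+1)/2\rceil=\lceil\Delta/2\rceil$. Finally, the case $p\le\log n/(100n)$ is not actually argued. It contains $p=c/n$ with $c>1$, where the giant component has many cycles, and the appeals to ``sparse local structure'' and a ``1-factor-like orientation'' do not name any theorem whose hypotheses you verify.

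The missing idea is that for $p\le 2\log n/n$ you should not pass through Hamilton cycles at all. Linear forests need not be spanning, so low-degree vertices are harmless, and only vertices of near-maximum degree need exact control. The paper proceeds as follows.
\begin{itemize}
\item Let $B$ be the vertices of degree at least $(1-\alpha)\Delta$. Whp $B$ is independent, its stars are vertex-disjoint, $N(B)$ is independent, and every vertex outside $B$ has at most one neighbour in $N(B)$.
\item Cover the edges at $B$ by $k=\lceil\Delta/2\rceil$ cherry forests $F_1^B,\dots,F_k^B$, each edge used at most twice.
\item Decompose the rest $G_0$, which has $\Delta(G_0)\le(1-\alpha)\Delta$, into $m\le(1-\alpha/3)k$ linear forests $A_i$ by Alon's theorem.
\item Merge $A_i$ with $F_i^B$ after deleting the edges $Q_i$ of $A_i$ at endpoints of $F_i^B$, so that $\Delta(Q)\le 4$.
\item Absorb $Q$ into the $k-m\ge\alpha k/3$ unused cherry forests via Lemma~\ref{lem:merginglinfor} with $d=4$, $\ell=2$.
\end{itemize}
This covers all $c/n\le p\le 2\log n/n$. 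For $p=o(1/n)$ the graph is whp a forest, and pairing up the classes of a $\Delta$-edge-colouring suffices.

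A smaller point concerns your Hamiltonian regime. Deleting an arbitrary edge from each $H_i$ can leave that edge uncovered: at a vertex of degree exactly $2k$, every incident edge lies in exactly one cycle. For odd $\Delta$ there is no spare forest, so you need each cycle to contain an edge covered by another cycle, and nothing guarantees this for an arbitrary tight cover. The paper uses the cover only for $p\ge 2\log n/n$ and relies on the deduction in~\cite{GKO:16} there.
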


In fact, one can see from the proof that a stronger bound of $\lceil \Delta(G)/2\rceil$ holds for every $p$ with $1-p=\omega(1/n^2)$.

\subsection{Proof outline for the sparse range}
We finish the introduction by outlining the new sparse-range argument in some
more detail. The difficulty in this regime is that the graph is already
Hamiltonian, but it is still far from regular in the sense relevant to a tight
cover. Whp one has \(\Delta(G)-\delta(G)=\Theta(\Delta(G))=\Theta(\log n)\).
A tight cover contains only \(k:=\lceil\Delta(G)/2\rceil\) Hamilton cycles, so
at a vertex of degree close to \(\Delta(G)\) almost every available incident edge
must be used without repetition, while at a vertex of very small degree the same few
incident edges must be reused many times by the \(k\) cycles. Thus the
cycles have to be routed through the exceptional vertices in a very controlled way.

The first step is to isolate the exceptional vertices. We fix a small constant
\(\alpha>0\), depending only on the constant \(C\) in the upper bound
\(p\le C\log n/n\). Let \(B\) be the set of vertices of degree at least
\((1-\alpha)\Delta(G)\), let \(S\) be the set of vertices of degree less than
\((\log\log n)^4\), and put \(W=B\cup S\cup N(S)\). Standard first-moment
estimates show that these sets are typically small and well separated: there are no short
paths between exceptional vertices, and no short cycles in their
neighbourhoods. This separation is a useful feature of the random graph near
the threshold. It allows us to deal with the high-degree and low-degree
constraints locally, while leaving the typical part of the graph available for
global routing.

We then cover the typical part \(G-W\) by linear forests, using the approach in~\cite{DGMS:25}. The reason for using
linear forests is that they are the right intermediate objects: every linear
forest can potentially be completed to a Hamilton cycle, provided it has access
to enough unused random edges and vertices. To keep this access available, we randomly split
\(G-W\) into a fixed constant number of subgraphs, where the number of parts
depends only on \(C\). Each part comes with its own linear-sized reservoir of
vertices outside it. The split is chosen so that every part has maximum degree
noticeably smaller than \(\Delta(G)/k\) (recalling that $\Delta(G-W)\leq (1-\alpha)\Delta(G)$), and every vertex still
has many neighbours in each corresponding reservoir. Applying the approximate
linear arboricity theorem separately to these parts gives a collection of
linear forests covering \(G-W\). Crucially, this collection has size
smaller than \(k\) by a positive fraction of \(k\). The unused fraction is the
slack that will later absorb the exceptional vertices and the few edges lost
when the typical and exceptional pieces are merged.

The vertices in \(B\) and \(S\) are handled by explicit local constructions. For
each high-degree vertex \(b\in B\), we distribute the edges incident to \(b\)
among \(k\) cherries with centers at that vertex, so that edges are repeated only when necessary.  For each
low-degree vertex \(s\in S\), the situation is different: every Hamilton cycle
must pass through \(s\), but there are very few edges available there. We build
\(k\) small forest gadgets through \(\{s\}\cup N(s)\), with endpoints pushed out
to \(N^2(s)\), so that all edges incident to the low-degree region are covered.
Because the vertices in \(S\) are far apart, these gadgets do not interfere with
one another. Combining the high-degree and low-degree gadgets gives \(k\)
exceptional forests.

At this point we have two kinds of forests: typical forests covering \(G-W\),
and exceptional forests covering the edges incident to \(B\), \(S\), and
\(N(S)\). The next task is to combine them without increasing the total number
of forests. We pair each typical forest with one exceptional forest. If the two
forests conflict at a vertex, we delete the offending typical edge before taking
their union. The deleted edges form a graph of bounded maximum degree, a
consequence of the separation of \(B\) and \(S\) and the fact that vertices
outside \(W\) have small neighbourhood in $W$. The
exceptional forests that were not paired provide enough spare capacity to absorb
this bounded-degree leftover graph by a simple merging lemma. Thus we obtain
exactly \(k\) linear forests covering all edges of \(G\), and each forest still
has a private linear-sized reservoir in which all of its endpoints have many
neighbours.

The final step is to extend each of these $k$ forests to a Hamilton cycle. Conceptually, the task is to route the endpoints of the forests through the remaining random set. This is the point where the sparse regime creates a genuine technical difficulty: the available reservoir is large, but it comes with only a weak minimum-degree guarantee.
Fortunately, one can show that for carefully chosen parameters, one can utilize the extendability method, introduced in \Cref{sec:FP}, which allows us to connect the components of each forest using the reserved random edges and then close the resulting path into a Hamilton cycle. Applying this extension lemma to all $k$ forests gives the desired tight Hamilton cover. The procedure is constructive throughout; on the high-probability event given by the random graph estimates, all steps can be implemented in randomized polynomial time.


\section{Preliminaries}
Here we state and show several basic graph theoretic results, as well as some probabilistic tools. We first introduce some standard notation.

\subsection{Notation and definitions}
We use standard graph theoretic notation. Given a graph $G$, we denote by $V(G)$ its vertex set and by $E(G)$ its edge set. Given $S\subseteq V(G)$, we denote by $G[S]$ the subgraph of $G$ induced by $S$, and by $N_G(S)$ the external neighbourhood of $S$ in $G$ (omitting the subscript whenever it is clear from the context). We denote by $N^k_G(S)$ the set of vertices at distance $k$ from $S$, in particular $N^1_G(S)=N_G(S)$. For subsets $S_1,S_2\subseteq V(G)$ we denote by $e_G(S_1,S_2)$ the number of edges of $G$ with one endpoint in $S_1$ and the other in $S_2$. For a vertex $v\in V(G)$, we denote by $\partial_G(v)$ the set of edges incident to $v$ in $G$. We denote by $\delta(G)$ and $\Delta(G)$ the minimum and maximum degrees of $G$, respectively. A \emph{linear forest} is a graph consisting of a disjoint union of paths. For a linear forest $F$, we let $End(F)$ denote the set of vertices of degree exactly $1$ in $F$ (that is, the path endpoints, with isolated vertices excluded), and for a collection $\cF$ of linear forests we set $End(\cF):=\bigcup_{F\in\cF}End(F)$. A Hamilton cycle in a graph $G$ is a cycle containing every vertex in $G$. We let $G(n, p)$ denote the binomial random graph on $n$ labeled vertices, where each edge is included independently with probability $p$. All logarithms are natural unless stated otherwise.
\subsection{Auxiliary results about linear forests}
We start with a simple lemma which allows us to extend a linear forest by edges of a low maximum degree graph.
  \begin{lemma}\label{lem:merginglinfor}
  Let \(H_1,H_2\) be graphs, and let \(\mathcal F\) be a collection of linear
  forests covering \(E(H_1)\). Suppose that every vertex of \(V(H_2)\) has
  degree
  at most \(d\) in both \(H_1\) and \(H_2\), and that every edge of \(H_1\)
  incident
  to \(V(H_2)\) lies in at most \(\ell\) forests of \(\mathcal F\). If
  \[
  |\mathcal F|\ge 4d\ell+1,
  \]
  then the edges of \(H_2\) can be added to the forests in \(\mathcal F\) so
  that
  the resulting collection is again a collection of \(|\mathcal F|\) linear
  forests covering \(E(H_1)\cup E(H_2)\). Moreover, each resulting forest is
  obtained from its original forest by adding edges of \(H_2\) only, and the
  union
  of endpoint sets is contained in \(End(\mathcal F)\cup V(H_2)\).
  \end{lemma}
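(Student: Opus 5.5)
We will add the edges of \(H_2\) one at a time, greedily, each time to a forest chosen so that linearity is preserved. Fix an ordering of \(E(H_2)\) (edges already in \(E(H_1)\), or already covered, may simply be skipped). When adding an edge \(uv\in E(H_2)\), with \(u,v\in V(H_2)\), to a forest \(F\in\mathcal F\) (in its current, partially extended form), the union \(F+uv\) is again a linear forest provided that
(i) \(d_F(u)\le 1\) and \(d_F(v)\le 1\), and
(ii) \(u\) and \(v\) are not endpoints of the same path of \(F\), i.e.\ adding \(uv\) closes no cycle.
We bound the number of forests violating each condition and show that fewer than \(|\mathcal F|\) forests are excluded.

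\emph{Condition (i) at \(u\).} A forest \(F\) has \(d_F(u)=2\) only if it uses two edges at \(u\). These are either original edges of \(H_1\) at \(u\) or previously added edges of \(H_2\) at \(u\). There are at most \(d\) edges of \(H_1\) at \(u\), each lying in at most \(\ell\) forests, which contributes at most \(d\ell\) edge-forest incidences. There are at most \(d-1\) previously added \(H_2\)-edges at \(u\), each added to exactly one forest, contributing at most \(d-1\) incidences. So the total number of incidences at \(u\) is at most \(d\ell+d-1\). Since each excluded forest consumes two incidences, at most \((d\ell+d-1)/2\) forests are excluded because of \(u\). The same bound holds for \(v\).

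\emph{Condition (ii).} A forest excluded by (ii) has \(d_F(u)\ge1\), so it consumes at least one incidence at \(u\). A cruder count is therefore safer: every forest that is excluded for any reason contains at least one edge at \(u\) or at \(v\). The number of forests containing some edge at \(u\) is at most \(d\ell+d-1\), and likewise at \(v\). Hence at most \(2(d\ell+d-1)\le 4d\ell-2<|\mathcal F|\) forests are excluded, using \(\ell\ge1\). (If \(\ell=0\), the forests contain no \(H_1\)-edges at \(V(H_2)\), the count becomes \(2(d-1)\), and we should check that this is still below \(|\mathcal F|\ge 1\). This fails when \(d\ge 2\), so the degenerate case \(\ell=0\) needs care, or we may assume \(\ell\ge1\) since every covered edge lies in at least one forest.) The condition \(|\mathcal F|\ge 4d\ell+1\) gives ample room, so a free forest always exists, and we add \(uv\) to it.

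The remaining claims follow directly from the construction. Forests only gain \(H_2\)-edges, so together they still cover \(E(H_1)\), and now also \(E(H_2)\). A vertex outside \(V(H_2)\) never changes its degree in any forest, so every new endpoint lies in \(V(H_2)\), and every other endpoint was already in \(End(\mathcal F)\). The only step requiring care is the incidence counting, in particular making sure the previously added \(H_2\)-edges are included in the degree bound at \(u\) and \(v\). This is routine, so I expect no real obstacle beyond the bookkeeping and the degenerate \(\ell=0\) case.
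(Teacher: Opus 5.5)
Your argument is correct and is essentially the paper's proof: the paper also adds the edges of $H_2$ greedily, bounds by $2d\ell$ the number of forests in which $u$ (resp.\ $v$) is non-isolated, and places $uv$ into a forest where both endpoints are isolated. That is exactly what your ``cruder count'' guarantees, so your conditions (i)--(ii) are never actually needed.

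On the degenerate case, for $\ell=0$ and $d\ge 2$ the statement is simply false: take $H_1$ empty, $\mathcal F$ a single empty forest and $H_2$ a triangle. So it cannot be rescued by ``assuming $\ell\ge 1$'', since raising $\ell$ raises the required lower bound on $|\mathcal F|$. However, the paper's proof tacitly uses $\ell\ge 1$ in the same way, and every application in the paper has $\ell\ge 1$, so nothing is lost.
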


\begin{proof}
We add the edges of $H_2$ one by one to linear forests in $\mathcal{F}$. Suppose we have added $i<e(H_2)$ edges from $H_2$ to linear forests in $\mathcal{F}$, and consider an edge $e=(u,v)$ of $H_2$ which we have not added yet. The endpoints $u,v$ lie in $V(H_2)$, so they each have degree at most $d$ in $H_1$ and at most $d$ in $H_2$, hence degree at most $2d$ in $H_1\cup H_2$. Therefore the number of forests of $\mathcal{F}$ in which $u$ is not isolated is at most $2d\ell$, and the same holds for $v$. Since $|\mathcal{F}|\geq 4d\ell+1$, there is a forest of $\mathcal{F}$ in which both $u$ and $v$ are isolated, so we can add $e$ to that forest. This completes the proof.
\end{proof}

\noindent We also need an approximate version of the linear arboricity conjecture, first shown by Alon~\cite{alon1988linear}.
\begin{theorem}\label{thm:approxLAC}
For every $\eps>0$ there exists $\Delta_0$ such that the following holds for all $\Delta\ge \Delta_0$. Every graph with maximum degree at most $\Delta$ can be decomposed into $\lceil(1+\eps)\Delta/2\rceil$ linear forests.
\end{theorem}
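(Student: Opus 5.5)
This is Alon's theorem~\cite{alon1988linear}, and we would simply invoke it. For completeness, here is how we would prove it. The plan is to reduce to regular graphs, split into $2$-factors, break every cycle, and pay for the broken edges with few extra forests.

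\emph{Regularisation and $2$-factors.} First embed $G$ into a $D$-regular graph $G'$, where $D\in\{\Delta,\Delta+1\}$ is even. This is the standard construction: take two copies of $G$, join corresponding vertices of deficient degree, and repeat. By Petersen's theorem, $E(G')$ decomposes into $D/2$ $2$-factors $F_1,\dots,F_{D/2}$. Each $F_i$ is a disjoint union of cycles. Deleting one edge from every cycle of $F_i$ turns it into a linear forest $F_i'$. If the set $R$ of deleted edges has $\Delta(R)\le \eps\Delta/4$, then Vizing's theorem splits $R$ into at most $\eps\Delta/4+1$ matchings, and each matching is a linear forest. Together we get $D/2+\eps\Delta/4+1\le \lceil(1+\eps)\Delta/2\rceil$ linear forests for $\Delta\ge\Delta_0$. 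Restricting everything to $G$ preserves linearity, so this suffices.

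\emph{Choosing the deleted edges.} We would delete edges by a random choice analysed with the Lov\'asz Local Lemma. For every cycle $C$ of length at least some $g=g(\eps)$ in any $F_i$, mark a uniformly random edge of $C$. The expected number of marked edges at a vertex $v$ is at most $\sum_i 2/g\le D/g\le\eps\Delta/8$. The bad event ``$v$ has more than $\eps\Delta/4$ marked edges'' depends only on the cycles through $v$, so it is independent of all but $\mathrm{poly}(\Delta)$ other bad events. A Chernoff bound gives probability $e^{-\Omega(\eps\Delta)}$, which beats this dependency degree once $\Delta\ge\Delta_0$. The LLL then yields a good choice.

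\emph{Main obstacle: short cycles.} For a cycle of length $\ell<g$, any deleted edge puts load about $1/\ell$ on its vertices. If many $F_i$ contain short cycles through a common vertex, the load is far too large. I would first try to remove short cycles in advance, again with the LLL. Partition $V(G)$ randomly into $r=r(\eps)$ parts $V_1,\dots,V_r$ so that every vertex has $(1\pm\eps/10)\Delta/r$ neighbours in each part. The edges inside parts form a graph of maximum degree about $\Delta/r$; these are split off and handled by the Vizing argument above at cost $O(\eps\Delta)$. For the remaining cross edges, reorder the $2$-factorisation, for example via Eulerian orientations of the bipartite pieces $G[V_j,V_k]$ recombined into $2$-factors in a random order. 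The aim is that each $F_i$ has few short cycles through any given vertex. I expect carrying out this girth control uniformly, so that every vertex lies on short cycles in only $O(\eps\Delta)$ of the $F_i$, to be the genuinely delicate step. Once it holds, the short cycles can be broken arbitrarily, with their deleted edges absorbed into $R$ at no extra asymptotic cost.
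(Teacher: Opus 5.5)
Citing Alon's theorem is exactly the paper's treatment: it states the result with a reference to~\cite{alon1988linear} and gives no proof. As a citation, your proposal is fine. The ``for completeness'' sketch, however, would not stand as a proof.

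A smaller issue first: the dependency count in your Local Lemma step is wrong. The event for $v$ is determined by one random mark on each cycle of $F_i$ through $v$. Such a cycle can have length comparable to $|V(G')|$, so the event shares randomness with the events of up to $|V(G')|$ other vertices, not $\mathrm{poly}(\Delta)$. Since the statement concerns graphs of arbitrary order with $\Delta$ fixed, the bound $e^{-\Omega(\eps\Delta)}$ cannot pay for that. This is repairable. Cut every cycle of length at least $g$ into consecutive arcs of lengths in $[g,2g)$, and mark one uniformly random edge of each arc independently. The expected load at $v$ is still at most $D/g$, each choice affects $O(g)$ vertices, and the dependency degree becomes $O(g\Delta)$.

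The real gap is the short-cycle step. It is the actual content of the theorem, and you leave it open. A random vertex partition leaves short cycles among the cross edges intact ($4$-cycles between two parts, triangles through three parts). Petersen's theorem gives no control over which $2$-factorisation you get, and ``recombining in a random order'' is not specified. So nothing prevents many $F_i$ from having short cycles through the same vertex.

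The missing idea, which is how Alon's argument proceeds, is to orient the edges and index the parts by a cyclic group:
\begin{itemize}
\item Orient $G$ so that all in- and out-degrees are at most $\lceil\Delta/2\rceil$ (an Euler-tour orientation). Pick a random $f\colon V(G)\to\mathbb{Z}_p$ with $p=p(\eps)$ prime, and put each arc $(u,v)$ into class $j=f(v)-f(u)$.
\item For $j\neq 0$, every directed cycle of class-$j$ arcs has length divisible by $p$, so girth control becomes deterministic.
\item By the Local Lemma, all in- and out-degrees in each class are at most $\Delta/(2p)+O(\sqrt{(\Delta/p)\log\Delta})$.
\item For a class $j\neq 0$, K\"onig's edge-colouring theorem on its bipartite out/in double cover splits it into that many subgraphs with in- and out-degrees at most $1$. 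These are disjoint unions of paths and directed cycles of length at least $p$, so your segmented marking argument (with $g=p$) and Vizing apply.
\item Class $0$ has maximum degree about $\Delta/p$ and goes straight into the error budget.
\end{itemize}
The total excess is $O(\Delta/p+\sqrt{p\Delta\log\Delta})$. This is at most $\eps\Delta/2$ once $p$ is large in terms of $\eps$ and $\Delta\geq\Delta_0(\eps)$.
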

\subsection{Degree sequence in random graphs}\label{sec:highdegree}
\noindent In this section we will discuss various properties typically satisfied by vertices of high degree, that is, close to the maximum degree, in the random graph $G(n,p)$. 
\begin{lemma}[\cite{bollobas1998random}]\label{lem:binomestimate}
Let $X\sim \mathrm{Bin}(n,p)$ with $pn \geq 1$ and $q := 1-p$. Then, if $hqn \geq 3$, we have
$$\mathbb{P} \left(X \geq pn + h \right) < \sqrt{\frac{pqn}{2h^2 \pi}} \cdot \exp\left(-\frac{h^2}{2pqn} + \frac{h}{pqn} + \frac{h^3}{p^2n^2} \right) .$$
\end{lemma}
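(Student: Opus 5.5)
We prove the stated tail bound by comparing the binomial point probabilities with a Gaussian profile and then summing them as a geometric series. Throughout we write $k_0:=\lceil pn+h\rceil$ and $x:=k-pn$ for a generic integer $k\ge k_0$, so that $x\ge h$.

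The first step is a Stirling estimate for a single term. Write $b(k)=\binom nk p^kq^{n-k}$ and apply Stirling's formula with explicit error bounds. This gives
\[
b(k)\le \frac{1+o(1)}{\sqrt{2\pi}}\sqrt{\frac{n}{k(n-k)}}\exp\bigl(-k\log\tfrac{k}{pn}-(n-k)\log\tfrac{n-k}{qn}\bigr).
\]
We then expand the exponent. Put $k=pn+x$ and $n-k=qn-x$, and use $(1+t)\log(1+t)\ge t+t^2/2-t^3/6$. The exponent is at most
\[
-\frac{x^2}{2pqn}+O\Bigl(\frac{x^3}{p^2n^2}\Bigr).
\]
When $x$ is comparable to $qn$, this estimate must be replaced by the trivial regime; the hypotheses $pn\ge1$ and $hqn\ge3$ are what control these boundary cases.

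The second step is a ratio bound. The successive ratio is
\[
\frac{b(k+1)}{b(k)}=\frac{(n-k)p}{(k+1)q}\le \frac{1-x/(qn)}{1+x/(pn)}\le \exp\Bigl(-\frac{x}{pqn}\Bigr)\quad\text{approximately}.
\]
Hence the tail $\sum_{k\ge k_0}b(k)$ is at most $b(k_0)\bigl(1-e^{-h/(pqn)}\bigr)^{-1}$. This is where the correction factor $e^{h/(pqn)}$ and the prefactor $pqn/h$ come from: multiplying the point estimate $\sqrt{1/(2\pi pqn)}$ by $pqn/h$ gives exactly $\sqrt{pqn/(2\pi h^2)}$.

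The main obstacle is the bookkeeping of constants. The claimed cubic error term carries coefficient $1$, and there is only slack of a factor $e^{h/(pqn)}$. To manage this, I would first try to absorb the Stirling $1+o(1)$ factors and the difference between $k_0$ and $pn+h$ into that slack. Since this is the classical estimate from Bollobás's book (Theorem 1.7 there), I would ultimately cite it directly rather than redo the constants, which is also how the paper treats it.
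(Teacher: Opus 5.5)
The paper gives no proof of this lemma; it quotes it from Bollob\'as's book. Your decision to cite it rather than rederive the constants therefore matches what the paper does, and on that basis the proposal is acceptable.

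Read as a proof, however, your sketch has a step that is false, not just imprecise: the last inequality $\frac{1-x/(qn)}{1+x/(pn)}\le e^{-x/(pqn)}$ in your ratio display. Take $pn=C\log n$ and $x=\sqrt{2pn\log n}$. Then $\beta:=x/(pn)=\sqrt{2/C}$ is a positive constant, while $x/(qn)\to0$ and $pn\to\infty$. The true ratio $\frac{(n-k)p}{(k+1)q}$ tends to $\frac{1}{1+\beta}$, which exceeds $e^{-\beta}$. So the tail bound $b(k_0)\bigl(1-e^{-h/(pqn)}\bigr)^{-1}$ is not justified.

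Compute the ratio exactly instead. Put $x_0:=k_0-pn\in[h,h+1)$ and $r_0:=b(k_0+1)/b(k_0)$. Then $1-r_0=\frac{x_0+q}{q(k_0+1)}$, and the ratios decrease in $k$, so
\[
\mathbb{P}(X\ge pn+h)\le\frac{b(k_0)}{1-r_0}=\frac{q(pn+x_0+1)}{x_0+q}\,b(k_0)\le\frac{pqn}{x_0}\Bigl(1+\frac{x_0}{pn}\Bigr)b(k_0).
\]
The last step is equivalent to $x_0\le qn$.

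For $b(k_0)$, use Robbins' form of Stirling. For $1\le k_0\le n-1$ it gives your point bound with constant $1$ in place of $1+o(1)$; an $o(1)$ has no meaning in a non-asymptotic inequality. Its prefactor is $(2\pi pqn)^{-1/2}\bigl((1+\frac{x_0}{pn})(1-\frac{x_0}{qn})\bigr)^{-1/2}$. Multiplying, the tail is at most $\sqrt{\frac{pqn}{2\pi x_0^2}}\sqrt{\frac{1+x_0/(pn)}{1-x_0/(qn)}}\,e^{G(x_0)}$, where $G$ is your exponent.

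This changes your slack accounting in two ways.
\begin{itemize}
\item The factor $e^{x_0/(pqn)}$ does not just pay for the geometric series. Since $\frac1{pq}=\frac1p+\frac1q$, it splits as $e^{x_0/(pn)}e^{x_0/(qn)}$, and it pays for the factor $\sqrt{(1+x_0/(pn))/(1-x_0/(qn))}$. When $x_0$ is near $qn$, you also need the higher-order terms of $-(n-k_0)\log\frac{n-k_0}{qn}$ that the cubic Taylor bound discards.
\item The cubic term is not the tight spot. Your Taylor inequality holds for all $t>-1$ and gives $\frac{x^3}{6p^2n^2}$, plus a favourable $q$-side cubic term. This surplus, together with $1/x_0\le1/h$, $x_0^2\ge h^2$, $pn\ge1$ and $hqn\ge3$, is what absorbs replacing $x_0$ by $h$.
\end{itemize}
No separate regime for $x$ comparable to $qn$ is needed. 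The only boundary case is $k_0=n$, where $\mathbb{P}(X\ge pn+h)=p^n$ can be checked directly.
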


\begin{lemma}[\cite{bollobas1998random},\cite{krivelevich2012optimal}]\label{lem:maxdegree}\label{lem:degrees in gnp}
Let $\log n/n\leq p\leq n^{-1/2}$. With high probability, the minimum degree $\delta$ and the maximum degree $\Delta$ of $G(n,p)$ satisfy
\begin{enumerate}[label=\rm{(\roman*)}]
\item $pn+(1-o(1))\sqrt{2pn\log n}\leq \Delta\leq pn+2\sqrt{2pn\log n}$;
\item $\delta\leq pn-\frac{1}{4}\sqrt{2pn\log n}$.

\end{enumerate}
\end{lemma}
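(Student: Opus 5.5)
The plan is a first-moment/second-moment computation on degrees, with Lemma~\ref{lem:binomestimate} as the main tail estimate. Each degree $d(v)$ is $\mathrm{Bin}(n-1,p)$. Write $\mu=(n-1)p$, $\sigma^2=\mu(1-p)$, and note $\log n\le pn\le n^{1/2}$, so $\sigma^2=(1+o(1))pn$.

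\emph{Upper bound in (i).} Take $h=2\sqrt{2pn\log n}$. Then $h^2/(2pqn)=(4+o(1))\log n$. The correction term $h/(pqn)=O(\sqrt{\log n/(pn)})=O(1)$. The term $h^3/(p^2n^2)=O\big((\log n)^{3/2}/\sqrt{pn}\big)=O(\log n)$, and its constant is small. This needs checking: at $pn=\log n$ it equals $16\sqrt2\log n$, which is \emph{too large}. So for the sparse end I would instead use a direct estimate,
\[
\bP(X\ge k)\le\binom{n}{k}p^k\le (e\mu/k)^k .
\]
With $k=pn+2\sqrt{2pn\log n}$ one has $k/\mu\ge 1+2\sqrt2$ when $pn=\Theta(\log n)$, and the bound becomes $n^{-c}$ with $c>1$. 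Cleanly, I would split into two ranges. For $pn\ge(\log n)^{3}$, Lemma~\ref{lem:binomestimate} applies and the $h^3$ term is $o(\log n)$. For smaller $pn$, use the Chernoff form $\bP(X\ge\mu+t)\le\exp\big(-t^2/(2(\mu+t/3))\big)$ and verify that the exponent exceeds $(1+\epsilon)\log n$; this needs $t^2\ge 2.2(\mu+t/3)\log n$, which holds with $t=2\sqrt{2\mu\log n}$ since $\mu\ge\log n$. A union bound over $n$ vertices then gives $\bP(\Delta>pn+h)=o(1)$.

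\emph{Lower bound in (i) and (ii).} Fix $\epsilon>0$ and let $X$ count vertices with $d(v)\ge pn+(1-\epsilon)\sqrt{2pn\log n}$. I need a matching lower tail estimate for the binomial, namely
\[
\bP\big(d(v)\ge \mu+x\sigma\big)\ge n^{-(1-\epsilon)^2-o(1)}
\quad\text{for } x=(1-\epsilon)\sqrt{2\log n}.
\]
For $pn\ge(\log n)^3$ this follows from de Moivre--Laplace / local-limit estimates with Cram\'er corrections. In the sparse range I would use the explicit point probability $\binom{n-1}{k}p^k(1-p)^{n-1-k}$ and Stirling's formula, giving exponent $-\mu\,\phi(k/\mu)$ with $\phi(t)=t\log t-t+1$. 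Since $\phi(1+\delta)\le\delta^2/2$ for $\delta>0$, the upper deviation is at least as likely as the Gaussian prediction. (The lower deviation is the reverse; see below.) Hence $\bE X\ge n^{\epsilon'}\to\infty$.

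For concentration, $d(u)$ and $d(v)$ are independent apart from the single edge $uv$. Conditioning on that edge changes the probabilities by a factor $1+o(1)$ (shifting $k$ by one multiplies the point probability by about $k/\mu=1+o(1)$). So $\mathrm{Var}X=o((\bE X)^2)$, and Chebyshev gives $X>0$ whp. Part (ii) is identical with the lower tail at deviation $\tfrac14\sqrt{2pn\log n}$. Here the exponent is about $(1/16)\log n\cdot(1+o(1))$: the skewness of the lower tail only helps for moderate $\delta=O(\sqrt{\log n/pn})\le O(1)\cdot$small, and with $\delta\le\tfrac14\sqrt{2}<1$ one gets $\phi(1-\delta)\le\delta^2/2+\delta^3$, which keeps the exponent below $\tfrac12\log n$. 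Therefore $\bE X\ge n^{1/2}$, and the same second-moment argument applies.

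\emph{Main obstacle.} The main obstacle is uniform control of the binomial point and tail probabilities across the whole range $\log n\le pn\le\sqrt n$. In particular, at $pn=\Theta(\log n)$ the deviations are of the same order as the mean, so Gaussian approximations fail and one must work with the exact Stirling/entropy exponent $\mu\phi(k/\mu)$. Since the statement is attributed to \cite{bollobas1998random,krivelevich2012optimal}, in the paper itself I would largely cite these computations, indicating only the entropy-function inequalities above.
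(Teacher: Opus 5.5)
Your proof is correct, and for the upper bound in (i) it is the paper's argument: a Chernoff bound gives $\bP\bigl(d(v)\ge pn+2\sqrt{2pn\log n}\bigr)=o(1/n)$, followed by a union bound. You are right that \Cref{lem:binomestimate} is useless near $pn=\log n$. But the Chernoff form $\exp\bigl(-t^2/(2(\mu+t/3))\bigr)$ already gives an exponent of at least about $2\log n$ on the whole range, so the case split there can be dropped.

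For the two lower bounds the routes differ. The paper computes nothing. It obtains $\Delta\ge pn+(1-o(1))\sqrt{2pn\log n}$ from Theorem~3.12 of \cite{bollobas1998random}, applied to the $m$-th largest degree $d_m$ with $m\to\infty$ slowly and using $\Delta=d_1\ge d_m$. It quotes (ii) from Lemma~2.2 of \cite{krivelevich2012optimal}.

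You instead reprove both statements by a first- and second-moment count of vertices of extreme degree, which is the mechanism behind those citations. What your route buys is transparency at the sparse end $pn=\Theta(\log n)$, where deviations are of the order of the mean and Gaussian heuristics fail. With $\phi(t)=t\log t-t+1$, the inequality $\phi(1+\delta)\le\delta^2/2$ shows the upper tail is at least as heavy as the Gaussian prediction. For the lower tail, $\phi(1-\delta)=\sum_{j\ge2}\delta^j/(j(j-1))\le\delta^2/2+\delta^3$ for $\delta\le\sqrt2/4$, which caps the exponent in (ii) at roughly $0.11\log n$ and leaves ample room. The citation route is much shorter, but it relies on the cited theorems' hypotheses reaching down to $pn=\log n$; your computation checks this directly.

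When writing it out, fix one justification. In the variance step you say that conditioning on the edge $uv$ changes probabilities by a factor $1+o(1)$ because $k/\mu=1+o(1)$. This is false when $pn=\Theta(\log n)$: there $k/\mu=1+\delta$ with $\delta=(1-\epsilon)\sqrt{2\log n/(pn)}=\Theta(1)$.

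The conclusion survives for a different reason. Write $d(u)=e_{uv}+Y_u$ and $d(v)=e_{uv}+Y_v$, where $e_{uv}\in\{0,1\}$ indicates the edge and $Y_u,Y_v\sim\mathrm{Bin}(n-2,p)$ are independent. Then exactly $\mathrm{Cov}(I_u,I_v)=p(1-p)\,\bP(Y=k-1)^2$ for the upper tail, and $p(1-p)\,\bP(Y=k)^2$ for the lower tail. Moreover $\bP(Y=k-1)=O(\bP(I_u))$ since $k/\mu=O(1)$. Hence $\mathrm{Var}X\le\bE X+O(p)(\bE X)^2=o\bigl((\bE X)^2\bigr)$; the saving comes from the factor $p$, not from $k/\mu\to1$.

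Two further points should be stated explicitly. First, the exact exponent of the binomial point probability is $(n-1)D\bigl(k/(n-1)\,\|\,p\bigr)$ rather than $\mu\phi(k/\mu)$; the two differ by $O(p\mu\delta^2)=o(\log n)$ because $p\le n^{-1/2}$. Second, the $\Theta(k^{-1/2})$ prefactor is $n^{-o(1)}$ only for polylogarithmic $k$. That is why your split at $pn=(\log n)^3$ is genuinely needed for the lower bound on $\Delta$, whereas in (ii) the slack absorbs it.
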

\begin{proof}
For the first part, the upper bound follows by a union bound over all vertices, since by Chernoff bounds the probability that a vertex has degree at least  $pn+2\sqrt{2pn\log n}$ is at most $o(1/n)$.
The lower bound is a direct consequence of Theorem 3.12 in \cite{bollobas1998random} with $m$ being a function which tends to infinity arbitrarily slowly and noting that $\Delta=d_1\geq d_m$. The second part is proven in (\cite{krivelevich2012optimal}, Lemma 2.2).
\end{proof}

\begin{prop}\label{prop:bad vertices}
Let $C>1$, consider $\log n/n\leq p\leq C\log n/n$, and let $\alpha=1/10$. Define $B$ to be the set of vertices with degree at least $pn + (1-\alpha)\sqrt{2pn\log n}$ in $G(n,p)$, and let $S$ be the vertices with degree less than $(\log\log n)^4$. Then whp all of the following hold.
\begin{enumerate}
    \item $|B|,|S| \leq \sqrt{n}$.
    \item There are no paths of length between $1$ and $6$ with both endpoints in $B\cup S$. There are also no cycles of length at most $6$ that contain a vertex in $B\cup S\cup N(S)$.
    \item If $np\leq 1.01\log n$ then $\log n/10\leq d(v)\leq 1.1\log n$ for all $v\in N(S)$.
    \item If $np\geq 1.01\log n$ then $\Delta/20\leq d(v)\leq (1-\varepsilon)\Delta$ for all $v\in N(S)$, where $\varepsilon=\varepsilon(C)>0$ is small enough.
\end{enumerate}
\end{prop}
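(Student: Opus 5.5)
All four parts are first-moment computations over small configurations, followed by union bounds. Write $d=np$, so $\log n\le d\le C\log n$, and let $\beta:=(1-\alpha)\sqrt{2d\log n}$.

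\emph{Tail estimates.} For a fixed vertex $v$ we need two bounds. Applying Lemma~\ref{lem:binomestimate} with $h=\beta$ gives
\[
\bP(d(v)\ge d+\beta)\le n^{-(1-\alpha)^2+o(1)}=n^{-0.81+o(1)},
\]
since $h^3/(p^2n^2)=O(\sqrt{\log n})$ is negligible. We also need this after removing $O(1)$ prescribed neighbours, so we apply the estimate to $\mathrm{Bin}(n-O(1),p)$ with threshold lowered by $O(1)$. For small degrees, since $d\ge\log n$, a direct computation gives
\[
\bP(d(v)<(\log\log n)^4)\le \binom{n}{(\log\log n)^4}p^{(\log\log n)^4}(1-p)^{n-(\log\log n)^4}\le n^{-1+o(1)}.
\]
Now $\bE|B|\le n^{0.19+o(1)}$ and $\bE|S|\le n^{o(1)}$, so Markov's inequality gives item~1.

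\emph{Item 2.} For a path $v_0\dots v_\ell$ with $1\le\ell\le 6$, the expected number of such paths is at most $n^{\ell+1}p^\ell=n\,d^\ell$. Conditioned on the path being present, the events that the endpoints lie in $B\cup S$ depend on the remaining edges at $v_0$ and $v_\ell$. These are nearly independent; only the single edge $v_0v_\ell$ is shared, which we handle by conditioning on it. Each endpoint event has probability at most $n^{-0.81+o(1)}$, so the expected count is
\[
n\,(\log n)^{O(1)}\,n^{-1.62+o(1)}=o(1).
\]
For cycles of length $\ell\le 6$ containing a vertex of $B\cup S\cup N(S)$: such a cycle either contains a vertex of $B\cup S$, or contains a vertex $u\in N(S)$ whose neighbour $s\in S$ lies off the cycle. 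In the first case the count is at most $n^\ell p^\ell\cdot n^{-0.81+o(1)}=o(1)$. In the second case we add the pendant edge $us$, getting $n^{\ell+1}p^{\ell+1}\cdot n^{-1+o(1)}=(\log n)^{O(1)}n^{-1+o(1)}=o(1)$.

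\emph{Items 3 and 4.} We bound the expected number of edges $su$ with $d(s)<(\log\log n)^4$ and $d(u)$ outside the target range. There are $n^2p$ such pairs. Conditioned on the edge $su$, the two remaining degrees are independent binomials, so the expectation is
\[
n\,d\cdot n^{-1+o(1)}\cdot \bP(\text{bad degree of }u)=n^{o(1)}\,\bP(\text{bad degree of }u).
\]
It therefore suffices to show $\bP(\text{bad})\le n^{-c}$ for some constant $c>0$.

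For item~3, with $d\le1.01\log n$, the Chernoff bounds give $\bP(d(u)<\log n/10)\le e^{-d(1-0.1/1.01)^2/2}$. This is not small enough as written, so we use the sharper relative-entropy form instead: $\bP(\mathrm{Bin}<ad)\le e^{-d(1-a+a\log a)}$. With $a\approx0.1$ this gives $1-a+a\log a\approx0.67$, so the probability is $n^{-0.67}$. The upper tail $\bP(d(u)>1.1\log n)\le e^{-d\,\phi(0.09)}=n^{-\Omega(1)}$, where $\phi(x)=(1+x)\log(1+x)-x$.

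For item~4, use $\Delta\le d+2\sqrt{2d\log n}\le 4d$ and $\Delta\ge d$ from Lemma~\ref{lem:maxdegree}, so $d(u)<\Delta/20$ forces $d(u)<d/5$, which has probability $n^{-c}$ by the same entropy bound. Finally, $d(u)>(1-\varepsilon)\Delta$ implies, for $\varepsilon$ small in terms of $C$, that $d(u)\ge d+(1-\alpha)\sqrt{2d\log n}\cdot$ (close to $1$). We use $\Delta\ge d+(1-o(1))\sqrt{2d\log n}$, and since $d\le C\log n$ we have $\varepsilon\Delta\le 5\varepsilon C\log n$, which is small compared with $\sqrt{2d\log n}\ge\sqrt2\log n$. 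Hence the event has probability $n^{-0.5}$, again via Lemma~\ref{lem:binomestimate}.

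The \textbf{main obstacle} is the lower-tail constant in item~3 when $d$ is just above $\log n$. The bound must beat $n^{-o(1)}$ by a polynomial factor, so the choice $1/10$ has to be checked with the exact entropy exponent; the crude Chernoff form is not sufficient there.
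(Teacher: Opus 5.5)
Your skeleton matches the paper's: first-moment counts of short paths and cycles through exceptional vertices. For items~3--4 you count edges $su$ with $s\in S$, conditioning on $su$ so that the residual degrees are independent binomials. The step that fails is the upper-tail estimate that feeds items~1, 2 and your item~4.

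\Cref{lem:binomestimate} is a moderate-deviation (near-Gaussian) bound and gives nothing in this regime. With $pn=d\in[\log n,C\log n]$ and $h=(1-\alpha)\sqrt{2d\log n}$, the correction term is $h^3/(p^2n^2)=(1-\alpha)^3 2^{3/2}(\log n)^{3/2}/\sqrt{d}=\Theta(\log n)$, not $O(\sqrt{\log n})$. At $d=\log n$ it is about $2.06\log n$, larger than the main term $0.81\log n$, so the right-hand side exceeds $1$.

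Worse, the bound you state is false. Since $h/d=(1-\alpha)\sqrt{2\log n/d}$ is of constant order, this is a genuine large deviation. Its exponent is, up to lower-order terms, $d\,\phi(h/d)$ with $\phi(x)=(1+x)\log(1+x)-x$. At $d=\log n$ this gives $\bP(v\in B)=n^{-0.593+o(1)}$, not $n^{-0.81+o(1)}$, so $\bE|B|$ is about $n^{0.41}$, not $n^{0.19}$.

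The repair is what the paper does: use Chernoff in Bernstein form, $\bP(d(v)\ge \mu+h)\le \exp(-h^2/(2(\mu+h/3)))$. With $h/\mu\le (0.9+o(1))\sqrt{2}$ this gives exponent at least $0.81/(1+0.3\sqrt{2})>0.56$. Items~1 and~2 then go through, since all they need is an exponent strictly above $1/2$: this gives $\bE|B|=o(\sqrt{n})$ and $n(np)^{6}\,\bP(v\in B)^2=o(1)$. The margin is real but thin---at $d=\log n$ it vanishes once $\alpha$ exceeds roughly $0.18$---so it must be checked with a valid inequality, not the Gaussian heuristic.

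In item~4 you again appeal to \Cref{lem:binomestimate}. Chernoff would work, but the paper's route is simpler. Your own computation shows that for $\varepsilon=\varepsilon(C)$ small, $(1-\varepsilon)\Delta\ge pn+(1-\alpha)\sqrt{2pn\log n}$. So any $u\in N(S)$ with $d(u)>(1-\varepsilon)\Delta$ lies in $B$, and $su$ is then a path of length $1$ between $S$ and $B$, excluded by item~2.

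Finally, your \emph{main obstacle} is misplaced. The plain bound $\bP(X\le (1-\delta)\mu)\le e^{-\delta^2\mu/2}$ with $\delta\ge 0.9-o(1)$ already gives $n^{-0.4}$ for the lower tail in item~3. That beats the $n^{o(1)}$ prefactor, and it is exactly what the paper uses. The delicate constant is instead the upper-tail exponent for $B$, measured against $1/2$.
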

\begin{proof}
We prove each item using first moment arguments.

\medskip\noindent\textit{Item 1.}
Each vertex $v$ has degree $d(v)\sim \mathrm{Bin}(n-1,p)$ with mean $\mu:=(n-1)p$. For $B$, write $\mu=(n-1)p$ and
  $h=(1-\alpha)\sqrt{2pn\log n}$. By the Chernoff bound (see, e.g., \cite[Theorem~2.1]{janson2000random}),
  \[
  \bP[d(v)\geq \mu+h]
   \leq \exp\left(-\frac{h^{2}}{2(\mu+h/3)}\right).
  \]
  Since $np\geq \log n$ and $\alpha=1/10$, we have
  \[
  \frac{h}{\mu}\leq (0.9+o(1))\sqrt{2},
  \]
  and therefore
  \[
  \frac{h^{2}}{2(\mu+h/3)}
  \geq
  \frac{0.9^{2}\cdot 2pn\log n}
       {2pn\left(1+0.9\sqrt{2}/3+o(1)\right)}
  =
  \frac{0.81}{1+0.3\sqrt{2}+o(1)}\log n
  >0.56\log n.
  \]
  Thus
  \[
  \bP[v\in B]\leq n^{-0.56+o(1)}
  \quad\text{and hence}\quad
  \bE[|B|]\leq n^{0.44+o(1)}=o(\sqrt n).
  \]

For $S$, since $(\log\log n)^4\ll \mu$, the standard Poisson-type bound gives
$$\bP[v\in S]\leq \left(\frac{e\mu}{(\log\log n)^4}\right)^{(\log\log n)^4}\cdot e^{-\mu}\leq \exp\left(-\mu+O((\log\log n)^5\right)\leq \exp\left(-\log n+O((\log\log n)^5)\right).$$
Hence $\bE[|S|]\leq \exp(O((\log\log n)^5))=o(\sqrt n)$. By Markov's inequality, $|B|,|S|\leq \sqrt{n}$ whp.

\medskip\noindent\textit{Item 2.}
We use the first moment method. For a specific ordered tuple $(v_0,v_1,\ldots,v_k)$ with $k\leq 6$, the probability that $v_0v_1\cdots v_k$ is a path with $v_0,v_k\in B\cup S$ is at most $O(p^k\cdot \bP[v_0\in B\cup S]\cdot\bP[v_k\in B\cup S])$, since conditioning on the path edges incident to each endpoint changes degrees by at most $O(1)$, negligibly affecting the tail probabilities. Summing over all tuples, the expected number of such paths is at most
$$\sum_{k=1}^{6}O(n(np)^k)\cdot \bP[v\in B\cup S]^2\leq O(n(np)^6)\cdot n^{-1.18+o(1)}=O(n^{-0.18+o(1)}(\log n)^6)=o(1),$$
where we used $np=O(\log n)$ and $\bP[v\in B]\leq n^{-0.59+o(1)}$ (the contribution from $S$ is dominated). Similarly, the expected number of cycles of length $k\leq 6$ containing a vertex from $B\cup S$ is at most
$$\sum_{k=3}^{6}O((np)^k)\cdot \bP[v\in B\cup S]\leq O((\log n)^6\cdot n^{-0.59+o(1)})=o(1).$$
For cycles of length $k\leq 6$ containing a vertex of $N(S)$: conditioned on a fixed cycle being present, the probability that a specified cycle vertex has an additional neighbour in $S$ (and therefore lies in $N(S)$) is at most $np\cdot \bP[v'\in S]=O(\log n\cdot n^{-1+o(1)})$, so the expected number of such cycles is at most
$$\sum_{k=3}^{6}O((np)^k)\cdot O(\log n\cdot n^{-1+o(1)})=O((\log n)^7\cdot n^{-1+o(1)})=o(1).$$
By the first moment method, whp no such paths or cycles exist.

\medskip\noindent\textit{Item 3.}
Suppose $np\leq 1.01\log n$. For a pair of vertices $u,v$ with $uv\in E(G)$, conditioned on the edge $uv$, the residual degrees $d(u)-1$ and $d(v)-1$ are independent $\mathrm{Bin}(n-2,p)$ random variables. The expected number of edges $uv$ with $d(u)<(\log\log n)^4$ and $d(v)>1.1\log n$ is at most
$$n^2 p\cdot \bP[\mathrm{Bin}(n-2,p)<(\log\log n)^4]\cdot \bP[\mathrm{Bin}(n-2,p)>1.1\log n-1].$$
The first probability is $\exp(-\log n+O((\log\log n)^5))$ as before. For the second one, since $(n-2)p\leq 1.01\log n$, the threshold exceeds the mean by a factor of $1.1/1.01>1.08$, and the Chernoff bound gives $\exp(-\Omega(\log n))=n^{-\Omega(1)}$. The product is $n^2p\cdot n^{-1+o(1)}\cdot n^{-\Omega(1)}=o(1)$, since $(\log\log n)^5=o(\log n)$ absorbs the subpolynomial factor. The same argument applies for the lower bound: the expected number of edges $uv$ with $d(u)<(\log\log n)^4$ and $d(v)<\log n/10$ is at most
$$n^2p\cdot \exp(-\log n+O((\log\log n)^5))\cdot \exp(-\Omega(np))=o(1),$$
using $\bP[\mathrm{Bin}(n-2,p)<\log n/10]\leq \exp(-\Omega(np))$ by Chernoff (the threshold is a small fraction of the mean). Thus whp $\log n/10\leq d(v)\leq 1.1\log n$ for all $v\in N(S)$.

\medskip\noindent\textit{Item 4.}
By Item~2, every vertex $v\in N(S)$ satisfies $v\notin B$, so $d(v)<np+(1-\alpha)\sqrt{2np\log n}$. Writing $r=\sqrt{2\log n/(np)}$, by \Cref{lem:maxdegree} we have $\Delta\geq np+(1-o(1))\sqrt{2np\log n}=np(1+(1-o(1))r)$, and thus
$$\frac{d(v)}{\Delta}< \frac{1+(1-\alpha)r}{1+(1-o(1))r}\leq 1-\frac{(\alpha-o(1))r}{1+r}\leq 1-\varepsilon,$$
where $\varepsilon=\varepsilon(C)>0$, since $np\leq C\log n$ gives $r\geq \sqrt{2/C}\cdot(1+o(1))$, and thus $\varepsilon\geq \alpha\sqrt{2/C}/(2(1+\sqrt{2/C}))-o(1)>0$. For the lower bound, condition on the high-probability event from \Cref{lem:maxdegree} that $\Delta\leq np+2\sqrt{2np\log n}\leq 4np$ (using $np\geq 1.01\log n$), so $\Delta/20\leq np/5$. The same argument as in Item~3 then shows the expected number of edges $uv$ from $S$ to a vertex of degree less than $\Delta/20\leq np/5$ is at most
$$n^2p\cdot \exp(-\log n+O((\log\log n)^5))\cdot \bP[\mathrm{Bin}(n-2,p)<np/5]= o(1),$$
since the Chernoff lower-tail bound gives $\bP[\mathrm{Bin}(n-2,p)<np/5]\leq \exp(-\Omega(np))$. Thus whp $\Delta/20\leq d(v)\leq (1-\varepsilon)\Delta$ for all $v\in N(S)$.
\end{proof}

\begin{remark}\label{rem:bad vertices low degree}
  The second property above implies that every vertex
  \[
  x\in V(G)\setminus (B\cup S\cup N(S))
  \]
  satisfies
  \[
  d_G\bigl(x,\,B\cup N(B)\cup S\cup N(S)\cup N^2(S)\bigr)\leq 1.
  \]
  Indeed, if \(x\) had two neighbours in
  \(B\cup N(B)\cup S\cup N(S)\cup N^2(S)\), then, since
  \(x\notin B\cup S\cup N(S)\), each of these neighbours can be joined to a
  witness in \(B\cup S\) by a distinct path of length at most \(2\). Together with \(x\), these two witness paths
  produce either a path of length at most \(6\) with both endpoints in
  \(B\cup S\), or a cycle of length at most \(6\) containing a vertex of
  \(B\cup S\cup N(S)\), contradicting Item~\(2\).
  \end{remark}





\section{Extending a linear forest into a Hamilton cycle}

In this section we show how an appropriate linear forest in a random graph can
be extended into a Hamilton cycle. We start with some basic properties
typically satisfied by random graphs. 

\begin{defin}\label{def:C-expander}
For a real $C\geq 2$, a graph $H$ is a \emph{$C$-expander} if
\begin{enumerate}[label=\rm{(\roman*)}]
\item every set $X\subseteq V(H)$ with $|X|\leq |V(H)|/(2C)$ satisfies $|N_{H}(X)|\geq C|X|$; and
\item every two disjoint sets $A,B\subseteq V(H)$ with $|A|,|B|\geq |V(H)|/(2C)$ have at least one edge of $H$ between them.
\end{enumerate}
\end{defin}

\begin{lemma}\label{lem:gnp properties}
Let $G\sim G(n,p)$ and fix $\varepsilon>0$ and a constant $c_{0}\geq 2$. If $\log n/n \leq p \leq c_{0}\log n/n$,
then whp $G$ has the following properties.
\begin{enumerate}[label=\rm{(\alph*)}]
    \item\label{p:alpha joint}
    For all integers $a,b\geq 1$ with $abp\geq 10n$, every pair of disjoint
    sets $A,B\subseteq V(G)$ with $|A|=a$ and $|B|=b$ have at least one edge between them.
    \item\label{p:C-expander}
    Let $S\subseteq V(G)$ with $|S|\geq \varepsilon n$ and suppose
    $d_S(v)\geq (\log\log n)^{3}$ for every $v\in S$. Then $G[S]$ is a
    $\log\log n$-expander.
\end{enumerate}
\end{lemma}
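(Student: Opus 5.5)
Both items come from first-moment computations in $G(n,p)$ followed by a union bound; (b) additionally needs a short deterministic argument.

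\emph{Item \ref{p:alpha joint}.} Fix sizes $a,b$ with $abp\ge 10n$. The probability that two fixed disjoint sets $A,B$ of these sizes span no edge is $(1-p)^{ab}\le e^{-abp}\le e^{-10n}$. The number of pairs of disjoint sets is at most $4^n\cdot 4^n$, summed over all $a,b$. So the total failure probability is at most $n^2 16^n e^{-10n}=o(1)$.

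\emph{Item \ref{p:C-expander}, part (ii) of \Cref{def:C-expander}.} Write $C=\log\log n$. Two sets of size at least $|S|/(2C)\ge \eps n/(2\log\log n)$ satisfy
\[
abp\ge \eps^2n^2p/(4(\log\log n)^2)\ge \eps^2 n\log n/(4(\log\log n)^2)\ge 10n .
\]
Hence (ii) follows directly from item \ref{p:alpha joint} applied to subsets of $S$.

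\emph{Item \ref{p:C-expander}, part (i).} Let $X\subseteq S$ with $|X|\le |S|/(2C)$ and $|N_{G[S]}(X)|<C|X|$, and put $Y=N_{G[S]}(X)$.
\begin{itemize}
\item Every $v\in X$ has $d_S(v)\ge(\log\log n)^3$, and all these edges go inside $X\cup Y$.
\item So the set $U=X\cup Y$ satisfies $|U|\le (C+1)|X|$ and $e(G[U])\ge |X|(\log\log n)^3/2$.
\item Thus $G[U]$ has average degree at least $(\log\log n)^3/(2(C+1))\ge (\log\log n)^2/3$.
\end{itemize}
I split into two cases by the size of $U$.
\begin{itemize}
\item \emph{Small sets.} I would rule these out using the standard whp property that every $U$ with $|U|\le n/(\log n)^{1/2}$ spans fewer than $|U|\cdot(\log\log n)^2/10$ edges. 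Its first-moment proof goes as follows. For $|U|=u$ and $m=u\,D$ edges with $D=(\log\log n)^2/10$, the expected count is
\[
\binom nu\binom{u^2/2}{m}p^m\le\left[\frac{en}{u}\Big(\frac{eup}{2D}\Big)^{D}\right]^u .
\]
Since $up\le \log n\cdot c_0/\sqrt{\log n}$ is tiny compared with $D$ raised to a power, the bracket is $o(1/n)$-small for $u\ge 1$. (Very small $u$, at most about $D$, are handled trivially since $u$ vertices span at most $u^2/2$ edges.) Summing over $u$ gives $o(1)$.
\item \emph{Large sets.} If $|U|> n/\sqrt{\log n}$, then $|X|\ge n/(\sqrt{\log n}(C+1))$. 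The set $S\setminus U$ has size at least $|S|-(C+1)|S|/(2C)\ge |S|/3\ge \eps n/3$. Then
\[
|X|\cdot|S\setminus U|\cdot p\ge \eps n^2p/(4\sqrt{\log n}\log\log n)\gg 10n,
\]
so item \ref{p:alpha joint} gives an edge from $X$ to $S\setminus(X\cup Y)$. This contradicts the definition of $Y$.
\end{itemize}

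The main obstacle is the small-set density estimate. One must check that the exponent $(eup/2D)^D$ beats $en/u$ uniformly over $u$ up to $n/\sqrt{\log n}$. The crucial case is $u$ around $n/\sqrt{\log n}$, where $up\approx c_0\sqrt{\log n}$. There $D\log(2D/(eup))$ is only about $(\log\log n)^2\cdot(-\tfrac12\log\log n)$, which is negative, so the bound fails.

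To fix this I would lower the case threshold to $u\le n/(\log n)^{2}$, where $up\le c_0/\log n$. Then the bracket is at most $\frac{en}{u}(\log n)^{-D/2}$, which is still too weak for small $u$ because of the factor $n/u$. The estimate therefore needs $D$ of order $\log n/\log(1/(up))$, i.e.\ the standard bound $e(U)\le \max\{u\log n/\log\log n\cdot o(1),\,10\}$-type density lemma. I would therefore use the cleaner known bound: whp every $U$ with $|U|\le n/\log^2 n$ spans at most $4|U|$ edges. This follows from the same computation with $m=4u$, giving $[\frac{en}{u}(eup/8)^4]^u\le [e n u^3p^4]^u$, which is small for $u\le n/\log^2n$ (splitting at $u\le\sqrt n$ as usual). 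Since $4<(\log\log n)^2/6$, this rules out small $U$. The large case then needs only $|U|>n/\log^2n$, giving
\[
|X|\cdot|S\setminus U|\cdot p\ge \eps n^2p/(3\log^2 n\,(C+1)).
\]
This is not at least $10n$. So the contradiction in the large case must use $|Y|$ and $X$ more carefully; I expect balancing these two thresholds to be the real work.
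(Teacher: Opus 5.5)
Your item (a) and the joining part (ii) of (b) match the paper. Part (i) of the expansion, however, is not proved. As you note yourself, with the cutoff at $|U|=n/\log^2 n$ the large-set case only gives $|X|\,|S\setminus U|\,p\approx \varepsilon n/(\log n\log\log n)\ll 10n$. So sets $X$ with size between roughly $n/(\log^2 n\log\log n)$ and $n/\log n$ are handled by neither case.

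The missing idea is to keep the density $D=\Theta((\log\log n)^2)$ in the sparse-set estimate rather than a constant like $4$. With that density the estimate holds all the way up to $|U|\le y_0:=300n\log\log n/(\varepsilon\log n)$, which is exactly where the joinedness argument takes over.

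Your worry about small $u$ was unfounded. The bracket equals $en\bigl(\frac{ep}{2D}\bigr)^{D}u^{D-1}$, which is increasing in $u$ since $D>1$, so only $u=y_0$ matters. Your bound $(\log n)^{-D/2}$ threw away the factor $(up)^{D}$, which more than cancels $n/u$. At $u=y_0$ one has $en/y_0=O(\log n/\log\log n)$. With $D=(\log\log n)^2/8$ one also has $ey_0p/(2D)\le 1200ec_0/(\varepsilon\log\log n)$. Hence the bracket is at most $\log n\cdot(\log\log n)^{-\Omega((\log\log n)^2)}=o(1)$, and the sum over $u\ge 2$ of its $u$-th powers is $o(1)$. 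Your correct observation that the estimate breaks at $u\approx n/\sqrt{\log n}$ only says that $up$ must stay well below $D$; $up=O(\log\log n)$ is fine.

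Then split at $|X|=100n/(\varepsilon\log n)$, as the paper does.

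Below this threshold, $|U|<(1+\log\log n)|X|\le y_0$. So whp $e(G[U])<D|U|\le \tfrac14|X|(\log\log n)^3$, which contradicts $e(G[U])\ge\tfrac12|X|(\log\log n)^3$. Tiny $U$ are trivial here, or one can use the minimum degree directly as the paper does for $|X|\le(\log\log n)^2/2$.

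Above the threshold, $|S\setminus U|\ge |S|/3\ge\varepsilon n/3$, and
\[
|X|\,|S\setminus U|\,p\ \ge\ \frac{100n}{\varepsilon\log n}\cdot\frac{\varepsilon n}{3}\cdot\frac{\log n}{n}=\frac{100n}{3}\ \ge\ 10n .
\]
So (a) gives an edge from $X$ to $S\setminus(X\cup Y)$, contradicting the definition of $Y$.

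Thus the ``balancing'' you anticipated does not need a finer use of $X$ and $Y$. It is resolved by keeping the $(\log\log n)^2$ density, which pushes the sparse-set range up to order $n\log\log n/\log n$.
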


\begin{proof}
For \ref{p:alpha joint}, fix integers $a,b\geq 1$ and disjoint $A,B\subseteq V(G)$
of sizes $a,b$. Then
$\mathbb{P}[e(A,B)=0]=(1-p)^{ab}\leq e^{-pab}\leq e^{-10n}$.
The number of choices of $(a,b,A,B)$ is at most
$n^{2}\binom{n}{a}\binom{n}{b}\leq n^{2}\cdot 2^{2n}$,
and $n^{2}\cdot 2^{2n}\cdot e^{-10n}=o(1)$.

For \ref{p:C-expander}, we show that every $X\subseteq S$ with
$|X|\leq |S|/(2\log\log n)$ satisfies
$|N_{G[S]}(X)|\geq |X|\log\log n$.
We distinguish three cases.

\medskip\noindent\emph{Small sets: $|X|\leq (\log\log n)^{2}/2$.}
Pick any $v\in X$. By the minimum-degree assumption, $v$ has at least
$(\log\log n)^{3}$ neighbours in $S$, of which at most $|X|$ lie in $X$. Hence
\[
|N_{G[S]}(X)|\geq (\log\log n)^{3}-|X|\geq (\log\log n)^{3}/2\geq |X|\log\log n.
\]

\medskip\noindent\emph{Medium sets: $(\log\log n)^{2}/2<|X|\leq
\tfrac{100n}{\varepsilon\log n}$.}
We first show that whp every $Y\subseteq V(G)$ with
$|Y|\leq y_{0}:=\tfrac{300\,n\log\log n}{\varepsilon\log n}$
spans fewer than $\alpha|Y|$ edges in $G$, where
$\alpha:=(\log\log n)^{2}/8$.

By a union bound, the probability that some such $Y$ has at least $\alpha|Y|$
edges is at most
\begin{align*}
\sum_{2\leq y\leq y_{0}}\binom{n}{y}\binom{\binom{y}{2}}{\alpha y}p^{\alpha y}
\;\leq\;\sum_{2\leq y\leq y_{0}}\left(\frac{en}{y}\right)^{y}
\left(\frac{eyp}{2\alpha}\right)^{\alpha y}
\;=\;\sum_{2\leq y\leq y_{0}}B(y)^{y},
\end{align*}
where $B(y):=\dfrac{en}{y}\left(\dfrac{eyp}{2\alpha}\right)^{\alpha}$.
Factoring out the $y$-dependence,
\[
B(y)=\bigl[en\cdot(ep/(2\alpha))^{\alpha}\bigr]\cdot y^{\alpha-1},
\]
and since $\alpha>1$ for $n$ large, $B$ is increasing in $y$, so
$B(y)\leq B(y_{0})$ for all $y\in[2,y_{0}]$. Plugging in $y=y_{0}$ and using
$p\leq c_{0}\log n/n$,
\[
\frac{ey_{0}\,p}{2\alpha}\leq \frac{1200\,c_{0}\,e}{\varepsilon\log\log n},
\]
so we have
\[
B(y_{0})\;\leq\;\frac{e\varepsilon\log n}{300\log\log n}
\left(\frac{1200\,c_{0}\,e}{\varepsilon\log\log n}\right)^{(\log\log n)^{2}/8}=o(1).
\]
In particular $B(y_{0})<1/2$ for $n$ large, hence
\[
\sum_{2\leq y\leq y_{0}}B(y)^{y}\;\leq\;\sum_{y\geq 2}B(y_{0})^{y}
\;\leq\;\frac{B(y_{0})^{2}}{1-B(y_{0})}=o(1),
\]
proving the claim.

Now suppose for contradiction that some $X\subseteq S$ in the medium range
satisfies $|N_{G[S]}(X)|<|X|\log\log n$. Let $Y:=X\cup N_{G[S]}(X)$, so
\[
|Y|\leq |X|(1+\log\log n)\leq 2|X|\log\log n\leq
\frac{200\,n\log\log n}{\varepsilon\log n}\leq y_{0}.
\]
By the minimum-degree assumption,
$\sum_{v\in X}d_{S}(v)\geq |X|(\log\log n)^{3}$.
On the other hand, every edge incident to a vertex of $X$ with the other
endpoint in $S$ has that endpoint in $N_{G[S]}(X)\subseteq Y$, so
\[
\sum_{v\in X}d_{S}(v)
\;\leq\;2e_{G}(Y)
\;<\;2\alpha|Y|
\;\leq\;\frac{(\log\log n)^{2}}{4}\cdot 2|X|\log\log n
\;=\;\frac{|X|(\log\log n)^{3}}{2},
\]
a contradiction.

\medskip\noindent\emph{Large sets:
$\tfrac{100n}{\varepsilon\log n}<|X|\leq |S|/(2\log\log n)$.}
Suppose $|N_{G[S]}(X)|<|X|\log\log n$, and set
$B:=S\setminus(X\cup N_{G[S]}(X))$, so that $e_{G}(X,B)=0$ and, for $n$ large,
$|B|\geq |S|-|X|(1+\log\log n)\geq |S|-\frac{|S|(1+\log\log n)}{2\log\log n}\geq |S|/3\geq \varepsilon n/3$. Then
\[
|X|\cdot|B|\cdot p
\;\geq\;\frac{100n}{\varepsilon\log n}\cdot\frac{\varepsilon n}{3}\cdot\frac{\log n}{n}
\;=\;\frac{100n}{3}\;\geq\;10n,
\]
contradicting \ref{p:alpha joint}.

\medskip\noindent\emph{Connectivity property.} It remains to verify that every two disjoint
$A,B\subseteq S$ with $|A|,|B|\geq |S|/(2\log\log n)$ have at least one edge in $G$ between them.
Since $|A|,|B|\geq \varepsilon n/(2\log\log n)$,
\[
|A|\cdot|B|\cdot p
\;\geq\;\left(\frac{\varepsilon n}{2\log\log n}\right)^{2}\cdot \frac{\log n}{n}
\;\geq\;10n
\]
for $n$ large, and the claim follows from~\ref{p:alpha joint}.
\end{proof}

In the next subsection we present the main tool of this section, the extendability technique.

\subsection{The Friedman--Pippenger tree embedding technique with rollbacks}
\label{sec:FP}

Building on the foundational tree-embedding results of Friedman and
Pippenger~\cite{friedman1987expanding} and Haxell~\cite{haxell2001tree},
the Friedman--Pippenger technique (also known as the \emph{extendability
method}) enables the robust construction of linear-sized trees within
expander graphs. The central result we use is~\Cref{cor:3.12}, which builds
such trees within an \emph{$m$-joined} host graph.

The technique has played a key role in resolving several long-standing problems
in graph theory; see, for instance,
\cite{draganic2022rolling, montgomery2019spanning,draganic:24}.
We will employ the framework developed in~\cite{montgomery2019spanning}, with
further details below.

We recall the notion of an $(m,D)$-extendable embedding from~\cite{montgomery2019spanning}, and the auxiliary notion of an $m$-joined graph.

\begin{defin}\label{def:m-joined}
For $m\in\mathbb{N}$ with $m\geq 1$, a graph $G$ is \emph{$m$-joined} if every two disjoint sets $A,B\subseteq V(G)$ with $|A|,|B|\geq m$ have at least one edge of $G$ between them.
\end{defin}

\begin{defin}\label{def:goodness}
Let $m,D\in\mathbb{N}$ with $D\geq 3$ and $m\geq 1$, let $G$ be a graph, and
let $S\subseteq G$ be a subgraph. We say that $S$ is $(m,D)$-\emph{extendable}
if $S$ has maximum degree at most $D$ and
\begin{equation}\label{eq:extendable}
|(N_{G}(U)\cup U)\setminus V(S)|\;\geq\;(D-1)|U|-\sum_{x\in U\cap V(S)}(d_{S}(x)-1)
\end{equation}
for every set $U\subseteq V(G)$ with $|U|\leq 2m$. (Here $N_G(U)$ denotes the external neighbourhood of $U$ in $G$, so $(N_G(U)\cup U)\setminus V(S)$ counts the vertices of $U\setminus V(S)$ together with the neighbours of $U$ lying outside $V(S)$.)
\end{defin}

\begin{cor}[Corollary~3.12 in~\cite{montgomery2019spanning}]\label{cor:3.12}
Let $D,m,\ell\in\mathbb{N}$ satisfy $m\geq 1$ and $D\geq 3$, and set
\[
k=\left\lceil \frac{\log(2m)}{\log(D-1)}\right\rceil,
\]
suppose $\ell\geq 2k+1$. Let $G$ be an $m$-joined graph, and let $S$ be an
$(m,D)$-extendable subgraph of $G$ with at most
\[
|G|-10\,Dm-(\ell-2k-1)
\]
vertices. Suppose $a,b$ are two distinct vertices of $S$, both with degree at
most $D/2$ in $S$. Then there is an $a,b$-path $P$ of length $\ell$ whose
internal vertices lie outside $V(S)$, such that $S+P$ is $(m,D)$-extendable.
\end{cor}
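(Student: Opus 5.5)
The plan is to follow the Friedman--Pippenger/Haxell/Montgomery scheme. Corollary~\ref{cor:3.12} will come from two elementary operations on $(m,D)$-extendable subgraphs: adding a leaf and deleting a leaf. We then use the $m$-joined property to close up a path.

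\textbf{Step 1 (rollback).} If $S$ is $(m,D)$-extendable and $x$ is a leaf of $S$ with neighbour $y$, then $S-x$ is $(m,D)$-extendable. To check \eqref{eq:extendable} for a set $U$, compare the two sides. The left side can only grow, since $x$ leaves $V(S)$. On the right side, the term for $x$ was $d_S(x)-1=0$. The only other change is that $d(y)$ drops by one, which raises the right side by $1$, and this happens only if $y\in U$. In that case $x\in U\cup N_G(U)$, so the left side gains $1$ as well. Hence the inequality is preserved.

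\textbf{Step 2 (leaf extension, the main obstacle).} If $S$ is $(m,D)$-extendable with $|S|\le |G|-10Dm$ and $x\in V(S)$ has $d_S(x)<D$, then some $y\in N_G(x)\setminus V(S)$ makes $S+xy$ $(m,D)$-extendable. Call $U$ \emph{tight} if equality holds in \eqref{eq:extendable}. Adding $xy$ fails only if some $U$ with $|U|\le 2m$ becomes violated, and such a $U$ must have been tight or near tight and must contain $y$ or $x$ in a relevant way.

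I would first show that the union of two tight sets whose total size is at most $2m$ is again tight. This uses submodularity of $U\mapsto |(N(U)\cup U)\setminus V(S)|$ together with additivity of the right-hand side. From this there is a unique maximal tight set $U^*$. Next I would show $|U^*|<m$. Otherwise $U^*$, of size between $m$ and $2m$, and the set of vertices outside $V(S)\cup U^*\cup N(U^*)$ would both have size at least $m$. That set is large because $|S|\le|G|-10Dm$ and $|N(U^*)\setminus V(S)|\le (D-1)2m$. By $m$-joinedness there is an edge between them, contradicting the definition of $N(U^*)$. One then checks that any $y\in N_G(x)\setminus (V(S)\cup U^*\cup N(U^*))$ works. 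Such a $y$ exists because $x$ has more than $D-1$ fresh neighbours, by applying \eqref{eq:extendable} with $U=\{x\}$ and $U^*\cup\{x\}$ and counting. This bookkeeping is the delicate part, and I expect to reproduce Montgomery's counting carefully here.

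\textbf{Step 3 (building the path).} Set $k=\lceil \log(2m)/\log(D-1)\rceil$. Using Step~2 repeatedly, I would first attach to $a$ a path of length $\ell-2k-1$ outside $V(S)$, ending at a new vertex $a'$. The degree bound $d_S(a)\le D/2$ leaves room for this. Then from $a'$ and from $b$ I would grow trees $T_a,T_b$ of depth $k$, giving each internal vertex up to $D-1$ children and never exceeding degree $D$, again by Step~2. The vertex count stays within $|G|-10Dm$ because of the hypothesis on $|S|$. Each tree then has at least $(D-1)^{k}/2\ge m$ leaves at depth $k$. Since the two leaf sets are disjoint and of size at least $m$, $m$-joinedness gives an edge $uv$ between them. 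This yields an $a,b$-path of length exactly $(\ell-2k-1)+k+1+k=\ell$ with interior outside $V(S)$.

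Adding the single edge $uv$ between two vertices already present keeps extendability: both endpoints are leaves of degree below $D$, and the right-hand side is only affected if $u$ or $v$ lies in $U$, in which case the other is in $U\cup N(U)$ but already in $V(S)$. This step needs a short check, or it can be avoided by choosing $u,v$ via Step~2 with care. Finally, I would delete all tree vertices not on the path, leaf by leaf, using Step~1. The result is $S+P$, which is $(m,D)$-extendable.
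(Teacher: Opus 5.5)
\textbf{Overall.} Your route is the one in Montgomery's paper, which this paper cites without reproof: leaf deletion, a one-step leaf-extension lemma via tight sets and submodularity, a path of length $\ell-2k-1$ from $a$, two depth-$k$ trees whose leaf sets are joined by $m$-joinedness, then pruning. Step~1 is correct. Adding $uv$ is harmless for a simpler reason than yours: it does not change $V(S)$, so the left side of \eqref{eq:extendable} is unchanged while the right side can only decrease. There are two problems, one genuine and one in the bookkeeping.

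\textbf{The gap in Step~2.} Adding $xy$ lowers the left side for $U$ by $1$ exactly when $y\in U\cup N_G(U)$. It lowers the right side by $1$ exactly when $x\in U$. So the only sets that can become violated are tight $U$ with $x\notin U$ and $y\in U\cup N_G(U)$.

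Your $U^*$ is the maximal tight set overall, and it can contain $x$. For instance, $\{x\}$ is tight whenever $x$ has exactly $D-d_S(x)$ neighbours outside $V(S)$. This persists as you add children to $x$ while growing a tree. In that case $N_G(x)\subseteq U^*\cup N_G(U^*)$, so your candidate set for $y$ is empty, and counting with $U^*\cup\{x\}=U^*$ gives nothing. Also, $x$ is only guaranteed $D-d_S(x)\ge 1$ neighbours outside $V(S)$, not more than $D-1$.

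The fix is to let $U'$ be the union of all tight sets \emph{not containing} $x$.
\begin{itemize}
\item First show that every tight set of size at most $2m$ has size less than $m$. This must come before any talk of a maximal tight set, because union-closure needs the union to have size at most $2m$.
\item The family of tight sets avoiding $x$ is then closed under unions, so $U'$ is tight, $|U'|<m$ and $x\notin U'$.
\item Apply \eqref{eq:extendable} to $U'\cup\{x\}$ and subtract the equality for $U'$. This gives at least $D-d_S(x)\ge 1$ vertices in $N_G(x)\setminus\bigl(V(S)\cup U'\cup N_G(U')\bigr)$, and any such $y$ works.
\end{itemize}

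\textbf{The size bookkeeping in Step~3.} Once the path of length $\ell-2k-1$ is attached, you may already have $|S|=|G|-10Dm$. The trees then add more vertices, so the hypothesis $|S|\le|G|-10Dm$ of your Step~2 fails while you grow them. The hypothesis on the initial $|S|$ does not keep you within $|G|-10Dm$.

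What your Step~2 argument actually uses is $|G|-|S|\ge(2D+1)m$. A tight $U$ satisfies $|(U\cup N_G(U))\setminus V(S)|\le(D-1)|U|+|U\cap V(S)|\le 2Dm$. Note the bound is $D$ rather than $D-1$ per vertex, because isolated vertices of $S$ contribute $-1$ to the sum; this is exactly the situation for the edgeless $I$ used in this paper. Hence $V(G)\setminus(V(S)\cup U\cup N_G(U))$ has at least $m$ vertices, which is what $m$-joinedness needs.

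So state Step~2 with slack $(2D+1)m$. Since $(D-1)^{k-1}<2m$, a depth-$k$ tree in which every vertex has at most $D-1$ children has fewer than $2m(D-1)^2/(D-2)\le 2(D+1)m$ new vertices. Throughout the construction you therefore keep $|G|-|S|>10Dm-4(D+1)m=(6D-4)m\ge(2D+1)m$, and everything goes through. This is where the constant $10$ is spent.
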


Before proving our main result, we recall the definition of Hamilton-connectivity
and a recent Hamiltonicity theorem for $C$-expanders.

\medskip\noindent A graph is \emph{Hamilton-connected} if for every two
vertices of the graph there is a Hamilton path between them.

\begin{theorem}[\cite{draganic:24}]\label{thm:hamilton-connected}
For a sufficiently large constant $C$, every $C$-expander is Hamilton-connected.
\end{theorem}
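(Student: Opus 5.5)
Since this is the Hamiltonicity theorem for expanders from~\cite{draganic:24}, the plan follows the extendability framework of \Cref{sec:FP}, combined with an absorption step. Fix $a,b\in V(H)$ and write $n=|V(H)|$.

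\textbf{Step 1: set up the framework.} Property~(ii) of \Cref{def:C-expander} says that $H$ is $m$-joined with $m=n/(2C)$. Property~(i) controls the neighbourhoods of all sets of size at most $2m$. Choose $D$ as a large constant that is still much smaller than $C$, say $D\approx\sqrt C$. Then the empty subgraph, or the graph consisting of the two vertices $a,b$, is $(m',D)$-extendable for $m'$ a small multiple of $m$, because \eqref{eq:extendable} follows from $|N_H(U)|\ge C|U|$ when $U$ is small. For larger $U$ one instead uses that $H-V(S)$ still meets everything by $m$-joinedness. \Cref{cor:3.12} then lets us repeatedly join two low-degree vertices of the current extendable subgraph by paths of any prescribed length $\ell\ge 2k+1$, with $k=O(\log n/\log D)$, while preserving extendability, as long as $|V(S)|\le n-10Dm-\ell$.

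\textbf{Step 2: build an absorber.} Before covering, reserve a small vertex set $R$ and construct inside $H$ a flexible extendable structure $A$. This is a collection of $a$--$b$ "template" paths, arranged in the sorting-network style of Montgomery, with the following property: for any sufficiently small set $Z\subseteq R$, the structure $A$ contains an $a$--$b$ path using all of $V(A)$ together with exactly the vertices of $Z$. Each gadget is a bounded number of paths of length $\Theta(\log n)$, and the gadgets are attached one at a time via \Cref{cor:3.12}; extendability is preserved throughout, which is exactly what the corollary guarantees. The vertices of $R$ are chosen so that every vertex has many neighbours available to the gadgets, using expansion of small sets.

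\textbf{Step 3: cover almost everything and absorb the rest.} Extend the structure to a single long path $P$ by connecting successive endpoints through new vertices with \Cref{cor:3.12}, until only $O(Dm)$ vertices are uncovered. Since $m=n/(2C)$ and $C$ is large, this leftover $L$ is a small linear-sized set. Incorporate $L$ in two parts. First, greedily connect leftover vertices into short path segments attached to $P$, using Pósa-type rotations with rollbacks: delete recently added paths and re-embed them, as allowed by the extendability method. Second, send the last few vertices into the absorber, where the template structure finishes them off and produces a Hamilton $a$--$b$ path.

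\textbf{Main obstacle.} I expect the absorption step to be the hardest. In a $C$-expander the minimum degree may be only about $C$, so there is no degree slack for local absorbers. The leftover set can also be adversarial, for example lying inside the neighbourhood of a few vertices. The plan to overcome this is to make the absorber global rather than local. Use property~(ii) to guarantee connections between any two linear-sized sets. Use a sorting-network-type linkage so that the absorber can route around any small leftover configuration. Keep the whole construction $(m,D)$-extendable at every stage so that \Cref{cor:3.12} can always be reapplied after a rollback.
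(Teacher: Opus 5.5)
The paper gives no proof of this statement: it is quoted from \cite{draganic:24} and used only as a black box in Step~4 of \Cref{lem:linforestextension}. Your proposal therefore has to stand on its own, and as written it does not. Step~1 is essentially fine, with two small corrections. First, property~(i) of \Cref{def:C-expander} only controls sets of size at most $n/(2C)$, so sets of size up to $2m$ must be handled via property~(ii). Second, the parameter $m'$ in \Cref{cor:3.12} is also the joinedness parameter, so it cannot be a smaller multiple of $n/(2C)$. However, \Cref{cor:3.12} can only be applied while $|V(S)|\le n-10Dm-(\ell-2k-1)$. So Steps~1--2 plus the corollary can only give an $a$--$b$ path, with whatever gadgets you attach, that misses about $10Dm=\Theta(Dn/C)$ vertices, which is linearly many. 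Everything that turns this into a Hamilton $a$--$b$ path is asserted rather than proved.

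The concrete gap is the first half of Step~3. Your absorber only swallows \emph{small subsets of the reserved set $R$}. The leftover $L$ at the end of the extendability phase, by contrast, is a linear-sized set whose location is dictated by $H$, not by you. Before the absorber can act, you must cover all of $L\setminus R$ and all but a few vertices of $R$, and \Cref{cor:3.12} is no longer available for this. This is exactly the hard part of the theorem. A vertex $v\in L$ may have degree only about $C$, with all of its neighbours being interior vertices of $P$ or of $A$, so there is no expansion left in $H-V(P)$ through which to route $v$. P\'osa rotations do not fix this. They make a linear-sized set of vertices into endpoints, but give no control over whether one of the few neighbours of a \emph{specific} $v$ becomes an endpoint. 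Moreover, rotations or rollbacks passing through $V(A)$ may destroy the absorber and the extendability invariant you intend to keep.

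The same issue undermines the claim that $R$ can be chosen so that every vertex has many neighbours usable by the gadgets. Degrees can be as small as $C$, and a random reservoir of density $\varepsilon$ misses the whole neighbourhood of a degree-$C$ vertex with probability about $e^{-\varepsilon C}$. Your ``Main obstacle'' paragraph identifies the right difficulty, but ``make the absorber global'' and ``use a sorting-network linkage'' are a plan, not an argument. What is missing is an actual mechanism for covering the last linear-sized, adversarially placed set of possibly low-degree vertices, and that is where the real work of \cite{draganic:24} lies.
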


\medskip\noindent The following result is the key tool which allows us to extend
a given linear forest $F$ into a Hamilton cycle in a random graph, under the
condition that every vertex of the random graph has many neighbours outside
of $V(F)$.

\begin{lemma}\label{lem:linforestextension}
Fix constants $C\geq 2$ and $K>0$, let $\log n/n\leq p\leq C\log n/n$, and let $G\sim G(n,p)$. With high probability the
following holds. Suppose $F\subseteq G$ is a nonempty linear forest with no isolated vertices, and such that $S:=V(G)\setminus V(F)$ satisfies $|S|\geq Kn$ and
$|N_{S}(v)|\geq 100(\log\log n)^{3}$ for every $v\in V(G)\setminus V(F)$ and
every endpoint $v$ of a path of $F$.
Then there exists a Hamilton cycle in $G$ which covers~$E(F)$.
\end{lemma}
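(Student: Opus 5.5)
Since $F$ is a linear forest with no isolated vertices, it has paths $P_1,\dots,P_t$ with endpoints $a_i,b_i$. The plan is to link them through the reservoir $S$ into one path, then close it up with a Hamilton path of $G[S]$.

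\textbf{Step 1: Hamilton-connectivity of the reservoir.} First I trim $S$ so that it is a strong expander. The hypothesis gives $d_S(v)\ge 100(\log\log n)^3$ for every $v\in S$. I plan to reserve a random subset $S_1\subseteq S$ of size $|S|/2$, so that whp every vertex of $S$ and every endpoint of $F$ still has at least $(\log\log n)^3$ neighbours in $S_1$ and in $S_2:=S\setminus S_1$.

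This needs a union bound that is uniform over all forests $F$. The cleanest route is to partition $V(G)$ in advance, before fixing $F$. A vertex with $d_S\ge 100(\log\log n)^3$ loses only $e^{-\Omega((\log\log n)^3)}$ of its mass under random splitting. Since this probability is not $n^{-\omega(1)}$, I may instead need a deterministic greedy split, for instance via the Lov\'asz Local Lemma: each vertex's condition depends on its at most $O(\log n)$ neighbours, so the local lemma applies.

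By \Cref{lem:gnp properties}\ref{p:C-expander} with $\varepsilon=K/2$, $G[S_2]$ is then a $\log\log n$-expander. Hence $G[S_2]$ is Hamilton-connected by \Cref{thm:hamilton-connected}.

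\textbf{Step 2: Linking the paths via extendability.} Let $H:=G-S_2$, a graph containing $F\cup G[S_1]$. By \Cref{lem:gnp properties}\ref{p:alpha joint}, $G$ (and hence $H$) is $m$-joined with $m=\sqrt{10n/p}=O(n/\sqrt{\log n})$. Take $D=\log\log n$, so that $k=O(\log n/\log\log\log n)$ and $\ell=2k+1$.

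The core claim is that $F$ is $(m,D)$-extendable in $H$, once we also mark the endpoints. The idea is to treat $S_1$ as the fresh vertices: each $v$ in $V(F)$ or in $S_1$ has many $S_1$-neighbours. Then, as in the expander argument of \Cref{lem:gnp properties}, small sets $U$ with $|U|\le 2m$ expand by a factor of at least $D$ into $S_1$. Interior vertices of $F$ have $d_F=2$, so they contribute $1$ to the correction term.

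I expect this to be the main obstacle. Interior vertices of $F$ have no guaranteed neighbours in $S$, so \eqref{eq:extendable} may fail for $U$ consisting of many interior vertices. To fix this, I will apply extendability to the graph $S^\ast$ obtained from $F$ by contracting each path to an edge $a_ib_i$. Equivalently, I work in $H'=G[S_1\cup End(F)]$, where only endpoints appear; there each $a_i,b_i$ has degree $1$ in $S^\ast$ and many $S_1$-neighbours. Extendability of the matching $\{a_ib_i\}$ then follows from the medium/large-set expansion estimates, with $m$-joinedness covering sets of size near $m$, since $|S_1|\ge Kn/2\gg 10Dm$.

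Applying \Cref{cor:3.12} repeatedly, I join $b_i$ to $a_{i+1}$ by paths of length $\ell$ through $S_1$ for $i=1,\dots,t-1$. Each application keeps extendability and uses $O(\ell)$ new vertices. If $t$ is large, the size condition $|S^\ast|\le |H'|-10Dm-\ell$ could fail; I would handle this by choosing $S_1$ of size $(1-\eta)|S|$ and noting that $t\ell\le$ the available space. Otherwise, I first link greedily using \Cref{lem:gnp properties}\ref{p:alpha joint} when $t$ is very large.

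\textbf{Step 3: Closing the cycle.} The result is a single path $Q$ from $a_1$ to $b_t$ that covers $E(F)$ and uses part of $S_1$. I then pick neighbours $x\in N_{S_2}(a_1)$ and $y\in N_{S_2}(b_t)$ with $x\ne y$. The unused vertices $S_1\setminus V(Q)$ are absorbed into $S_2$ beforehand: they have many $S_2$-neighbours, so $G[S_2\cup(S_1\setminus V(Q))]$ is still an expander by \Cref{lem:gnp properties}\ref{p:C-expander}. Finally, a Hamilton $x$–$y$ path in this graph, from \Cref{thm:hamilton-connected}, closes $Q$ into a Hamilton cycle of $G$ that covers $E(F)$.
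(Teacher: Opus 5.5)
Your overall architecture (split the reservoir, join the paths of $F$ by iterating \Cref{cor:3.12} on the set of endpoints, then close up with a Hamilton path from \Cref{thm:hamilton-connected}) matches the paper's. However, Step~2 has a genuine gap: the size hypothesis of \Cref{cor:3.12} fails when $F$ has many paths. Each application creates a path of length $\ell=2k+1$ with $k=\lceil\log(2m)/\log(D-1)\rceil$. With your parameters this is $\Theta(\log n/\log\log\log n)$. No admissible $D$ makes it substantially shorter, because $D$ is capped by the guaranteed number of reservoir neighbours of an endpoint, which may be only $O((\log\log n)^3)$. The number $t$ of paths can be $\Theta(n)$, so your claim that $t\ell$ fits into the available space is false. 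Taking $|S_1|=(1-\eta)|S|$ cannot help, since at most $n$ vertices exist. Your fallback, greedy linking by \Cref{lem:gnp properties}\ref{p:alpha joint}, only works while two endpoint sets $T_1,T_2$ from distinct paths satisfy $|T_1||T_2|p\ge 10n$. So it stalls at $t=\Theta(\sqrt{n/p})=\Theta(n/\sqrt{\log n})$, where still $t\ell=\Theta(n\sqrt{\log n}/\log\log\log n)\gg n$.

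\textbf{The missing step.} You need an intermediate phase that merges paths at a cost of only one reservoir vertex per merge, bringing their number down to $O(n/\log n)$. This is the paper's Step~2:
\begin{itemize}
\item Take disjoint endpoint sets $R_1,R_2$ of size $\Theta(n/\log n)$, from distinct paths.
\item By \ref{p:alpha joint}, applied against linear-sized subsets, each $R_i$ is adjacent to more than half of the unused part of $S_1$.
\item Hence some unused $w\in S_1$ is adjacent to both, and $a\,w\,b$ merges two paths.
\end{itemize}
Only then do the remaining $L=O(n/\log n)$ connections go through \Cref{cor:3.12}, at total cost $L\ell=O(n/\log\log\log n)=o(n)$, even with your $D$. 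Run that phase in a fresh part of the reservoir; the paper splits $S$ into three parts. Otherwise the single-vertex merges may exhaust the $S_1$-neighbourhoods of some endpoints and destroy extendability.

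\textbf{Smaller points.}
\begin{itemize}
\item \Cref{cor:3.12} needs $S^\ast$ to be a subgraph of the host graph. Either add the edges $a_ib_i$ to $H'$, which is harmless since the new paths have interiors outside $V(S^\ast)$, or use the edgeless graph on $End(F)$ as the paper does.
\item The symmetric local lemma alone does not give $|S_1|,|S_2|=\Omega(n)$. The paper obtains this from the quantitative bound $\exp(-O(ne^{-c(\log\log n)^3}))$, which beats the $e^{-\Omega(n)}$ failure probability of the size event.
\item The split is chosen after $F$ is fixed, so no uniformity over $F$ is needed. A partition fixed in advance would not work, since $N_S(v)$ depends on $F$.
\end{itemize}
With these repairs, your Step~3 closing argument is fine.
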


\begin{proof}
We assume that $G$ satisfies the properties of~\Cref{lem:gnp properties} (applied with the constant $c_{0}:=C$), together with the standard whp event $\Delta(G)=O(\log n)$ given by~\Cref{lem:maxdegree}, and
set $\varepsilon:=K^{2}/100$. Throughout, the
degree condition $|N_{S}(v)|\geq 100(\log\log n)^{3}$ will only be invoked
for $v\in S$ and for endpoints of paths in $F$.

Split $S$ uniformly at random into three parts $S_{1},S_{2},S_{3}$, each vertex
assigned independently and uniformly. For each vertex $v$ to which the hypothesis applies and each $i\in\{1,2,3\}$, let $E_{v,i}$ be the event that $|N_{G}(v)\cap S_{i}|< |N_{S}(v)|/10$. By Chernoff, $\bP[E_{v,i}]\leq \exp(-c(\log\log n)^{3})=:q$ for an absolute constant $c>0$. The event $E_{v,i}$ depends only on the random choices for the neighbours of $v$, so $E_{v,i}$ is mutually independent of all events $E_{v',j}$ with $v'$ at distance more than $2$ from $v$ in $G$; in particular, of all but at most $d:=3\Delta(G)^{2}=O(\log^{2}n)$ other events (using $\Delta(G)=O(\log n)$). Since $e\cdot q\cdot (d+1)=o(1)<1$, the Lov\'asz Local Lemma~(see Ch.~5 in~\cite{AS16}) applies and, since $1-eq\geq e^{-2eq}$, gives the quantitative bound
\[
\bP\Bigl[\bigcap_{v,i}\overline{E_{v,i}}\Bigr]\geq (1-eq)^{N}\geq \exp(-2eqN)=\exp(-O(nq)),
\]
where $N\leq 3n$ is the number of events. Separately, by Chernoff, the size event $\{|S_{i}|\geq |S|/4\geq Kn/4\text{ for every }i\}$ fails with probability at most $\exp(-\Omega(n))$. Since $nq=n\exp(-c(\log\log n)^{3})\ll n$, we have $\exp(-O(nq))\gg \exp(-\Omega(n))$, so a union bound gives that with positive probability both the LLL event and the size event occur simultaneously. Fix any such partition. In particular, $|S_{i}|\geq Kn/4$ for $i=1,2,3$.

\smallskip\noindent\textbf{Step 1: reduce to at most $\varepsilon n$ components.}
If $F$ has more than $\varepsilon n$ components, choose $T\subseteq V(F)$
containing exactly one endpoint from each path of $F$, so $|T|\geq \varepsilon n$.
Partition $T$ arbitrarily into two halves $T_{1},T_{2}$ with
$|T_{1}|,|T_{2}|\geq \varepsilon n/2$. Then
$|T_{1}|\cdot|T_{2}|\cdot p\geq (\varepsilon n/2)^{2}\cdot(\log n)/n\geq 10n$
for $n$ large, so by \ref{p:alpha joint} there is an edge between $T_{1}$ and
$T_{2}$; adding this edge to $F$ reduces the number of components by one.
Repeat until $F$ has at most $\varepsilon n$ components.

\smallskip\noindent\textbf{Step 2: reduce to at most $\tfrac{100n}{\varepsilon\log n}$
components, using $S_{1}$.}
At each step we update $F$ to a linear forest with one fewer component and one
additional vertex (drawn from $S_{1}$). Choose disjoint sets
$R_{1},R_{2}\subseteq V(F)$ of endpoints, each of size
$\tfrac{50n}{\varepsilon\log n}$, so that $R_{1}\cup R_{2}$ contains at most one endpoint of any single path of $F$ (this is possible whenever $F$ has at least $\tfrac{100n}{\varepsilon\log n}$ paths, by picking one endpoint from each of $|R_1|+|R_2|$ distinct paths and partitioning). Note that
$|S_{1}\setminus V(F)|\geq |S_{1}|-\varepsilon n\geq \varepsilon n$, since at
each previous step we add at most one vertex to $V(F)$ and Step~1 left $F$
with at most $\varepsilon n$ components. For any $W\subseteq S_{1}\setminus V(F)$
with $|W|\geq \varepsilon n/2$, we have
\[
|R_{i}|\cdot|W|\cdot p
\;\geq\;\frac{50n}{\varepsilon\log n}\cdot\frac{\varepsilon n}{2}\cdot\frac{\log n}{n}
\;=\;25n\;\geq\;10n,
\]
so by \ref{p:alpha joint} each $R_{i}$ has a neighbour in $W$. Hence each
$R_{i}$ is adjacent to more than half of $S_{1}\setminus V(F)$, and by
pigeonhole there is a vertex $w\in S_{1}\setminus V(F)$ adjacent to both
$R_{1}$ and $R_{2}$. Since the two endpoints picked up by $w$ come from
distinct paths (as $R_{1}\cup R_{2}$ has at most one endpoint per path),
joining $w$ via these two edges merges two paths into one,
reducing the component count by one.

\smallskip\noindent\textbf{Step 3: turn the linear forest into a single path,
using $S_{2}$.}
After Steps 1--2, we have a linear forest $F_{E}$ covering $F$ with
$V(F_{E})\subseteq V(F)\cup S_{1}$ and at most
$\tfrac{100n}{\varepsilon\log n}$ components. Order its paths as
$P_{1},\dots,P_{L}$, and write $a_{i},b_{i}$ for the endpoints of $P_{i}$.
Let $I$ be the set of all such endpoints, regarded as a subgraph with no
edges (so $d_{I}(x)=0$ for $x\in I$). Set $G':=G[I\cup S_{2}]$. We connect
$b_{i}$ to $a_{i+1}$ by vertex-disjoint paths in $G'$ via repeated application
of~\Cref{cor:3.12}, with parameters $m=\lfloor n/(2\log\log n)\rfloor$ and $D=\lfloor\log\log\log n\rfloor$.

We first check that $G'$ is $m$-joined. For any two disjoint sets $A,B\subseteq V(G')\subseteq V(G)$ with $|A|,|B|\geq m$, we have $|A|\cdot|B|\cdot p\geq m^{2}p\geq \frac{n^{2}}{4(\log\log n)^{2}}\cdot\frac{\log n}{n}=\frac{n\log n}{4(\log\log n)^{2}}\geq 10n$ for $n$ large, so by~\Cref{lem:gnp properties}\ref{p:alpha joint} there is an edge between $A$ and $B$ in $G$ (hence in $G'$, since $A,B\subseteq V(G')$).

\emph{$I$ is $(m,D)$-extendable in $G'$.}
We need to verify, for every $X\subseteq V(G')$ with $|X|\leq 2m$, that
\[
|(N_{G'}(X)\cup X)\setminus V(I)|\;\geq\;(D-1)|X|-\sum_{x\in X\cap V(I)}(d_{I}(x)-1)
\;=\;(D-1)|X|+|X\cap V(I)|.
\]
It suffices to show $|N_{S_{2}}(X)|\geq |X|D$, since $N_{S_{2}}(X)\cap V(I)=\emptyset$ implies $N_{S_{2}}(X)\subseteq (N_{G'}(X)\cup X)\setminus V(I)$, and $|X|D\geq (D-1)|X|+|X|\geq (D-1)|X|+|X\cap V(I)|$.

Fix such an $X$ and set $G^{*}:=G'[X\cup S_{2}]$. Note that
$V(G^{*})\setminus X = S_{2}\setminus X$, so $N_{G^{*}}(X)\subseteq S_{2}$ and
$|N_{S_{2}}(X)|=|N_{G^{*}}(X)|$. Every vertex of $V(G^{*})$ has at least
$10(\log\log n)^{3}\geq (\log\log n)^{3}$ neighbours in $S_{2}\subseteq V(G^{*})$;
since $|V(G^{*})|\geq |S_{2}|\geq Kn/4\geq \varepsilon n$ (using $\varepsilon\leq K/4$),
\Cref{lem:gnp properties}\ref{p:C-expander} applied to $V(G^{*})$ shows that
$G^{*}$ is a $\log\log n$-expander.

\begin{itemize}
  \item If $|X|\leq |V(G^{*})|/(2\log\log n)$, then the expansion property of
  $G^{*}$ gives $|N_{S_{2}}(X)|=|N_{G^{*}}(X)|\geq |X|\log\log n\geq |X|D$.
  \item If $|V(G^{*})|/(2\log\log n)<|X|\leq 2m$, then by the joining property
  of $G^{*}$, any subset of $V(G^{*})\setminus(X\cup N_{G^{*}}(X))$ of size at
  least $|V(G^{*})|/(2\log\log n)$ has an edge towards $X$ . Hence
  \[
  |N_{S_{2}}(X)|=|N_{G^{*}}(X)|
  \;\geq\;|V(G^{*})|-|X|-\frac{|V(G^{*})|}{2\log\log n}
  \;\geq\;\frac{Kn}{4}-\frac{2n}{\log\log n}\;\geq\;|X|D
  \]
  for $n$ sufficiently large (using $D=o(\log\log n)$ and $K>0$ fixed).
\end{itemize}

\emph{Iterating~\Cref{cor:3.12}.}
The number of $(b_{i},a_{i+1})$-connections to find is $L-1\leq \tfrac{100n}{\varepsilon\log n}$. At each step we invoke~\Cref{cor:3.12} with path-length parameter $\ell=2k+1$ where $k=\lceil\log(2m)/\log(D-1)\rceil$, so $\ell-2k-1=0$ and each path produced has length $\ell=2k+1=o(\log n)$. Thus the total number of internal vertices used over all steps is at most $L\cdot \ell=o(n)$, so at each step the current extendable subgraph has at most $|I|+o(n)=o(n)\leq |G'|-10\,Dm$ vertices (and hence at most $|G'|-10\,Dm-(\ell-2k-1)$ vertices), so~\Cref{cor:3.12} applies.

\smallskip\noindent\textbf{Step 4: extend the spanning path to a Hamilton cycle,
using $S_{3}$.}
Steps 1--3 produce a path $P\subseteq V(G)\setminus S_{3}$ that covers $E(F)$;
let $x,y$ be its endpoints, and write $W:=V(P)\setminus\{x,y\}$. The graph $G'':=G-W$ has vertex set $V(G)\setminus W\supseteq S_{3}\cup\{x,y\}$ (and may additionally contain vertices of $S_{1}\cup S_{2}$ that were not used in Steps~2--3). The endpoints $x,y$ remain endpoints of paths of the original forest $F$, since each of Steps~1--3 either adds edges between original endpoints or extends paths with internal vertices, never producing a new path endpoint; in particular the lemma's degree hypothesis applies to $x$ and $y$. Every other vertex $v\in V(G'')$ lies in $S=V(G)\setminus V(F)$, so the hypothesis also applies to $v$. Combined with the splitting at the start of the proof, this gives $|N_{G}(v)\cap S_{3}|\geq 10(\log\log n)^{3}$ for every $v\in V(G'')$, hence $v$ has at least $10(\log\log n)^{3}\geq (\log\log n)^{3}$ neighbours inside $V(G'')\supseteq S_{3}$. By \Cref{lem:gnp properties}\ref{p:C-expander}, $G''$ is a $\log\log n$-expander, hence is Hamilton-connected by~\Cref{thm:hamilton-connected}, and so contains a Hamilton path from $x$ to $y$. Uniting this path with $P$ gives a Hamilton cycle of $G$ covering $F$.
\end{proof}

\section{The sparse range}\label{sec:sparse-range}

In this section we prove our main result.

\begin{theorem}\label{thm:sparse-range}
Let $C>0$ and let $\omega(n)$ be any function tending to infinity arbitrarily slowly. Let $p$ be such that
\[
\frac{\log n+\log\log n+\omega(n)}{n}\leq p\leq \frac{C\log n}{n}.
\]
Then $G(n,p)$ has a tight Hamilton cover whp.
\end{theorem}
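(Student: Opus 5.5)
We follow the outline from the introduction. Set $k:=\lceil\Delta(G)/2\rceil$ and work on the whp event where \Cref{prop:bad vertices}, \Cref{lem:maxdegree}, \Cref{lem:gnp properties} and the conclusion of \Cref{lem:linforestextension} all hold, together with $\delta(G)\geq 2$. Let $B,S$ be as in \Cref{prop:bad vertices} and $W:=B\cup S\cup N(S)$.

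\emph{Step 1: a typical reservoir split.} Fix a large constant $t=t(C)$ and partition $V(G)$ uniformly at random into classes $V_1,\dots,V_t$. Assign each edge of $G-W$ to an index $i$ such that neither endpoint lies in $V_i$; choose $i$ uniformly among the admissible indices. Call the resulting graphs $H_1,\dots,H_t$. By Chernoff bounds and the local lemma (using $\Delta=O(\log n)$, as in the proof of \Cref{lem:linforestextension}), we obtain the following with positive probability. Every vertex outside $S$ has at least $(\log\log n)^4/(2t)\geq 100(\log\log n)^3$ neighbours in each $V_i$. Moreover, $\Delta(H_i)\leq (1+\eta)(1-\alpha')\Delta/t$, where $\alpha'>0$ comes from $\Delta(G-W)\leq (1-\varepsilon)\Delta$ (items 3 and 4 of \Cref{prop:bad vertices} and the definition of $B$). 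Here we must check that per-vertex degrees of order $\log n$ concentrate; vertices of degree $\ll\log n$ are harmless. Applying \Cref{thm:approxLAC} to each $H_i$ gives at most $(1-\alpha'/2)k/t$ linear forests avoiding $V_i$. The total number of typical forests is then $(1-\alpha'/2)k$, which leaves a slack of $\Omega(k)$.

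\emph{Step 2: exceptional forests.} We build $k$ linear forests $E_1,\dots,E_k$ covering all edges incident to $B\cup S\cup N(S)$. For $b\in B$, split $\partial(b)$ into $k$ cherries (pairs of edges), repeating an edge only when $d(b)<2k$ forces it. For $s\in S$, every $E_j$ must contain a path through $s$ using two edges of $\partial(s)$. The edges from $N(s)$ to $N^2(s)$ must also be covered: each $v\in N(s)$ has degree $\Theta(\log n)\leq(1-\varepsilon)\Delta$, so these edges are distributed among the $k$ forests as short paths through $v$ ending in $N^2(s)$. Item 2 of \Cref{prop:bad vertices} (no short paths or cycles) makes these local gadgets vertex-disjoint trees, hence linear forests, for distinct exceptional vertices. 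Their endpoints lie in $N(B)\cup N^2(S)$, and these have many neighbours in every $V_i$.

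\emph{Step 3: merging.} Pair the typical forests in $H_i$ with exceptional forests $E_j$ such that the $E_j$'s vertex set avoids $V_i$. To make this possible, we choose the gadgets so that each forest $E_j$ is tagged by a class $i(j)$ and is built only from vertices outside $V_{i(j)}$. This is the delicate point: a low-degree $s$ may lie in $V_i$. We handle it by restricting the reservoir to $V_i\setminus(W\cup N(W))$, which is still linear-sized. In the union, whenever a typical edge conflicts with the exceptional forest at a vertex (degree $>2$ or a cycle), delete the typical edge. By \Cref{rem:bad vertices low degree}, each vertex outside $W$ sees at most one vertex of $B\cup N(B)\cup S\cup N(S)\cup N^2(S)$, so the deleted edges form a graph $H_2$ of bounded maximum degree. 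Add $H_2$ back using \Cref{lem:merginglinfor}, applied to the unpaired exceptional forests (there are $\Omega(k)\gg 4d\ell$ of them). We then add isolated vertices as needed so that each forest has no isolated vertices, or else simply regard them as part of $S$.

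\emph{Step 4: closing.} Each of the $k$ linear forests $F_j$ avoids a linear-sized reservoir $V_{i}$. Every endpoint and every vertex outside $V(F_j)$ has $100(\log\log n)^3$ neighbours in it. Vertices of $S$ are internal to gadget paths, so they are never endpoints and never uncovered. \Cref{lem:linforestextension} therefore extends each $F_j$ to a Hamilton cycle, giving the required $k$ cycles.

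The main obstacle is Step 3: guaranteeing that the merged forests stay linear with a reservoir avoided by both the typical and the exceptional parts, while the leftover graph has bounded degree. Bounded degree is needed to keep \Cref{lem:merginglinfor} applicable with only $\Omega(k)$ spare forests.
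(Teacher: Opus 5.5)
Your proposal follows the paper's proof essentially step for step. The shared steps are: the random vertex split, with each edge of $G-W$ assigned to a class containing neither endpoint; cherries at $B$ and local gadgets through $\{s\}\cup N(s)$ ending in $N^2(s)$; deletion of conflicting typical edges; the merging lemma using the $\Omega(k)$ spare exceptional forests; and the extension lemma with reservoir $V_i\setminus(W\cup N(W))$. As you note yourself, this reservoir has to replace the requirement that $E_j$ avoid $V_i$, which is impossible because every $E_j$ contains all of $B$.

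One detail needs tightening. Do the final merge class by class, as the paper does: split the leftover graph into the pieces $H_2\cap H_i$, split the spare forests into $t$ groups of size $\Omega(k/t)\gg 4d\ell$, and merge each piece into its own group. Otherwise a spare forest may pick up leftover edges from several $H_i$, and your Step~4 claim that it avoids some reservoir $V_i\setminus(W\cup N(W))$ is no longer justified.
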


\begin{proof}
Consider $G\sim G(n,p)$ with $(\log n + \log\log n + \omega(n))/n\leq p\leq C\log n/n$. We will show that whp $G$ can be covered by
$\lceil\Delta(G)/2\rceil$ Hamilton cycles. We choose $t:=\lceil 24/\alpha\rceil$ below (with $\alpha$ defined in the next paragraph) and set $K:=1/(3t)$, both positive constants depending only on $C$. Henceforth we fix $G$ satisfying all
the properties of~\Cref{lem:maxdegree} and~\Cref{prop:bad vertices}. We additionally condition on two further whp events: $\delta(G)\geq 2$ (which holds since $p\geq (\log n+\log\log n+\omega(n))/n$, see e.g.~\cite{bollobas1998random}); and the event that, whenever $np\geq 1.01\log n$, no vertex of $G$ has degree less than $(\log\log n)^{4}$. For this last event, a Poisson-type Chernoff bound gives $\bP[d(v)<(\log\log n)^{4}]\leq (e\cdot np/(\log\log n)^{4})^{(\log\log n)^{4}}e^{-np}\leq n^{-1.01+o(1)}$, so a union bound yields probability $o(1)$.

Fix $\alpha:=\alpha(C)>0$ small enough so that $(\tfrac{1}{10}-\alpha)\sqrt{2/C}\geq 2\alpha$ holds (e.g.\ $\alpha:=1/(10^{4}\sqrt{C})$), and set $t:=\lceil 24/\alpha\rceil$ (so $t\geq 24/\alpha$ and the bound $t(1-2\alpha/3)/(t-2)\leq 1-\alpha/2$ required below holds). Let $\varepsilon$ be a constant with $\varepsilon\ll \min(\alpha, 1/C)$, and set $k:=\lceil\Delta(G)/2\rceil$.
Let $B$ denote the set of vertices $v$ with $d(v)\geq (1-\alpha)\Delta(G)$, and
let $S$ denote the set of vertices with $d(v)<(\log\log n)^{4}$. Note that for $n$ large, by~\Cref{lem:maxdegree} we have $\Delta(G)\geq pn+(1-o(1))\sqrt{2pn\log n}$, so
\begin{align*}
(1-\alpha)\Delta(G)&\geq (1-\alpha)pn+(1-\alpha-o(1))\sqrt{2pn\log n}\\
&\geq pn+(1-1/10)\sqrt{2pn\log n}
\end{align*}
holds for $n$ large. Hence $B$ is contained in the set $B_{\ref{prop:bad vertices}}$ from~\Cref{prop:bad vertices}; similarly $S=S_{\ref{prop:bad vertices}}$. All conclusions of~\Cref{prop:bad vertices} for $B_{\ref{prop:bad vertices}}, S_{\ref{prop:bad vertices}}$ therefore apply to $B,S$. Write
$W:=S\cup B\cup N(S)$.


\subsection{Decomposing $G-W$ into linear forests}

In the rest of the subsection, we find a collection $\cF_M$ of linear forests which cover \( G - W \) and have properties which are convenient for extending them later into Hamilton cycles.
The following lemma allows us to partition the edges of \( G - W \) into a constant number of subgraphs \( G_i \), ensuring that each subgraph has approximately the same maximum degree. Additionally, each vertex \( v \in V(G_i) \) retains a lot of neighbors in the set \( V(G) \setminus V(G_i) \).
\begin{lemma}\label{lem:linear forests with reservoirs}
The edges of the graph $G-W$ can be partitioned into subgraphs $G_1,\dots,G_t$ with the following properties
\begin{itemize}
    \item\label{partition max deg} $\Delta(G_i)\le (1-3\alpha/4)\frac{\Delta(G)}{t-2}$ for each $i\in [t]$. 
    \item Denote $R_i=V(G)\sm (W\cup V(G_i))$. Then $|R_i|\geq n/2t$. 
    \item Every vertex $v\in V(G)\setminus W$ satisfies $e_G(v,R_i)\geq d(v)/200t$
\end{itemize}
\end{lemma}

\begin{proof}
We first record that $d_{W}(v)\leq 1$ for every $v\in V(G)\setminus W$. Indeed, $v$ has at most one $B$-neighbour (else a length-$2$ path connecting vertices in $B$ contradicts~\Cref{prop:bad vertices}~item~2), at most one $S$-neighbour (similarly), and at most one $N(S)$-neighbour (two such $w_{1},w_{2}$ with distinct $S$-witnesses give a length-$4$ path within $S$; with the same witness $s$, the $4$-cycle $w_{1}\text{-}v\text{-}w_{2}\text{-}s\text{-}w_{1}$ contains $s\in S$, contradicting the cycle clause). Moreover, $v$ cannot have neighbours in two of the sets $B,S,N(S)$: an $S$- and a $B$-neighbour give a length-$2$ path between $B$ and $S$; an $N(S)$- and a $B$-neighbour give a length-$3$ path between $B$ and $S$; and an $S$-neighbour together with an $N(S)$-neighbour would force $v\in N(S)\subseteq W$. So $d_{W}(v)\leq 1$.

Partition $V(G)$ randomly into sets $R_1',\dots,R_t'$, where each vertex is assigned a part independently and uniformly at random. Define $R_i:=R_i'\setminus W$.
Colour every edge of $G[R_i']$ uniformly at random with one of the colours in $[t]\setminus\{i\}$. For every pair $1\le i<j\le t$, colour each edge of $G[R_i',R_j']$ uniformly at random with an element of $[t]\setminus\{i,j\}$. Denote by $G_i$ the graph on $V(G)\setminus (W\cup R_i)$ consisting of all edges of colour $i$ (with both endpoints in this vertex set).

Recalling that $|W|=O(\sqrt{n}\log n)$ by \Cref{prop:bad vertices}, by Chernoff we have that the size event $\mathcal{A}:=\{|R_i|\geq n/(2t)\text{ for every }i\}$ holds with probability $$1-O(t)\cdot e^{-\Theta(n)}=1-e^{-\Theta(n)}.$$
For each $v\in V(G)\setminus W$ and each $i\in[t]$, let $A(v,i)$ be the event that $e_G(v,R_i)< d_G(v)/(200t)$ and $B(v,i)$ the event that $d_{G_i}(v)>(1-3\alpha/4)\Delta(G)/(t-2)$. The conditional distribution of $e_G(v,R_i)$ given $v\notin R_i$ is $\mathrm{Bin}(d_G(v)-d_W(v),1/t)$ (since neighbours in $W$ are removed from $R_i$), with mean at least $(d_G(v)-1)/t\geq d_G(v)/(2t)$ for $n$ large (using $d_G(v)\geq(\log\log n)^{4}\to\infty$). By Chernoff,
$\Pr[A(v,i)]\leq e^{-\Omega(d_G(v)/t)}\leq e^{-\Omega((\log\log n)^{4}/t)}=o(1/\Delta(G)^{2})$ and similarly $\Pr[B(v,i)]\leq e^{-\Omega(\Delta(G)/t)}=o(1/\Delta(G)^{2})$ (using that the expected degree of every vertex is at most $(1-\alpha)\Delta(G)/t$). Each $A(v,i)$ and $B(v,i)$ depends only on the random choices at vertices of $N_{G}(v)\cup\{v\}$ and on the edges incident to $v$, hence is mutually independent of all but $O(t^{2}\Delta(G)^{2})=O(\log^{2}n)$ other such events. With $2nt$ such events in total and each occurring with probability $o(1/\Delta(G)^{2})= o(1/\log^{2}n)$, the symmetric Lov\'asz Local Lemma applies and gives
\[
\Pr[\mathcal{B}]\geq \exp\bigl(-2\cdot 2nt\cdot o(1/\log^{2}n)\bigr)=\exp(-o(n)),
\]
where $\mathcal{B}:=\{\text{no }A(v,i)\text{ or }B(v,i)\text{ holds}\}$. Since $\Pr[\mathcal{A}^{c}]\leq e^{-\Theta(n)}$, we have $\Pr[\mathcal{A}\cap\mathcal{B}]\geq \Pr[\mathcal{B}]-\Pr[\mathcal{A}^{c}]\geq \exp(-o(n))-e^{-\Theta(n)}>0$, so there is a partition meeting all three properties simultaneously.
\end{proof}

\noindent For each $i\in[t]$, applying~\Cref{thm:approxLAC} to $G_i$ gives a collection of at most $(1+\varepsilon) \frac{\Delta(G_i)}{2}$ linear forests covering~$G_i$, for any fixed constant $\varepsilon>0$ (we choose $\varepsilon\ll \alpha$ so that the bound below holds).

\begin{defin}\label{def:F_M}
For each $i \in [t]$, let $K:=\left\lceil\left(1 - \frac{2\alpha}{3}\right) \frac{\Delta(G)}{2(t-2)}\right\rceil$ and apply \Cref{thm:approxLAC} to $G_i$ (with $\varepsilon\ll \alpha$) to obtain a collection $\mathcal{F}_{G_i}$ of at most $K$ linear forests covering $G_i$; we pad with empty forests (containing no edges) if needed so that $|\mathcal{F}_{G_i}|=K$ exactly. We denote the union of these collections by $\cF_M$ and call it the \emph{Middle collection}. By our choice of $t$, we have $t(1-2\alpha/3)/(t-2)\leq 1-\alpha/2$, and thus
\[
|\cF_M|=t\cdot K\leq t\cdot\left(\left(1-\frac{2\alpha}{3}\right)\frac{\Delta(G)}{2(t-2)}+1\right)= \frac{t(1-2\alpha/3)}{t-2}\cdot \frac{\Delta(G)}{2}+t\leq \left(1-\frac{\alpha}{2}\right)\lceil\Delta(G)/2\rceil
\]
for $n$ large, since $\Delta(G)=\omega(t)$ absorbs the additive $t$. We may therefore assume that $\cF_M$ has precisely $M:=\left\lceil\left(1 - \frac{\alpha}{2}\right) \lceil\Delta(G)/2\rceil\right\rceil$ linear forests by adding (if needed) padding forests to $\cF_M$; each padding forest consists of a single edge of $G-W$, chosen to lie in some specific $G_i$ (so each padding forest is associated with a well-defined unique index $i\in[t]$). We choose these padding edges to be distinct so that every edge of \(G-W\)
  appears in at most two forests of \(\cF_M\).

Such padding edges exist whp, since we need only \(O(\Delta(G))=O(\log n)\)
  of them while \(G-W\) has \(\Omega(n\log n)\) edges. Each padding forest is
  the two-vertex graph whose edge set consists of one chosen padding edge.

\end{defin}

\subsection{Handling $S$ and $B$}

We start by covering the edges touching $B$ with linear forests.
\begin{lemma}
  Let \(k:=\lceil\Delta(G)/2\rceil\). There is a collection
  \(\mathcal F_B=\{F_1^B,\ldots,F_k^B\}\) of linear forests such that:
  \begin{enumerate}
  \item every edge incident to \(B\) is contained in at least one and at most
  two
  forests \(F_i^B\);
  \item for every \(b\in B\) and every \(i\in[k]\), we have
  \(d_{F_i^B}(b)=2\);
  \item all endpoints of the forests \(F_i^B\) lie outside \(B\).
  \end{enumerate}
\end{lemma}

\begin{proof}
  By~\Cref{prop:bad vertices}~Item~2, the edges incident to \(B\) form disjoint
  stars. Let \(k=\lceil\Delta(G)/2\rceil\). For each \(b\in B\), distribute the
  edges of its star among \(F_1^B,\ldots,F_k^B\) so that each forest gets two
  edges at \(b\) allowing an edge
  to be used twice if needed, but not more than twice; this is possible since \(k\le d_G(b)\le 2k\). Then each \(F_i^B\) is a disjoint union of
  cherries
  centred in \(B\), and the required properties are immediate.
  \end{proof}

Now we turn to edges touching $S$. Recall that those are small degree vertices, and the edges touching them necessarily must be contained in many Hamilton cycles on average in the final cover. Our goal here is to cover the edges touching $S\cup N(S)$ with linear forests, but so that edges between $N(S)$ and $N^2(S)$ are not used many times. 

\begin{lemma}
  Whp, for every \(v\in S\), there is a collection
  \(\mathcal F_v=\{F_{v,1},\ldots,F_{v,k}\}\), where
  \(k=\lceil\Delta(G)/2\rceil\), of linear forests contained in $G$ such that:
  \begin{enumerate}
  \item every edge in $G$ incident to a vertex of \(\{v\}\cup N(v)\) is contained in at
  least one forest \(F_{v,j}\);
  \item every edge in $G$ not incident to \(v\) is contained in at most \(100\) forests
  \(F_{v,j}\);
  \item  for every \(j\), every vertex of \(\{v\}\cup N(v)\) has degree \(2\) in
  \(F_{v,j}\);
  
  \item for every $j$ every vertex of
  \(V(F_{v,j})\setminus(\{v\}\cup N(v))\) has degree \(1\) in $F_{v,j}$ and lies in
  \(N^2(v)\).

  \end{enumerate}

\end{lemma}


\begin{proof}
We consider two regimes.

\textit{Case I: $np\leq 1.01\log n$.}
  By~\Cref{prop:bad vertices} item~3, every $x\in N(v)$ satisfies
  \[
  \log n/10\leq d(x)\leq 1.1\log n.
  \]
  Also, by~\Cref{lem:maxdegree}, whp
  \[
  1.2\log n\leq k:=\lceil\Delta(G)/2\rceil\leq 2\log n
  \]
  for $n$ large. Since $\delta(G)\geq 2$, write
  \[
  N(v)=\{x_1,\ldots,x_r\},
  \qquad 2\leq r=d(v)<(\log\log n)^4.
  \]
  For each $i\in[r]$, put $L_i:=N(x_i)\setminus\{v\}$. By
  \Cref{prop:bad vertices} item~2, the sets $L_i$ are pairwise disjoint subsets
  of $N^2(v)$: an edge inside $N(v)$ would give a triangle through $v$, and two
  vertices of $N(v)$ with a common neighbour other than $v$ would give a
  $4$-cycle through $v$.

  For each $j\in[k]$, we first choose the two neighbours of $v$ which will lie
  on the path through \(v\). Let
  \[
  A_j :=
  \begin{cases}
  \{x_1,x_2\}, & j\notin \{3,\ldots,r\},\\
  \{x_1,x_j\}, & 3\leq j\leq r.
  \end{cases}
  \]
  Thus the forests with $j=3,\ldots,r$ are responsible for covering the edges
  $vx_3,\ldots,vx_r$, while all other forests use the edge-pair $vx_1,vx_2$.

  Now form \(F_j\) as follows. If \(A_j=\{x_a,x_b\}\), put into \(F_j\) a path
  \[
  \ell_{a,j}\,x_a\,v\,x_b\,\ell_{b,j},
  \]
  where \(\ell_{a,j}\in L_a\) and \(\ell_{b,j}\in L_b\). For every
  \(x_i\in N(v)\setminus A_j\), put into \(F_j\) a cherry
  \[
  \ell_{i,j}\,x_i\,\ell'_{i,j},
  \]
  with distinct \(\ell_{i,j},\ell'_{i,j}\in L_i\).

  Notice that we still did not exactly specify the vertices $\ell_{*,*}$, but merely the set $L_*$ they belong to.
  It remains to choose these leaf-edges. For a fixed vertex \(x_i\), count the
  number \(s_i\) of slots across $F_j$'s requiring an edge from \(x_i\) to \(L_i\). From the
  definition of the sets \(A_j\),
  \[
  s_1=k,\qquad s_2=k+r-2,\qquad s_i=2(k-1)+1 \quad\text{for } i\geq 3.
  \]
  Let \(q_i:=|L_i|=d(x_i)-1\). Then
  \[
  \log n/10-1\leq q_i\leq 1.1\log n,
  \]
  while, for \(n\) large,
  \[
  q_i\leq s_i\leq 5\log n\leq 100q_i.
  \]
  Enumerate the edges from \(x_i\) to \(L_i\) cyclically and assign them to the
  \(s_i\) slots in the cyclic order, making the two slots of any cherry consecutive.
  Then every edge from \(x_i\) to \(L_i\) is used at least once and at most
  \(100\) times, and every cherry has two distinct leaves.

  Since the sets \(L_i\) are pairwise disjoint and lie in \(N^2(v)\), each
  \(F_j\) is a vertex-disjoint union of one path through \(v\) and cherries
  centred at the remaining vertices of \(N(v)\). Hence each \(F_j\) is a linear
  forest whose endpoints lie in \(N^2(v)\), and it uses no vertices outside
  \(\{v\}\cup N(v)\cup N^2(v)\). Moreover, every vertex of \(\{v\}\cup N(v)\) has degree \(2\) in every
  \(F_j\),
  and every vertex outside \(\{v\}\cup N(v)\) used by \(F_j\) has degree \(1\)
  and lies in \(N^2(v)\).

  Finally, every edge incident to \(v\) is covered: \(vx_1\) appears in every
  forest, \(vx_2\) appears in all forests with \(j\notin\{3,\ldots,r\}\), and
  each \(vx_i\) with \(i\geq 3\) appears in \(F_i\). All edges incident to
  \(N(v)\) but not to \(v\) are the edges from some \(x_i\) to \(L_i\), and
  these
  are covered by the cyclic assignment above, each with multiplicity at most
  \(100\). This completes Case I.

  \textit{Case II: $np\geq 1.01\log n$.}
In this regime we have conditioned (at the start of the proof of \Cref{thm:sparse-range}) on the whp event that no vertex of $G$ has degree less than $(\log\log n)^{4}$; hence $S=\emptyset$, and the conclusion is vacuous. This completes the proof.
 \end{proof}

Since we know that the vertices in $S$ are pairwise at distance at least $5$, the above lemma implies the following.

 \begin{cor}
  There exists a collection
  \(\mathcal F_S=\{F_1^S,\ldots,F_k^S\}\), where
  \(k=\lceil\Delta(G)/2\rceil\), of linear forests such that:
  \begin{enumerate}
  \item every edge incident to \(S\cup N(S)\) is contained in at least one
  forest
  \(F_i^S\);
  \item every edge between \(N(S)\) and \(N^2(S)\) is contained in at most
  \(100\)
  forests \(F_i^S\);
  \item every vertex of \(S\cup N(S)\) has degree \(2\) in every \(F_i^S\);
  \item every vertex of \(V(F_i^S)\setminus(S\cup N(S))\) has degree \(1\) and
  lies in \(N^2(S)\).
  \end{enumerate}
  \end{cor}


\begin{remark}
By~\Cref{prop:bad vertices}, $B$ and $S$ have no path of length at most $3$ between them, which implies $(B\cup N(B))\cap (S\cup N(S)\cup N^{2}(S))=\emptyset$, so $\cF_B$ and $\cF_S$ are vertex disjoint. Both contain precisely $\lceil \Delta/2\rceil$ linear forests, so we can arbitrarily pair up the forests in those collections to obtain a collection $\cF_{E}$ of $\lceil\Delta/2\rceil$ linear forests (the $E$ in the index stands for \emph{extremal}, as we cover the vertices of smallest and highest degrees).
\end{remark}

\subsection{Joining $\cF_M$ and $\cF_E$}


\noindent We will now merge $\cF_M$ with $\mathcal{F}_E$  in order to create one collection $\mathcal{F}$ of $ \lceil\Delta(G)/2\rceil$ linear forests covering the edges of $G$, such that each endpoint in each forest has a large neighbourhood outside of that forest in~$G$. In the end, we will transform each of these linear forests into a Hamilton cycle using Lemma \ref{lem:linforestextension}, thus completing the proof. The construction of $\mathcal{F}$ proceeds as follows. Recall the notation $W=B\cup S\cup N(S)$.

First, recall that by Definition~\ref{def:F_M}, we have $|\cF_M| = M:=\lceil(1-\alpha/2) \lceil\Delta(G)/2\rceil\rceil$. Take a subcollection $\mathcal{F}_E' \subseteq \mathcal{F}_E$ of size $M$, and fix a bijection $\phi:\cF_M\to\cF_E'$.
 For each pair $(F_M,F_E)$ where $F_E=\phi(F_M)$, we can define the linear forest $\hat{F} := (F_M \cup F_E) \setminus Q$, where $Q$ is the set of edges in $F_M$ which are incident to vertices also incident to some edge of $F_E$. We now add all these new linear forests $\hat{F}$ to $\mathcal{F}$, so that currently, $|\mathcal{F}| = M$. Let \( G' \subseteq G \) be the subgraph of \( G \) consisting of the edges in \( G - W\) that are not part of any linear forest \( \hat{F} \in \mathcal{F} \); specifically, these are the edges belonging to some set \( Q \) as defined earlier. Then, the following properties hold.

\begin{claim}
$\Delta(G') \leq 500$.
\end{claim}

\begin{proof}
First, $d_{G'}(v)=0$ for all $v\in W$, as the edges in $Q$ come from forests in $\cF_M$ with vertices in $G-W$. So fix $x\in V(G)\setminus W$.
If an edge $xy$ belongs to $G'$, then there is a pair $(F_M,F_E)$ with $xy \in Q\cap F_M$. There are two reasons $xy$ can be in $Q$: either $y$ is incident to an edge of $F_E$, or $x$ is.

The former type appears at most once for a fixed $x\in V(G)\setminus W$. To see this, note that $y$ lies in $N(W)\setminus W=N(B)\cup N^{2}(S)$ (since edges of $F_E$ incident to $V(G)\setminus W$ have their $V(G)\setminus W$ endpoint in $N(W)\setminus W$). We have $d_G(x, N(B)\cup N^{2}(S))\leq 1$ by \Cref{rem:bad vertices low degree}. This graph edge may occur in two forests of
  \(\cF_M\), because of the padding forests (see \Cref{def:F_M}), and so we count it with
  multiplicity at most two.

For the latter type, $x$ being incident to an edge of some $F_E\in\cF_E$ means $x\in N(W)\setminus W = N(B)\cup N^2(S)$, and $x$'s neighbour along that edge lies in $W$; again $d_G(x,W)\leq 1$ gives at most one such edge of~$G$, and since each edge appears in at most $100$ forests of $\cF_E$, $x$ lies in at most $100$ such forests. For each such forest, at most $2$ edges incident to $x$ in $F_M$ can land in $Q$. Together this gives $d_{G'}(x)\leq 1 + 100\cdot 2 = 201$, so $\Delta(G')\le 500$, with room to spare.
\end{proof}

\noindent Let $G'_i := G' \cap G_i$ for each $1 \leq i \leq t$ with $G_i$'s from \Cref{lem:linear forests with reservoirs}. We will now add linear forests to $\mathcal{F}$ which cover the edges in $G'$, as well as the edges used in $\mathcal{F}_E'' := \mathcal{F}_E \setminus \mathcal{F}_E'$. We do this so that at the end we have $|\mathcal{F}| \leq  \lceil\Delta(G)/2\rceil$ as desired.
Let us partition $\mathcal{F}_E''$ arbitrarily into $t$ collections $\mathcal{F}_E''(1), \ldots, \mathcal{F}_E''(t)$ each of size at least $\lfloor|\mathcal{F}_E''|/t\rfloor\geq \lfloor(\lceil\Delta(G)/2\rceil-M)/t\rfloor$, which exceeds $4\cdot 100\cdot \max\{1,\Delta(G')\}+1$ (since $\lceil\Delta/2\rceil-M\geq (\alpha/2)\lceil\Delta/2\rceil-O(1)$, $t=t(C)$ is constant, and $\Delta(G')\leq 500$). For each $i \leq t$, we apply \Cref{lem:merginglinfor} with $H_2:= G'_i$ and the collection $\cF:=\cF_E''(i)$, and with $H_1$ defined as the subgraph of $G$ consisting of the edges of $\cF_E''(i)$.

Every vertex in $v\in V(H_2)\subseteq V(G)\setminus W$ has at most one neighbour in $H_1\subset W\cup N_G(W)$ by \Cref{rem:bad vertices low degree}. Furthermore, trivially each $v\in V(H_2)$ has degree at most $\Delta(G')\leq 500$ in $H_2$ by Claim~1.

By the construction of \(\cF_S\) and \(\cF_B\), every edge of \(H_1\) incident
  to \(V(H_2)\) appears in at most \(\ell:=100\) forests of
  \(\cF_E''(i)\): such an edge is either a \(B\)-\(N(B)\) edge, used at most
  twice, or an \(N(S)\)-\(N^2(S)\) edge, used at most \(100\) times.
 Setting $d:=\max\{1,\Delta(G')\}$, the hypothesis $|\cF_E''(i)|\geq 4d\ell+1$ holds for $n$ large. So Lemma \ref{lem:merginglinfor} finds a collection $\mathcal{F}(i)$ of $|\mathcal{F}_E'' (i)|$ linear forests covering the edges of $G'_i$ and those used in $\mathcal{F}_E'' (i)$. We add all the collections $\mathcal{F}(i)$ to $\mathcal{F}$; then $|\mathcal{F}| = M+|\cF_E''|\leq \lceil\Delta(G)/2\rceil$ and the linear forests in $\mathcal{F}$ cover all edges of $G$: the $\hat F$'s cover $\cF_E'$ together with all $\cF_M$-edges except those in $G'$, while the $\cF(i)$'s cover $G'$ together with $\cF_E''$. 

\begin{lemma}\label{lem:forests are good}
    With $K:=1/(3t)$, every $F \in \mathcal{F}$ satisfies the conditions of \Cref{lem:linforestextension}: $|V(G) \setminus V(F)|\geq Kn$, and $|N_G(v) \setminus V(F)| \ge 100(\log\log n)^3$ for every $v \in V(G)\setminus W$ (which includes every endpoint of a path in~$F$).
\end{lemma}

\begin{proof}
Set $K:=1/(3t)$, a positive constant depending on $C$ (since $t=t(C)$). First, recall that each $F \in \mathcal{F}$ is such that $F[V(G) \setminus W] \subseteq G_i$ for some $i\in[t]$ (the padding forests in $\cF_M$ were chosen to be single edges of $G-W$, each of which lies in a unique $G_i$ that we record; the $\hat F$'s inherit the index from their $F_M$ component; the $\cF(i)$'s use index $i$). So $R_i\setminus (N(B)\cup N^2(S)) \subseteq V(G) \setminus V(F)$. Further, by \Cref{lem:linear forests with reservoirs} we have $|R_i|\geq n/(2t)$, by \Cref{prop:bad vertices} we have $|B|,|S|\leq \sqrt{n}$, and by \Cref{lem:maxdegree} we have $\Delta(G)=O(\log n)$. Combining these,
\[
|V(G) \setminus V(F)| \;\geq\; |R_i\setminus (N(B)\cup N^2(S))| \;\geq\; n/(2t)-O(\log^{2} n)(|B|+|S|) \;\geq\; n/(2t)-o(n) \;\geq\; Kn,
\]
using $K=1/(3t)<1/(2t)$. The hypothesis $|V(G)\setminus V(F)|\geq Kn$ of \Cref{lem:linforestextension} therefore holds.
For the second part, fix $v\in V(G)\setminus W$ (every endpoint of a path of $F$ lies in $V(G)\setminus W$: by construction the endpoints of $\hat F$ and of $\cF(i)$-forests lie there, since vertices of $W$ are internal in every $\cF_E$-forest and $\cF_M$-forests are contained in $G-W$). By \Cref{rem:bad vertices low degree}, for every $v\notin W$:
\[
e_G\bigl(v,R_i\setminus (N(B)\cup N^{2}(S))\bigr)\;\geq\; e_G(v,R_i)-1\;\geq\; d(v)/(200t)-1.
\]
Since $v\notin S$ we have $d(v)\geq(\log\log n)^{4}$, and $t=t(C)$ is constant, so for $n$ sufficiently large $d(v)/(200t)-1\geq 100(\log\log n)^{3}$.
Therefore $|N_G(v)\setminus V(F)|\geq e_G(v,R_i\setminus (N(B)\cup N^{2}(S)))\geq 100(\log\log n)^{3}$.
\end{proof}

\subsection{Extending forests in $\mathcal{F}$ into Hamilton cycles}

\noindent To finish the proof, we want to extend the collection $\mathcal{F}$ of at most $\lceil\Delta(G)/2\rceil$ linear forests covering the edges of $G$ into a Hamilton cover. Each $F\in\mathcal F$ is nonempty: the padding forests are single edges of $G-W$, each $\hat F$ contains all of its associated $F_E$-edges (which are nonempty as $F_E$ pairs an $\cF_B$-cherry with an $\cF_S$-component), and each $\cF(i)$-forest contains the edges of some $F\in\cF_E''(i)$. We may further assume each $F\in\mathcal F$ has no isolated vertices by removing such vertices from $V(F)$; this changes $S:=V(G)\setminus V(F)$ only by enlarging it, so the hypotheses of~\Cref{lem:forests are good} are preserved. Then \Cref{lem:forests are good} shows that each $F\in \mathcal{F}$ satisfies the conditions of \Cref{lem:linforestextension}, so we can apply this lemma to extend each linear forest to a Hamilton cycle in $G$. This gives a covering of $G$ with $\lceil \Delta(G)/2 \rceil$ Hamilton cycles, as required.
\end{proof}

\section{The very dense range}\label{sec:very-dense}

  In this section we close the remaining gap near \(p=1\). We write
  \(q=1-p\). Recall that $q\le n^{-1/8}$, and $qn^2\rightarrow\infty$;

  We shall use the following Hamilton decomposition theorem
  for dense regular graphs due to K\"uhn and Osthus~\cite{kuhn2014hamilton}.

  \begin{theorem}\label{thm:dense-ham-decomp}
  For every \(\gamma>0\) there exists \(n_0\) such that every even-regular\footnote{A graph is even-regular if it is $d$-regular for an even integer $d$.}
  graph \(J\) on \(n\ge n_0\) vertices with
  $
  \delta(J)\ge (1/2+\gamma)n
  $
  has a Hamilton decomposition.
  \end{theorem}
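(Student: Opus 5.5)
This is the Hamilton decomposition theorem of K\"uhn and Osthus~\cite{kuhn2014hamilton}, and in the paper we would simply invoke it. If we had to justify it, the plan would be to reduce it to their main theorem on regular robust expanders and then recall how that theorem is proved. The reduction step comes first. Let \(J\) be \(r\)-regular with \(r\ge(1/2+\gamma)n\). We show that \(J\) is a robust \((\nu,\tau)\)-outexpander for suitable \(0<\nu\ll\tau\ll\gamma\), viewing \(J\) as a digraph with both orientations of every edge.

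The robust-expansion check is a short counting argument. Take \(X\) with \(\tau n\le |X|\le(1-\tau)n\) and let \(RN_\nu(X)\) be the set of vertices with at least \(\nu n\) neighbours in \(X\). If \(|X|\le n/2\), fix any \(x\in X\); each of its at least \((1/2+\gamma)n\) neighbours is adjacent to \(x\), and double counting edges between \(X\) and \(V\) gives \(|RN_\nu(X)|\ge |X|+\nu n\). If \(|X|>n/2\), then every vertex has at least \(\gamma n\) neighbours in \(X\), since its degree exceeds \(n-|X|+\gamma n\). Hence \(RN_\nu(X)=V(J)\), which has size at least \(|X|+\tau n\).

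Regularity of degree \(r\ge(1/2+\gamma)n\ge \eta n\) is given, and \(r\) is even. The Kühn--Osthus theorem then states that every \(r\)-regular robust outexpander with \(r\ge\eta n\) has a Hamilton decomposition. Applying it to \(J\), each Hamilton cycle of the decomposition is undirected, and the theorem follows.

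For the robust-expander theorem itself we would follow the K\"uhn--Osthus strategy in three steps:
\begin{enumerate}
\item \textbf{Absorbing structure.} Set aside a sparse, highly structured subgraph \(H\), the ``robustly decomposable'' graph built from chord absorbers and parity-extended cycle switchers on a regularity-lemma partition. This \(H\) has the property that for every sufficiently sparse regular leftover graph \(L\) with \(V(L)=V(H)\), the union \(H\cup L\) has a Hamilton decomposition.
\item \textbf{Approximate decomposition.} The remainder \(J-H\) is still a dense regular robust expander, up to a small perturbation. Decompose almost all of it into Hamilton cycles using an approximate decomposition lemma (random partition plus matching and path-cover arguments in the reduced graph), so that the uncovered part \(L\) is regular and has maximum degree at most \(\xi n\) for tiny \(\xi\).
\item \textbf{Absorption.} Feed \(L\) into \(H\) to complete the decomposition.
\end{enumerate}

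The main obstacle is the first step: constructing the robust decomposition structure so that it can absorb an \emph{arbitrary} sparse regular leftover. This is the genuinely deep part of~\cite{kuhn2014hamilton}, and no short argument replaces it. For this reason, in the present paper we would cite the theorem rather than reprove it.
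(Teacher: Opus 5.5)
Citing K\"uhn--Osthus is exactly what the paper does; it gives no proof of its own. So at the level of the citation you agree with the paper.

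The justification you sketch, however, fails where you pass from the directed theorem back to \(J\). The symmetric digraph with both orientations of every edge of \(J\) is \(r\)-regular, so its Hamilton decomposition consists of \(r\) directed Hamilton cycles. Their underlying undirected cycles use every edge of \(J\) exactly twice, once in each direction. What you obtain is therefore a Hamilton decomposition of the doubled multigraph \(2J\), i.e.\ a Hamilton cover of \(J\) by \(r\) cycles. It is not a decomposition of \(J\) into \(r/2\) edge-disjoint Hamilton cycles. A telltale sign is that the evenness of \(r\) is never used. The same argument would decompose an odd-regular graph into Hamilton cycles, which is impossible because each Hamilton cycle contributes exactly \(2\) to every degree.

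The missing ingredient is an orientation of \(J\) itself, with no pair of opposite arcs, that satisfies two conditions:
\begin{enumerate}
\item every vertex has in- and out-degree exactly \(r/2\), which is where \(r\) even is needed;
\item it is still a robust outexpander.
\end{enumerate}
One way to get it is to start from a uniformly random orientation. This is whp a robust \((\nu/4,\tau)\)-outexpander: Chernoff applied to the number of arcs from \(S\) into any set \(T\) of \(\Theta(\nu n)\) vertices of \(RN_\nu(S)\) gives failure probability \(e^{-\Omega(\nu^2 n^2)}\), which beats the \(4^n\) choices of \((S,T)\). Its in- and out-degrees are \(r/2\pm o(n)\). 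You can then reverse \(o(n)\) arcs at each vertex, along short directed paths from surplus to deficit vertices, to balance all degrees without destroying robust expansion.

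A Hamilton decomposition of this oriented graph then yields \(r/2\) edge-disjoint undirected Hamilton cycles of \(J\). Alternatively, cite directly the undirected analogue of the K\"uhn--Osthus theorem, which is stated for \(r\)-regular robust \((\nu,\tau)\)-expanders with \(r\) even. Your double-counting verification of robust expansion, with \(\nu\) small relative to \(\tau\) and \(\gamma\), is correct and is exactly what that version requires.
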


It is enough to prove the following.
  \begin{theorem}\label{thm:very-dense-range}
  Let \(\eps>0\), and let \(q=q(n)\) satisfy
 $
  q\le n^{-\eps}
  $ and $
  qn^2\to\infty .
  $
  Then whp \(G(n,1-q)\) has a tight Hamilton cover.
  \end{theorem}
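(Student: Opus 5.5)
Write $G\sim G(n,1-q)$ and let $\bar G$ be its complement, so $\bar G\sim G(n,q)$. The plan is to construct a regular spanning subgraph of $G$ of suitable even degree, Hamilton-decompose it with \Cref{thm:dense-ham-decomp}, and cover the remaining edges with a few extra Hamilton cycles. Set $\Delta:=\Delta(G)=n-1-\delta(\bar G)$ and $k:=\lceil \Delta/2\rceil$. Since $q\le n^{-\eps}$, the complement $\bar G$ has maximum degree $O(qn+\log n)$, and this is $o(n)$. Two regimes will be treated separately.

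\textbf{Regime $qn\ll\log n$ (including $qn\to 0$).} Here $\delta(\bar G)=0$ whp, so $\Delta=n-1$ and $k=\lceil (n-1)/2\rceil$. The idea is to start from a Walecki-type Hamilton decomposition of $K_n$ (for $n$ odd), or a decomposition of $K_n$ into $(n-2)/2$ Hamilton cycles plus a perfect matching (for $n$ even), and then repair each cycle that uses non-edges. Whp $\bar G$ is a forest of small components with $O(qn^2)$ edges, and the condition $qn^2\to\infty$ guarantees $e(\bar G)\ge 1$. Even so, it is cleaner to use the general argument below, which also covers this regime. For $n$ odd the count is exact, namely $k$ cycles for $n(n-1)/2$ edges, so the slack has to come from the non-edges. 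One can see this directly: a cover with exactly $(n-1)/2$ cycles must be a decomposition whenever $\bar G=\emptyset$, and this is exactly why the hypothesis $qn^2\to\infty$ is needed.

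\textbf{General construction.} Choose an even $d$ slightly below $\delta(G)$, say $d=\delta(G)-\Theta(\sqrt{qn\log n}+\log n)$ and even. Find a $d$-regular spanning subgraph $J\subseteq G$ that is chosen to contain as many edges at the vertices of maximum degree as possible. Then apply \Cref{thm:dense-ham-decomp} to obtain $d/2$ edge-disjoint Hamilton cycles. The leftover graph $H:=G-E(J)$ has $\Delta(H)=\Delta-d$ and is sparse with good expansion-type properties inherited from randomness. It must be covered by $k-d/2=\lceil(\Delta-d)/2\rceil$ Hamilton cycles of $G$, and these may reuse edges of $J$. To do this, decompose $H$ into $\lceil \Delta(H)/2\rceil$ linear forests, each with few components, using a degree-fixing argument adapted to $H$: vertices of degree $\Delta(H)$ in $H$ get exactly two edges in every forest, and since $H$ is nearly regular around its max-degree vertices, a Petersen/Euler-tour splitting applies. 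Then close each linear forest into a Hamilton cycle of $G$ using dense edges. Because $G$ has minimum degree $n-o(n)$, any linear forest with at most $n/10$ components and maximum degree $2$ can be extended to a Hamilton cycle by a greedy Pósa/Dirac-type connection argument, so no reservoir subtleties arise.

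\textbf{Main obstacle: the exact linear-forest decomposition of $H$ into $\lceil\Delta(H)/2\rceil$ forests.} This is the linear arboricity problem with no $+1$ slack. I would handle it by making $H$ itself have controlled structure: choose $J$ via a flow/$f$-factor argument, which is possible by Tutte's $f$-factor theorem in this dense setting. The target is that in $H$ the set of vertices of degree $\Delta(H)$ is small and the vertices in it are pairwise far apart in $H$. Whp only few vertices attain the minimum degree of $\bar G$, and they are spread out, so this should be achievable. I would then add an auxiliary perfect-matching-type padding, borrowing edges of $J$, to make $H$ even-regular of degree $2\lceil\Delta(H)/2\rceil$. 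By Petersen's theorem this regular graph decomposes into $2$-factors. Finally I would break each $2$-factor into a linear forest by deleting one edge per cycle, and route the deleted edges into other forests via \Cref{lem:merginglinfor}. For small $\Delta(H)$ that lemma lacks slack, so I would instead delete edges whose endpoints avoid max-degree vertices. In the extreme case $\Delta(H)\le 2$, I expect to argue directly by hand.
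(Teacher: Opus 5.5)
There is a genuine gap, and it sits exactly at the step you flag as the main obstacle. Covering $H=G-E(J)$ by $\lceil\Delta(H)/2\rceil$ linear forests is never established, and the route you sketch cannot work.

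\begin{itemize}
\item Since $J$ is $d$-regular, $d_H(v)=d_G(v)-d$ for every $v$. So no choice of $J$ (via $f$-factors or otherwise) changes which vertices have maximum degree in $H$: they are exactly the minimum-degree vertices of $\bar G$. When $qn=O(1)$ these are the isolated vertices of $\bar G$, a constant fraction of $V(G)$, so they cannot be made few and far apart.
\item After padding to an even-regular graph and $2$-factorising, every vertex has degree $2$ in every factor. \Cref{lem:merginglinfor} needs both ends of an edge to be isolated in some receiving forest, so it has nowhere to put the deleted edges. The only escape would be for every cycle of every factor to contain a padding edge, and Petersen's theorem does not provide that.
\item With your choice $d=\delta(G)-\Theta(\sqrt{qn\log n}+\log n)$, the plan fails outright in part of the allowed range. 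Take $n$ odd and $qn^2=o(\log n)$. Whp $\Delta(G)=n-1$, so $2k=n-1$. Partition $H$ into linear forests $L_1,\dots,L_m$ with $m=k-d/2=\Omega(\log n)$, and close each to a Hamilton cycle $C_j$, which adds at least one edge to each. Then
\[
m\le\sum_{j}|E(C_j)\setminus E(L_j)|=mn-e(H)=kn-e(G)=\binom n2-e(G)=e(\bar G)=o(\log n),
\]
a contradiction.
\end{itemize}

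So your general construction does not cover the sparse-complement regime, contrary to what you claim. The case $\Delta(H)\le 2$ that you defer to a hand argument never arises for this $d$.

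The missing idea is not to fix the leftover in advance, but to let a few extra cycles absorb all the slack as repeated edges. The paper proceeds as follows.
\begin{itemize}
\item It adds to $G$ a multigraph $R$ of copies of edges of $G$ with $d_R(v)=2k-d_G(v)$. This comes from a perfect matching in an auxiliary stub graph, whose minimum degree exceeds half its order because $(\Delta(\bar G)+1)(\Delta(\bar G)-\delta(\bar G)+1)=o\bigl(\sum_v d_R(v)\bigr)$.
\item It properly colours $R$ with $r\le\frac32\Delta(R)=o(n)$ matchings $M_1,\dots,M_r$.
\item For each $i$, it uses P\'osa's theorem to find a Hamilton cycle $C_i$ through $M_i$, and deletes the remaining edges of $C_i$ from the current graph.
\item Each vertex $v$ loses exactly $2r-d_R(v)$ edges, so what remains is $(2k-2r)$-regular with minimum degree $n-o(n)$. \Cref{thm:dense-ham-decomp} then decomposes it into $k-r$ Hamilton cycles.
\end{itemize}
No exact linear-arboricity statement is ever needed.
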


  We first regularise \(G\) by adding extra copies of edges already present in
  \(G\).

  \begin{lemma}\label{lem:very-dense-regularisation}
  Let \(G\sim G(n,1-q)\), where \(q\le n^{-\eps}\) and \(qn^2\to\infty\).
  Set
  $
  k:=\left\lceil \frac{\Delta(G)}2\right\rceil .
  $
  Then whp there exists a multigraph \(R\), with every edge of \(R\) lying in
  \(E(G)\), such that
  \[
  d_R(v)=2k-d_G(v)
  \qquad\text{for every }v\in V(G).
  \]
  Moreover, \(\Delta(R)=o(n)\).
  \end{lemma}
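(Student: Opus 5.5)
Write $\bar G$ for the complement of $G$, so $\bar G\sim G(n,q)$ and $d_G(v)=n-1-d_{\bar G}(v)$. We need a multigraph $R$ on edges of $G$ with prescribed degrees $f(v):=2k-d_G(v)=d_{\bar G}(v)-(n-1-2k)$. Since $2k\in\{\Delta(G),\Delta(G)+1\}$ and $\Delta(G)=n-1-\delta(\bar G)$, we get $f(v)=d_{\bar G}(v)-\delta(\bar G)+\eta$ with $\eta\in\{0,1\}$. In particular $f(v)\ge 0$ and $\sum_v f(v)=2kn-2e(G)$ is even. Standard Chernoff estimates for $\bar G$ give $\Delta(\bar G)\le \max(2qn, C\log n)$. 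Since $q\le n^{-\eps}$, this yields $\max_v f(v)\le \Delta(\bar G)+1=o(n)$, and the bound $\Delta(R)=o(n)$ is then automatic.

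So the task is to realise a degree sequence $f$ with small entries and even sum as a multigraph supported on $G$, which is an almost complete graph. I would build $R$ greedily, pairing up half-edges. Each vertex $v$ starts with $f(v)$ free stubs. Repeatedly I pick a vertex $u$ with the most remaining stubs and join it to some other vertex $w$ with remaining stubs, requiring $uw\in E(G)$. Multiple edges are allowed, so the only constraint is that $w\neq u$ and $uw\notin \bar G$.

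The first obstacle is that stubs might remain only at $u$ itself, or only at $\bar G$-neighbours of $u$. To avoid this I would use a bipartite-free matching view: the set $T$ of vertices with positive remaining demand. If $|T|\ge \Delta(\bar G)+2$, then $u$ always has a $G$-neighbour in $T$, and the greedy step proceeds. When $T$ becomes small, I fix it using an auxiliary vertex $z$ outside $T$. Adding the edges $uz$ and $wz$ (both present whp, after choosing $z$ adjacent in $G$ to both) raises $z$'s degree by 2, which breaks the exact prescription. So instead I handle the endgame by \emph{swapping}. If the leftover stubs sit at $u,u'$ with $uu'\in\bar G$, or two stubs sit at the same $u$, I take an existing $R$-edge $xy$ with $ux,u'y\in E(G)$, delete it, and add $ux$ and $u'y$. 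If both leftover stubs are at $u$, I add $ux$ and $uy$. Degrees of $x,y$ are preserved and $u,u'$ gain the required amount. Such an edge $xy$ exists as long as $R$ has many edges, since only $O(\Delta(\bar G))$ vertices are excluded.

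The degenerate case is when $R$ is nearly empty, i.e. $\sum f$ is tiny. This occurs when $qn$ is bounded: the regime $qn^2\to\infty$ allows $\bar G$ to have only $\omega(1)$ edges, as a matching plus small structure, and then $f$ is mostly $\eta$. I expect this sparse-complement regime to be the main obstacle. I would treat it directly. Whp $\bar G$ has at least two edges and is a disjoint union of small trees (no short cycles, max degree $O(1)$). The number of vertices of positive demand is then large: with $\delta(\bar G)=0$, these are all vertices of $\bar G$-degree $\ge 1$ (or all vertices if $\eta=1$), of which there are at least about $2$ and growing since $e(\bar G)\to\infty$. Pairing them by a near-perfect matching in $G$ restricted to this set is possible since $G$ is complete apart from a forest. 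Any leftover even demand at a single vertex can be absorbed by the swap trick using the already-placed edges, or by a double edge routed through a path of length 2. In all cases the resulting $R$ uses only edges of $G$, and each degree of $R$ is at most $\Delta(\bar G)+O(1)=o(n)$.
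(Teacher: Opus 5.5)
\textbf{There is a genuine gap.} Your greedy-plus-swap scheme is essentially a hands-on version of the paper's argument. The paper puts $f(v)$ stubs at each vertex, joins two stubs when they sit at distinct $G$-adjacent vertices, and gets a perfect matching of this stub graph from Dirac's theorem. However, your proposal is missing the one quantitative estimate on which both arguments rest.

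\emph{The missing estimate.} In a swap for leftover stubs at $u,u'$, the unusable $R$-edges are those meeting $X:=\{u,u'\}\cup N_{\bar G}(u)\cup N_{\bar G}(u')$. This set has only $O(\Delta(\bar G))$ vertices, as you say. But each of them may carry up to $\max_v f(v)$ edges of $R$, so up to $2(\Delta(\bar G)+1)\max_v f(v)$ edges are excluded. Moreover, the greedy phase can stall with up to $(\Delta(\bar G)+1)\max_v f(v)$ stubs left, not just two, so the swaps must be iterated. Hence you need
\[
\sum_v f(v)\;\ge\; c_0\,\bigl(\Delta(\bar G)+1\bigr)\bigl(\Delta(\bar G)-\delta(\bar G)+1\bigr)
\]
for a suitable absolute constant $c_0$ (about $5$ suffices). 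This is precisely the paper's inequality (1). You never state or prove it, and the phrase ``as long as $R$ has many edges'' does not supply it.

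\emph{The difficulty is in the other regime.} When $qn=O(1)$ and $qn^2\to\infty$, the inequality is easy. There $\delta(\bar G)=0$, so $\sum_v f(v)\ge 2e(\bar G)$, and one checks that this dominates $(\Delta(\bar G)+1)^2$. For instance, $\Delta(\bar G)=1$ while $e(\bar G)\to\infty$ when $qn^2$ grows slowly. So your general scheme already works there, and the separate ``degenerate case'' is unnecessary.

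As written, that separate treatment is also inaccurate. For $qn=c>0$, the graph $\bar G$ has maximum degree of order $\log n/\log\log n$, and for $c>1$ it has a giant component, so it is not a union of small trees of bounded degree. Also, demands $f(v)=d_{\bar G}(v)+\eta$ may exceed $1$, so a near-perfect matching does not realise them.

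The genuinely delicate case is $qn\gg\log n$. There $(\Delta(\bar G)+1)(\Delta(\bar G)-\delta(\bar G)+1)=O(qn\sqrt{qn\log n})$. To beat this you need the lower-tail fact $\delta(\bar G)\le qn-c\sqrt{qn\log n}$, which gives $\sum_v f(v)\ge 2e(\bar G)-n\delta(\bar G)=\Omega(n\sqrt{qn\log n})$. The ratio is then of order $1/q$, so this is exactly where the hypothesis $q\le n^{-\eps}$ (that is, $q=o(1)$) is needed. In your write-up that hypothesis enters only through $\Delta(R)=o(n)$, which is a symptom of the omission. Once the inequality is established, your iterated swaps, each of which increases $e(R)$ by one, do complete the construction.
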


  \begin{proof}
  Let \(H:=\overline G\), and write \(\delta:=\delta(H)\) and
  \(\Delta:=\Delta(H)\). Since
  $
  \Delta(G)=n-1-\delta,
  $
  we have
  \[
  2k=n-1-\delta+\xi
  \]
  for some \(\xi\in\{0,1\}\). Thus the required degree in \(R\) is
  \[
  b(v):=2k-d_G(v)=d_H(v)-\delta+\xi .
  \]
  Let
  $
  B:=\sum_{v\in V(G)} b(v).
  $
  Notice that \(B\) is even, since
  $
  B=2kn-2e(G).
  $
  We shall use the following standard estimates for \(H\sim G(n,q)\). Since
  \(q=o(1)\) and \(qn^2\to\infty\), whp
  $
  \Delta=o(n)
  $
  and
  \[
  (\Delta+1)(\Delta-\delta+1)=o(B). \tag{1}
  \]
  Indeed, if \(nq\le 2\log n\), then whp \(B=\Omega(e(H))\) and
  \(\Delta^2=o(e(H))\). If \(nq>2\log n\), then the usual estimates for the
  extreme degrees of \(G(n,q)\) whp give
  $
  \Delta-\delta=O(\sqrt{nq\log n})
  $
  and
  $
  B\ge c n\sqrt{nq\log n}
$
  for some absolute constant \(c>0\), while
  \(\Delta=O(nq)\). Hence
  \[
  (\Delta+1)(\Delta-\delta+1)
  =O(nq\sqrt{nq\log n})
  =o(n\sqrt{nq\log n})
  =o(B),
  \]
  as \(q=o(1)\).

  Now create \(b(v)\) labelled stubs at each vertex \(v\). Define an auxiliary
  graph \(\Gamma\) on these \(B\) stubs by joining two stubs if they belong to
  distinct vertices \(u,v\) with \(uv\in E(G)\). A perfect matching in
  \(\Gamma\) gives precisely the desired multigraph \(R\).

  A stub at \(v\) is non-adjacent only to the other stubs at \(v\), and to stubs
  at vertices \(u\in N_H(v)\). Therefore its degree in \(\Gamma\) is at least
  \[
  B-b(v)-\sum_{u\in N_H(v)} b(u)
  \ge B-(\Delta+1)(\Delta-\delta+1)
  > B/2
  \]
  by (1). Hence \(\Gamma\) has a perfect matching, for instance by Dirac's
  theorem. This matching defines \(R\). Finally, whp we have
  \[
  \Delta(R)=\max_v b(v)\le \Delta-\delta+1=o(n).
  \]
  \end{proof}

  In particular, for $R$ given in the previous lemma, we know that $G+R$ is a $2k$-regular multigraph.
  We also need the following fact which allows us to complete matchings to Hamilton cycles in very dense graphs, and which follows for example from Theorem~1 in~\cite{posa1963circuits}

  \begin{lemma}\label{lem:prescribed-matching-cycle}
  Let \(J\) be a graph on \(n\) vertices with
  $
  \delta(J)\ge n-s,
  $
  where \(s=o(n)\). Let \(M\) be a matching on \(V(J)\). Then, for \(n\) large, there is a Hamilton
  cycle in $J\cup M$
  containing every edge of \(M\).
  \end{lemma}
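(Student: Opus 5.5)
The plan is to contract every edge of \(M\) to a single vertex, in such a way that any Hamilton cycle of the contracted graph expands back to a Hamilton cycle of \(J\cup M\) through all of \(M\). The contracted graph will still be nearly complete, so Dirac's theorem will supply the cycle.

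\emph{Building the auxiliary graph.} Let \(M=\{u_1v_1,\dots,u_mv_m\}\) and let \(U\) be the set of vertices not covered by \(M\). Define an auxiliary simple graph \(J'\) on the vertex set \(U\cup\{w_1,\dots,w_m\}\), where \(w_i\) represents the matching edge \(u_iv_i\). Since \(m\le n/2\), we have \(n':=|V(J')|=n-m\ge n/2\). The adjacencies are as follows.
\begin{itemize}
\item Two vertices \(x,y\in U\) are adjacent in \(J'\) if \(xy\in E(J)\).
\item A vertex \(x\in U\) is adjacent to \(w_i\) if \(x\) is adjacent in \(J\) to both \(u_i\) and \(v_i\).
\item Two contracted vertices \(w_i,w_j\) are adjacent if all four pairs between \(\{u_i,v_i\}\) and \(\{u_j,v_j\}\) are edges of \(J\).
\end{itemize}
Requiring these ``all pairs'' conditions means we never need to track orientations.

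\emph{Degree bound.} Each original vertex has at most \(s-1\) non-neighbours in \(J\), and each \(J'\)-vertex corresponds to a set of at most \(2\) original vertices. So a \(J'\)-vertex fails to be adjacent to another \(J'\)-vertex only if one of at most \(2(s-1)\) non-edges runs between their corresponding sets. Each such non-edge rules out at most one other \(J'\)-vertex, since the sets are disjoint. Hence
\[
\delta(J')\ge n'-1-2s\ge n'/2
\]
for \(n\) large, using \(s=o(n)\) and \(n'\ge n/2\). Dirac's theorem then gives a Hamilton cycle \(C'\) in \(J'\); this is valid because \(n'\ge n/2\ge 3\).

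\emph{Expansion.} Replace each \(w_i\) on \(C'\) by the edge \(u_iv_i\), traversed as follows. Let \(a\) and \(b\) be the predecessor and successor of \(w_i\) on \(C'\); these may themselves be contracted vertices, in which case we take their already chosen exit or entry endpoints. Enter \(w_i\) at \(u_i\) from \(a\) and leave from \(v_i\) to \(b\). By the definition of \(J'\), every original vertex represented by \(a\) is \(J\)-adjacent to both \(u_i\) and \(v_i\), and likewise for \(b\). So all the connecting edges lie in \(J\), whichever endpoints are chosen. The result is a cycle through every vertex of \(V(J)\) that uses each edge of \(M\) together with edges of \(J\). Edges of \(M\) not lying in \(J\) are allowed, since the cycle lives in \(J\cup M\).

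The only point needing care is this orientation and compatibility issue when two matching edges are consecutive on \(C'\). The ``all four pairs'' adjacency rule handles it at a cost of only a constant factor in the missing degree, which is negligible because \(s=o(n)\). There is therefore no real obstacle beyond checking the degree count above.
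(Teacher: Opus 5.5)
Your proof is correct, but it takes a different route from the paper. The paper gives no argument of its own. It states that the lemma follows from Theorem~1 of P\'osa's 1963 paper on circuits of finite graphs, which produces a Hamilton cycle through any prescribed linear forest with $q$ edges under an Ore-type condition of the form $d(u)+d(v)\ge n+q$ for non-adjacent $u,v$. Applied to $J\cup M$ with $q=|M|\le n/2$, this condition holds because $2(n-s)\ge 3n/2$ for large $n$.

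Your contraction argument replaces that citation with a self-contained reduction to Dirac's theorem. Requiring all cross pairs between blocks to be edges of $J$ removes the orientation issue entirely. The cost is at most doubling the number of non-neighbours per contracted vertex. This is harmless because $s=o(n)$ and the contracted graph still has $n-|M|\ge n/2$ vertices. You also correctly allow edges of $M$ to lie outside $J$, which is exactly how the lemma is used in the very dense range.

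Each approach has its advantage. Yours is elementary: it needs only Dirac's theorem, and the construction is transparently algorithmic. P\'osa's theorem is more general: it needs only roughly $\delta\ge (n+|M|)/2$ rather than $n-o(n)$, and it handles arbitrary prescribed linear forests, not just matchings. In the paper's application, where $\delta(J_{i-1})=n-o(n)$, either suffices.
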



Now we are ready for the proof of the main result of this section. 
  \begin{proof}[Proof of \Cref{thm:very-dense-range}]
  Let \(G\sim G(n,1-q)\), and let \(R\) be the multigraph supplied by
  \Cref{lem:very-dense-regularisation}. By Shannon's theorem, the edges of \(R\)
  can be properly coloured with
  $
  r\le \frac32\Delta(R)=o(n)
  $
  colours. Thus
  $
  E(R)=M_1\cup\cdots\cup M_r,
  $
  where each \(M_i\) is a matching in $G$.

  We cover the edges of the regular multigraph $G+R$, one Hamilton cycle at a time: at step $i$, the Hamilton cycle contains the matching $M_i$. Start with
  \(J_0:=G\). Suppose \(J_{i-1}\) has been defined. Since
  \[
  \delta(J_{i-1})\ge \delta(G)-2r
  \ge n-1-\Delta(\overline G)-2r
  = n-o(n),
  \]
  \Cref{lem:prescribed-matching-cycle} gives a Hamilton cycle \(C_i\) containing
  all edges of \(M_i\), with all other edges lying in \(J_{i-1}\). We regard the
  edges of \(M_i\) as extra edges coming from \(R\), and delete from \(J_{i-
  1}\)
  only the remaining edges of \(C_i\). Let the resulting graph be \(J_i\).

  After all \(r\) steps, for every vertex \(v\),
  \[
  d_{J_r}(v)
  =d_G(v)-2r+d_R(v)
  =d_G(v)-2r+2k-d_G(v)
  =2k-2r.
  \]
  Thus \(J_r\) is an even-regular simple graph. Moreover,
  $
  2k-2r=n-o(n),
  $
  so for \(n\) large we have \(\delta(J_r)\ge 2n/3\). By
  \Cref{thm:dense-ham-decomp}, \(J_r\) decomposes into \(k-r\) Hamilton cycles.

  Together with \(C_1,\ldots,C_r\), these cycles form a collection of exactly
  \(k=\lceil\Delta(G)/2\rceil\) Hamilton cycles. They cover every edge of \(G\):
  the decomposition of \(J_r\) covers all base edges not used during the peeling
  process, while the cycles \(C_i\) cover the remaining base edges and use the
  edges of \(R\) only as repeated copies. Hence \(G\) has a tight Hamilton
  cover.
  \end{proof}

  \begin{remark}
  The condition \(qn^2\to\infty\) is the natural endpoint for a whp statement.
  If \(q=c/n^2\) and \(n\) is odd, then with probability bounded away from zero
  the complement of \(G(n,1-q)\) consists of exactly one edge, say \(xy\). Then
  \(G=K_n-xy\) has no tight Hamilton cover. Indeed,
  \(k=\lceil\Delta(G)/2\rceil=(n-1)/2\), so \(k\) Hamilton cycles altogether have only one
  more edge-occurrence than \(|E(G)|\). But the two vertices \(x,y\) both have
  odd degree in \(G\), and each would need a repeated incident edge; since
  \(xy\notin E(G)\), these repeated edges must be distinct, a contradiction.
  \end{remark}

\section{Linear arboricity}\label{sec:linear-arboricity}

  Recall that \(\la(G)\) denotes the minimum number of linear forests whose
  union covers \(E(G)\).

  \begin{proof}[Proof of \Cref{thm:random-linear-arboricity}]
  We split into ranges of \(p\).

  First suppose \(p\geq 2\log n/n\) and \(1-p\gg n^{-2}\). In this range the
  Hamilton-cover result gives the desired conclusion by the standard deduction
  from Hamilton covers to linear arboricity; see, for instance, the proof of the
  linear-arboricity corollary in~\cite{GKO:16}. 

  Next suppose \(1-p=O(n^{-2})\). Then whp \(\overline G\) has $o(n)$ edges,
  and in particular \(G\) has a vertex of degree \(n-1\). Hence
  \(\Delta(G)=n-1\). Furthermore, by the standard Walecki construction
\cite{alspach2008walecki} the linear arboricity of \(K_n\)
is \(\lceil n/2\rceil\), meaning that \(K_n\) decomposes into \(\lceil n/2\rceil\) linear forests. Intersecting these forests with \(E(G)\) gives
  \(\lceil n/2\rceil=\lceil(\Delta(G)+1)/2\rceil\) linear forests covering
  \(G\).

  Now we consider \(c/n\leq p\leq 2\log n/n\) for any fixed $c>0$. Fix a sufficiently
  small constant \(\alpha>0\), put \(k:=\lceil\Delta(G)/2\rceil\), and let
  \(B\) be the set of vertices of degree at least \((1-\alpha)\Delta(G)\).
  By a high-degree separation result, similar to \Cref{prop:bad vertices}, whp \(B\) is independent, the stars from \(B\) are vertex-disjoint,
  \(N(B)\) is independent and each $v\notin B$ has at most one neighbour in $N(B)$ (indeed, such a result follows from a simple first moment argument). We condition on these properties.

  We first cover the edges incident to \(B\). For each \(b\in B\), enumerate
  \(N(b)=\{x_1^{(b)},\ldots,x_{d(b)}^{(b)}\}\). For \(j\in[k]\), take 
  the cherry at \(b\) using the two edges \(bx_{2j-1}^{(b)}\) and \(bx_{2j}^{(b)}\), with
  indices read cyclically modulo \(d(b)\). Taking the union over all \(b\in B\)
  gives linear forests \(F_1^B,\ldots,F_k^B\). These forests cover every edge
  incident to \(B\), and every such edge is used at most twice.

  Let \(G_0\) be obtained from \(G\) by deleting all edges incident to \(B\).
  Since
  every vertex outside \(B\) has degree less than \((1-\alpha)\Delta(G)\), we
  have
  \(\Delta(G_0)\leq (1-\alpha)\Delta(G)\). Applying the approximate linear
  arboricity theorem (\Cref{thm:approxLAC}) with a small enough error term, we decompose \(G_0\) into
  \(m\leq (1-\alpha/3)k\) linear forests \(A_1,\ldots,A_m\).

  For each \(i\in[m]\), pair \(A_i\) with \(F_i^B\). Let \(Q_i\) be the set of
  edges of \(A_i\) incident to an endpoint of \(F_i^B\), and define
  \(\widehat F_i:=F_i^B\cup(A_i\setminus Q_i)\). This is a linear forest: after
  removing \(Q_i\), no endpoint of \(F_i^B\) is incident to an edge of \(A_i\).

  It remains to cover \(Q:=\bigcup_{i\leq m}Q_i\), using the unused forests
  \(F_{m+1}^B,\ldots,F_k^B\). We first note that \(\Delta(Q)\leq 4\). Indeed, a
  vertex \(x\in N(B)\) is an endpoint in at most two forests \(F_i^B\), because
  the unique edge from \(x\) to \(B\) is used at most twice; in each
  corresponding
  forest \(A_i\), at most two edges incident to \(x\) are deleted. Thus
  \(d_Q(x)\leq 4\). Every vertex outside \(N(B)\) has at most one neighbour in
  \(N(B)\), by the same short path/cycle exclusion, and hence has degree at most
  one in \(Q\).

 We now apply \Cref{lem:merginglinfor}. Let
  \[
  \mathcal F_{\mathrm{rem}}:=\{F_{m+1}^B,\ldots,F_k^B\},
  \qquad
  H_1:=\bigcup_{F\in\mathcal F_{\mathrm{rem}}}F,
  \qquad
  H_2:=Q.
  \]
  We apply the lemma with \(\mathcal F=\mathcal F_{\mathrm{rem}}\),
  \(d=4\), and \(\ell=2\).

  Let us verify its hypotheses. The collection \(\mathcal F_{\mathrm{rem}}\)
  covers \(E(H_1)\) by definition, and every edge of \(H_1\) appears in at most two
  forests of \(\mathcal F_{\mathrm{rem}}\), since every edge incident to \(B\)
  appears in at most two forests \(F_j^B\). Also, by the preceding paragraph,
  \(\Delta(H_2)=\Delta(Q)\leq 4\).

  It remains to check the degree condition for \(H_1\) at vertices of
  \(V(H_2)\).
  If \(x\in V(Q)\cap N(B)\), then \(x\) has a unique neighbour in \(B\), because
  the stars from \(B\) are vertex-disjoint. Hence \(d_{H_1}(x)\leq 1\). If
  \(x\in V(Q)\setminus N(B)\), then \(x\notin B\) and \(x\) is incident to no
  edge of \(H_1\), since \(H_1\) consists only of edges between \(B\) and
  \(N(B)\). Thus \(d_{H_1}(x)\leq 1\) for every \(x\in V(H_2)\). Therefore every
  vertex of \(V(H_2)\) has degree at most \(4\) in both \(H_1\) and \(H_2\).

  Finally,
  $
  |\mathcal F_{\mathrm{rem}}|=k-m\geq \alpha k/3\to\infty,
  $
  so for \(n\) large
  $
  |\mathcal F_{\mathrm{rem}}|\geq 4d\ell+1=33.
  $
  Thus \Cref{lem:merginglinfor} gives \(k-m\) linear forests covering
  \(H_1\cup Q\). Together with
  \(\widehat F_1,\ldots,\widehat F_m\), this gives \(k\) linear forests covering
  all edges of \(G\).

For the remaining regime, suppose \(p=o(1/n)\). Then whp \(G\) is a forest. Greedily edge-colour \(G\) using colours
\([\,\Delta(G)\,]\). Pair the colour classes as
\(\{1,2\},\{3,4\},\ldots\). The union of two colour classes
has maximum degree at most two and is acyclic, hence is a linear forest. Thus \(G\) decomposes into at most \(\lceil\Delta(G)/2\rceil\) linear
forests.
\end{proof}
  %




\section{Concluding remarks}\label{sec:concluding}

We close by explaining why the sparse-range proof also gives
\Cref{thm:hitting-time-cover}. The only extra point is that
\(\tau_2\) is a random time, while the estimates in \Cref{sec:sparse-range} are
stated for a fixed binomial random graph. 

Let \(\xi=\xi(n)\to\infty\) grow sufficiently slowly, and put
\(m_{\pm}=\frac n2(\log n+\log\log n\pm \xi)\). By the standard hitting-time
theorem for minimum degree two, whp \(m_-\le \tau_2\le m_+\). We claim that,
whp, every graph \(G_m\) with \(m_-\le m\le m_+\) satisfies all deterministic
hypotheses used in the proof of \Cref{thm:sparse-range}, except possibly the
condition \(\delta(G_m)\ge2\).

Indeed, throughout this interval the edge density is \(\Theta(\log n/n)\). The
estimates used in \Cref{sec:sparse-range} are Chernoff-type degree estimates,
expansion and joining estimates, and first-moment bounds excluding short local
configurations around exceptional vertices. The same estimates hold uniformly
in the random graph process. To see this, put \(G^-=G_{m_-}\) and
\(G^+=G_{m_+}\). Since \(G^-\subseteq G_m\subseteq G^+\) for every
\(m\in[m_-,m_+]\), monotone lower-bound properties are inherited from \(G^-\),
and monotone upper-bound or exclusion properties are checked in \(G^+\). For
the non-monotone exceptional sets, we control fixed supersets: the low-degree
set at any time is contained in the low-degree set of \(G^-\), while the
high-degree set at any time is contained, after slightly weakening constants,
in a high-degree set defined using \(G^+\). The usual first-moment estimates
show that these fixed sets are small and well separated in \(G^+\). Thus the
deterministic hypotheses of the sparse construction hold simultaneously
throughout the window.

Now consider \(G_{\tau_2}\). On the high-probability event addressed above,
\(G_{\tau_2}\) satisfies all deterministic hypotheses of the sparse construction,
and the remaining hypothesis \(\delta(G_{\tau_2})\ge2\) holds by definition of
\(\tau_2\). Therefore the construction from \Cref{sec:sparse-range} applies
verbatim to \(G_{\tau_2}\): it produces exactly
\(\lceil\Delta(G_{\tau_2})/2\rceil\) linear forests covering all edges of
\(G_{\tau_2}\), and then extends them to Hamilton cycles using the reserved
random structure. This gives a tight Hamilton cover of \(G_{\tau_2}\) whp. 

\vspace{0.5 cm}

\textbf{Acknowledgment.} The authors used ChatGPT for language polishing, literature-search assistance, and informal discussion of minor auxiliary details. The main results and proofs are due to the authors.

\bibliographystyle{plain}

\appendix

\end{document}